\newtheorem{theorem}{Theorem}
\newtheorem{definition}[theorem]{Definition}
\newtheorem{lemma}[theorem]{Lemma}
\newtheorem{notation}[theorem]{Notation}
\newtheorem{proposition}[theorem]{Proposition}
\newtheorem{example}[theorem]{Example}
\newtheorem{remark}[theorem]{Remark}
\newtheorem{corollary}[theorem]{Corollary}
\newtheorem*{theorem*}{Theorem}
\DeclareMathOperator{\Sing}{Sing}
\DeclareMathOperator{\Crit}{Crit}
\DeclareMathOperator{\Fitt}{Fitt}
\DeclareMathOperator{\Hess}{Hess}
\title{Invariants of Non-Isolated Singularities of Hypersurfaces}
\author{Yotam Svoray}
\date{    }
\begin{document}

\maketitle

\begin{abstract}
        In this paper we generalize some results by Siersma, Pellikaan, and de Jong regarding morsifications of singular hypersurfaces whose singular locus is a smooth curve, and present some applications to the study of Yomdin-type isolated singularities. In order to prove these results, we discuss the transversal discriminant of such singularities and how it relates to other algebraic and topological invariants.
\end{abstract}

  \tableofcontents

\section{Introduction}

Understanding the singular points of a singular hypersurface allows us to better understand the geometry and the topology of the hypersurface. An important tool used to understand such singular points is to deform the hypersurface and see how such singular points behave under "special" deformations. \\  

The goal of this paper is to study singular hypersurfaces $(V(f),0) \subset (\mathbb{C}^n,0)$ such that $\Sing(V(f))=V(I)$ is a smooth curve germ, $f \in I^p \setminus I^{p+1}$ for some $p\geq 2$, and its generic transversal type is an ordinary multiple point. Specifically, how such singular hypersurfaces behave under a special kind of deformation, and what bounds we can conclude on some topological and algebraic invariant of $V(f)$.  We summarize the results we prove in the following theorem, which is a combination of Theorem~\ref{prop:morsif}, Theorem~\ref{thm:main_Jac_gen}, Theorem~\ref{thm:delta_trans_bound}, and Corollary~\ref{cor:mu_rel_mors}. \\

\begin{theorem*}
    Let $f \in I^p \setminus I^{p+1}$ be a germ of analytic function (where $p\geq 2$) with $\Sing(V(f))=V(I)$ such that its generic transversal type is an ordinary multiple point.  Then $f$ has a relative morsification $f_t$ (see Definition~\ref{def:morsification}) and for every large enough $k$ and small enough $t_0$ we have that:
    \begin{enumerate}
        \item If $n \neq 4$ then 
        \begin{equation*}
    j(f) \geq \# A_1(f_{t_0}) +\deg(\Delta^\perp(f)),
        \end{equation*}
          where $\# A_1(f_{t_0})$ denoted the number of Morse points $f_{t_0}$ has outside $V(I)$ (see Notation~\ref{notation:white_mountain}), $j(f)$ is defined in Definition~\ref{def:for_a _moment}, and $\deg(\Delta^\perp(f))$ is the degree of the transversal discriminant of $f$ as a Cartier divisor (see Definition~\ref{def:trans_disc}). 
    \item $\delta(f) \geq \deg(\Delta^\perp(f))$, where $\delta(f)$ is defined in Section~\ref{sec:milnor}.  
    
    \item The Milnor number of $f+x_n^k$ is finite and bounded below by 
    \begin{equation*}
            \#A_1(f_{{t_0}}) + (k-1)(p-1)^{n-1} + 2\deg(\Delta^\perp(f)). 
    \end{equation*}

    \end{enumerate}
\end{theorem*}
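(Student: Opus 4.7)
The statement bundles four results that I would prove in sequence. I would begin by constructing the relative morsification $f_t$: working in the vector space of deformations preserving $V(I)$ as a singular set to first order, a Bertini-type genericity argument should show that for generic small $t$ the only singularities of $f_t$ outside $V(I)$ are Morse (accounting for the $\#A_1(f_{t_0})$ points), while the transversal type along $V(I)$ remains the generic ordinary $p$-fold point except at the finitely many points recorded by $\Delta^\perp(f)$. The hypotheses $f \in I^p \setminus I^{p+1}$ and generic transversal type an ordinary multiple point are precisely what guarantee that this deformation space is rich enough for transversality to apply.

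For part (1), I would establish conservation (or at least the right direction of semi-continuity) of $j$ under the relative morsification, then localise $j(f_{t_0})$ to a sum over the critical scheme. The Morse points off $V(I)$ each contribute $1$, and the contribution along $V(I)$ is controlled stalk-by-stalk by the Fitting/Jacobian data, whose total deviation from the generic transversal value is exactly $\deg(\Delta^\perp(f))$. The restriction $n \neq 4$ most likely enters when equating a Fitting-ideal length with the transversal discriminant via a codimension count that collapses in the middle dimension. Part (2) is handled analogously: $\delta$ is interpreted as the length of a coherent sheaf on the critical locus, and each transversal discriminant point contributes at least its multiplicity, giving the bound $\delta(f) \geq \deg(\Delta^\perp(f))$.

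For part (3), I would note that after a generic linear change of coordinates $V(I) \cap V(x_n^k) = \{0\}$, so $f + x_n^k$ has an isolated singularity at the origin for every $k$ large enough. Conservation of Milnor number then equates $\mu_0(f + x_n^k)$ with the sum of local Milnor numbers of $f_{t_0} + x_n^k$ over the critical points in a small ball. The Morse points of $f_{t_0}$ persist as $A_1$ critical points and contribute $\#A_1(f_{t_0})$. The contribution concentrated along the smooth part of $V(I)$ is computed by the classical Yomdin formula, yielding $(k-1)(p-1)^{n-1}$ as the product of the suspension factor $k-1$ with the generic transversal Milnor number $(p-1)^{n-1}$. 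Finally, each transversal discriminant point contributes a local excess over this generic value; a careful local analysis (likely via a secondary perturbation of the transversal slice) shows this excess is at least twice the multiplicity of $\Delta^\perp$ there, yielding the remaining $2\deg(\Delta^\perp(f))$.

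The most delicate step is the last one: producing the factor of $2$ rather than $1$ at each transversal discriminant point requires a sharp local Milnor-number inequality for $f + x_n^k$ at non-generic transversal singularities, presumably by splitting each such point into two persistent $A_1$ contributions. The $n \neq 4$ exclusion in part (1) is a second subtlety that must be isolated and handled directly, and the bookkeeping between the algebraic invariants $j$ and $\delta$ and the geometric count along $\Delta^\perp(f)$ is what ties the four results together into the single statement above.
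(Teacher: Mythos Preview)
Your overall architecture for parts (1) and (2)---conservation of a length-type invariant under the relative morsification, followed by localisation and a lower bound of $1$ at each point of $\Delta^\perp$---is the same as the paper's. However, two points deserve correction.

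First, the role of $n\neq 4$. You guess that it enters via ``a codimension count that collapses in the middle dimension.'' It does not. The paper needs the implication: if the local Jacobian algebra at a point of $V(I)$ is Cohen--Macaulay, then $\Delta^\perp$ is empty there. This is proved by observing that Cohen--Macaulayness forces the Milnor number of the transversal slice $f(x_1,\dots,x_{n-1},z_0)$ to be constant in $z_0$, and then invoking the L\^e--Ramanujam theorem to conclude topological equisingularity of the transversal family. L\^e--Ramanujam is still open in ambient dimension three, i.e.\ for the $(n-1)$-variable slices when $n-1=3$. That is the sole origin of the exclusion $n\neq 4$, and without it you have no mechanism to force $j(\mathfrak{a},p,t_0)\geq 1$ at discriminant points. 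Relatedly, the conservation step is not automatic: the paper proves that the relevant module $\mathfrak{a}/Jac(f_t)$ is Cohen--Macaulay of dimension one (via an Artin--Nagata--Huneke linkage argument), and this is what makes the length add up across the fibre. Your sketch omits this.

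Second, your route to part (3) is genuinely different from the paper's and makes your life harder. You propose to deform $f+x_n^k$ directly, use conservation of $\mu$, and then argue locally that each point of $\Delta^\perp(f_{t_0})$ contributes at least $2$ to the Milnor number of $f_{t_0}+x_n^k$. Producing that factor of $2$ by a local analysis is exactly the ``delicate step'' you flag, and the paper avoids it entirely. Instead, the paper quotes Pellikaan's algebraic decomposition
\[
\mu(f+x_n^k)=j(f)+(k-1)(p-1)^{n-1}+\delta(f)
\]
valid for all large $k$, and then simply adds the inequalities from parts (1) and (2): $j(f)\geq \#A_1(f_{t_0})+\deg(\Delta^\perp(f))$ and $\delta(f)\geq \deg(\Delta^\perp(f))$. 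The coefficient $2$ thus appears for the trivial reason that $\Delta^\perp$ is counted once in $j$ and once in $\delta$. Your direct approach may be salvageable, but as written it leaves the hardest local computation unjustified, whereas the paper's route reduces part (3) to a one-line corollary.
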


We now describe some historical background and some motivation for this paper. The study of deformations of singular hypersurfaces started with the study of singular hypersurface whose singular locus is a single point (which we may assume is at the origin). Such singular hypersurfaces are said to have isolated singularities.
Much is known about isolated singularities and about their topology and geometry, as discussed and reviewed in~\cite{arnol1993singularity,de2013local,greuel2007introduction}.\\

An important result regarding isolated singularities is the relationship between the Milnor number of an analytic germ $f \in \mathbb{C}\{ x_1, \dots, x_n \}$ (which equals to $\mu(f)=\dim_\mathbb{C} (\sfrac{\mathbb{C} \{x_1, \dots, x_n\}}{Jac(f)})$) and the number of Morse points in a morsification of $f$, which is a special kind of deformation of $f$ (for more details see Section 3.8 in~\cite{ebeling2007functions}). Therefore, a natural question to ask is how to generalize this relationship to hypersurfaces $V(f)$ whose singular locus is not isolated. \\

Siersma~\cite{siersma1983isolated} studied the Milnor fiber of $f$ where $\Sing(V(f))=V(I)$ such that the germ of $f$ at $(0, \dots, 0,x_n)$ is equivalent to an $A_\infty$ singularity for every small $x_n \neq 0$.  Siersma proved that the Milnor fiber of $f$ is homotopy equivalent to a bouquet of spheres $S^{n-1}$. In order to compute the number of spheres in this bouquet, Siersma studied deformations of such hypersurfaces into hypersurfaces with only $A_1$, $A_\infty$, and $D_\infty$ singularities. Siersma conjectured that the number of $A_1$ points plus the number of $D_\infty$ points in such a deformation,  that is, $\# A_1(f) + \#D_\infty(f)$, does not depend on the deformation and can be expressed as an algebraic invariant of $f$.   \\

Pellikaan~\cite{pellikaanhypersurface, pellikaan1990deformations}, generalized Siersma's ideas regarding isolated line singularities to the case where the singular locus of $V(f)$ is a reduced curve defined by some ideal $I$. Using techniques from commutative and homological algebra developed in~\cite{pellikaan1988projective}, together with analytic and topological tools, Pellikaan was able to prove Siersma's conjecture: $ \# A_1(f) +\# D_\infty(f) = \dim_\mathbb{C} (\sfrac{I}{Jac(f)})$. In addition, Pellikaan~\cite{pellikaan1989series} was able apply this result in order to compute the Milnor number of a related isolated singularities, inspired by the work of Yomdin~\cite{iomdin1974complex} and later L\^{e}~\cite{le1980}.  Pellikaan's work on Siersma's conjecture was later expanded upon by de Jong~\cite{de1988some} to the case where the transversal type of the singular locus is determined by some fixed simple ($ADE$) isolated singularity. \\

Therefore, inspired by Siersma, de Jong, and Pellikaan, we wish to generalize the formula $ \# A_1(f) +\# D_\infty(f) = \dim_\mathbb{C} (\sfrac{I}{Jac(f)})$ (which we call the Siersma-Pellikaan-de Jong formula) and other related results to the non reduced case, i.e., where the singular locus is a non-reduced scheme. Yet, an obstacle that we face in this case is that the transversal type of $V(f)$ is not a simple singularity, but rather a multiple point. So, in order to understand the transversal type of our singular locus, we need to look at a special Cartier Divisor of $\Sing(V(f))$ called the tranversal discriminant of $V(f)$, denoted $\Delta^\perp(f)$, as described by Kerner and N\'{e}methi in~\cite{kerner2017discriminant, kazarian2013discriminant}, which we discuss in Section~\ref{section:trans_disc}. \\ 

In this paper, by studying a special kind of deformations of $f$, which we call a relative morsification, as constructed in Section~\ref{section:rel_mors}, we are able to obtain a generalization of the the Siersma-Pellikaan-de Jong formula, as presented in the theorem above. In addition, as in Pellikann~\cite{pellikaan1989series}, we show how we can use the tools used to prove this generalizatiom in order to compute and bound the Milnor number of isolated singularities of the form $f+x_n^k$. In Section~\ref{section:closing} we see why our results are indeed generalizations and discuss what are the challenges in the non reduced case.\\

Throughout the text, unless stated otherwise, we assume that $f \in I^p \setminus I^{p+1}$ for some $p\geq 2$ with $\Sing(V(f))=V(I)$ such that its generic transversal type is an ordinary multiple point. Note that since $V(I)$ is a smooth curve germ, we can assume that $I=\langle x_1, \dots, x_{n-1}\rangle$. We denote by $\mathbb{C}\{\underline{x}\} = \mathbb{C}\{x_1, \dots,x_n\}$ the ring of germs of analytic functions in $x_1, \dots, x_n$ over $\mathbb{C}$. In addition, for an ideal $\mathfrak{a}\subset \mathbb{C}\{\underline{x}\}$, we view $V(\mathfrak{a})$ both as a subset of $\text{Spec}(\mathbb{C}\{\underline{x}\})$ and as a germ in $(\mathbb{C}^n,\underline{0})$, as in the R\"uckert Nullstellensatz (See Theorem 1.72 in~\cite{greuel2007introduction} for more details).  \\

\textbf{Acknowledgments.} The results presented in Section~\ref{section:rel_mors}, in addition to some of the general ideas in this paper, were obtained under the guidance of of Dmitry Kerner, as part of the author's M.Sc. thesis, and we thank him for his guidance, support, and contribution. We wish to thank Ishai Dan-Cohen, Srikanth Iyengar, Karl Schwede, Kurt Vinhage, and Ariel Yadin for enjoyable and productive discussions. In addition, we thank Matthew Bertucci, Edmund Karasiewicz, and Karl Schwede for their input and comments on
the different drafts of this paper. The Author was partially supported by the ISF grant 621/21 of Ishai Dan-Cohen and by the ISF grant 1910/18 of Dmitry Kerner.\\

\section{Transversal Discriminant}\label{section:trans_disc}

In this section we summarize and review some basic properties of the transversal discriminant of a hypersurface, based upon~\cite{kerner2017discriminant, kazarian2013discriminant}. We start with a motivational example, the discriminant of a polynomial in one variable (note that we in fact use this example in the proof of Theorem~\ref{prop:morsif}).

\begin{definition}\label{def:discriminant_poly}
Let $p(z)=\sum_{j=0}^n a_j z^j \in \mathbb{C}[z]$ be a complex polynomial (of degree $n \geq 2$) whose roots are $\alpha_1, \dots, \alpha_n \in \mathbb{C}$. Then the \textbf{discriminant of $p$} is defined to be 
\begin{equation*}
    \Delta_z(p)=a_n^{2n-2} \prod_{i < j} (\alpha_i -\alpha_j)^2
\end{equation*}
\end{definition}

\begin{proposition}\label{property:discriminant_resultant}\label{rem:disc_red_one_var}
Let $p(z)=\sum_{j=0}^n a_j z^j \in \mathbb{C}[z]$ be a polynomial of degree $n$. Then $\Delta_z(p)=0$ if and only if $p$ is non reduced, and we have that 
\begin{equation*}
    \Delta_z(p)=\frac{(-1)^{\frac{n(n-1)}{2}}}{a_n} Res_z(p,\frac{dp}{dz}),
\end{equation*}

where $Res_z(p,\frac{dp}{dz})$ is the resultant of the polynomials $p$ and $\frac{dp}{dz}$.

\end{proposition}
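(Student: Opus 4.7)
The plan is to derive both statements directly from the product formula defining $\Delta_z(p)$ together with the standard Poisson expression of the resultant in terms of the roots. Both parts are essentially bookkeeping once these identities are on the table, so the only genuine risk is tracking the sign $(-1)^{n(n-1)/2}$ correctly.

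For the first claim, I would observe that since $p$ has degree exactly $n$, the leading coefficient $a_n$ is nonzero, so the factor $a_n^{2n-2}$ in the defining formula of $\Delta_z(p)$ never vanishes. Hence $\Delta_z(p)=0$ if and only if some factor $(\alpha_i-\alpha_j)$ with $i<j$ vanishes, i.e.\ if and only if $p$ has a repeated root in $\mathbb{C}$. Since $p=a_n\prod_i(z-\alpha_i)$ is a unique factorization into linear terms, this is precisely the statement that $p$ is non-reduced in $\mathbb{C}[z]$.

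For the resultant identity, I would invoke the standard root formula
\begin{equation*}
\text{Res}_z(p,q)\;=\;a_n^{\deg q}\prod_{i=1}^n q(\alpha_i),
\end{equation*}
valid for $p=a_n\prod_i(z-\alpha_i)$, and apply it with $q=p'$. Differentiating the factored form gives $p'(\alpha_i)=a_n\prod_{j\neq i}(\alpha_i-\alpha_j)$, hence $\prod_i p'(\alpha_i)=a_n^n\prod_{i\neq j}(\alpha_i-\alpha_j)$. Splitting each unordered pair $\{i,j\}$ into its two orderings produces the sign $(-1)^{n(n-1)/2}$ together with the square $\prod_{i<j}(\alpha_i-\alpha_j)^2$. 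Combining the powers of $a_n$ (namely $a_n^{n-1}\cdot a_n^n=a_n^{2n-1}$) and comparing with the $a_n^{2n-2}$ appearing in $\Delta_z(p)$ yields $\text{Res}_z(p,p')=(-1)^{n(n-1)/2}a_n\,\Delta_z(p)$, which upon rearrangement is the desired relation.

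The only real obstacle is sign-tracking: one must verify that the $(-1)^{n(n-1)/2}$ arising from reversing the order within each pair $(\alpha_i,\alpha_j)$ matches the sign in the statement, and that no extra sign slips in from the convention chosen for $\text{Res}_z(p,p')$. Once that is pinned down, both halves of the proposition follow immediately; no deformation or further singularity theory is needed, since the statement lives entirely within the elementary theory of one-variable polynomials and is only invoked later as a motivating special case of the transversal discriminant.
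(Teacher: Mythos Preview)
Your argument is correct. The paper does not actually prove this proposition; it simply cites Section~1.B of Chapter~12 of Gelfand--Kapranov--Zelevinsky, \emph{Discriminants, Resultants, and Multidimensional Determinants}. What you have written is essentially the standard elementary derivation that one finds in that reference (or in any treatment of the one-variable discriminant): the Poisson product formula for the resultant combined with the evaluation $p'(\alpha_i)=a_n\prod_{j\neq i}(\alpha_i-\alpha_j)$, followed by the sign bookkeeping that converts $\prod_{i\neq j}$ into $(-1)^{\binom{n}{2}}\prod_{i<j}(\cdot)^2$. Your tracking of the powers of $a_n$ and of the sign is accurate, and the first claim about non-reducedness is immediate from the product definition as you say. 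So your proposal is not a different route so much as a self-contained spelling-out of what the paper outsources to a citation; either is acceptable here, since the proposition is purely a classical input and plays no structural role beyond motivating the transversal discriminant.
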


\begin{proof}
See Section 1.B of Chapter 12 of~\cite{gelfand2008discriminants}. 
\end{proof}

\begin{remark}\label{remark:discriminant_alg}
\begin{enumerate}
    \item \textup{If we look at every polynomial of degree $n$ as its tuple of coefficients, then by Proposition~\ref{property:discriminant_resultant}, the discriminant defines a polynomial map $ \mathbb{C}^{n+1} \to  \mathbb{C}^1$}
    \item\textup{One can generalize the concept of the discriminant of a polynomial to polynomials in a number of variables. For proofs and more details regarding the discriminant and the resultant, see~\cite{gelfand2008discriminants}. 
}
\end{enumerate}
\end{remark}

One might ask if we can capture properties other than reducability in a similar way. Specifically, the property we are interested in is how the transversality type of our hypersurface along its singular locus changes and degenerates:\\

Let $(X,0)=(V(f),0) \subset (\mathbb{C}^n,0)$ be a hypersurface with $Z=\Sing(V(f))$, where we assume that $Z$ is smooth. For every $o \in Z$ we consider a smooth germ $(L,o) \subset (\mathbb{C}^n,o)$ such that $(L,o) \cap (Z,o) =\{o\}$ and $(L,o)$ is transversal to $(Z,o)$. By looking at $(X \cap L,o)$ and changing $o$, we can see that its type is generically constant in some sense, but there are still points in which this transversal type degenerates.\\

\begin{example}
\textup{Let $X=V(f) \subset \mathbb{C}^3$ where $f(x,y,z)=x^p-y^pz$. Note that $\Sing(V(f))=V(x,y)$. For every $z_0 \in \mathbb{C}$, we can view $f_{z_0}(x,y)=f(x,y,z_0)$ as a polynomial in two variables which defines a plane curve singularity. For every $z_0 \neq 0$, we have that $f_{z_0}$ defines an ordinary multiple point with $p$ tangents, but for $z_0=0$ we have that $f_0(x,y)=x^p$ which is a unique line of multiplicity $p$. So $z_0=0$ is a point in $V(x,y)$ where the transversal type degenerates.}
\end{example}

This intuition leads us to the definition of the transversal discriminant of $f$.

\begin{definition}[2.2.5 in~\cite{kazarian2013discriminant}]\label{def:trans_disc}
 Given $f \in \mathbb{C}\{\underline{x}\}$ with $\text{Sing}(V(f))=V(I)$, the \textbf{transversal discriminant} of $f$, is defined to be
\begin{align*}
    \Delta^\perp(f) = V(\Fitt_0 (\pi_* \mathcal{O}_{\Crit(\pi)} )),
\end{align*}
where $\pi \colon \widetilde{V(f)} \to V(f)$ is the restriction of the blow up map $Bl_{V(I)} ( \mathbb{C}^n) \to \mathbb{C}^n$ to the strict transform of $V(f)$, and $\Fitt_0$ is the zeroth fitting ideal.
\end{definition}

\begin{example}\label{example:transversal}
\textup{Let $f(\underline{x})=x_nx_{n-1}^p+\sum_{i=1}^{n-1} x_i^p \in I^p$ for some $p \geq 2$, where $I=\langle x_1, \dots, x_{n-1} \rangle$. Then $\Sing(V(f))=V(I)$, and thus  
\begin{equation*}
     Bl_{V(I)}(\mathbb{C}^n)=\{(x_1,\dots,x_n,[\sigma_1 \colon \dots \colon \sigma_{n-1}]) \colon \forall i, j,  x_i \sigma_j=x_j \sigma_i\}. 
\end{equation*}
So, if we denote by $E$ the exceptional divisor of $Bl_{V(I)}(\mathbb{C}^n)$, then}
\begin{equation*}
    E=\{(0,\dots ,0,x_n,[\sigma_1 \colon \cdots \colon  \sigma_{n-1}])\},
\end{equation*}

\textup{and we can compute that}

\begin{equation*}
    \widetilde{V(f)} \cap E= \{x_n\sigma_{n-1}^p+\sum_{i=1}^{n-2}\sigma_i^p=0, x_1=\cdots =x_{n-1}=0\}.
\end{equation*}

\textup{Now, by restricting the blow up map to $\pi \colon \widetilde{V(f)} \cap E \to V(f)$ and looking at $\pi$ with respect to each chart of the blow up, we can compute that $\Crit(\pi)= V(x_n, \sigma_1 ^{p-1}, \dots, \sigma_{n-2}^{p-1} )$ and so $     \mathcal{O}_{\Crit(\pi)} =\sfrac{\mathbb{C}\{x_n,\sigma_1, \dots, \sigma_{n-2} \}}{\langle x_n, \sigma_1^{p-1}, \dots,  \sigma_{n-2}^{p-1} \rangle}.$ Yet we have an isomorphism of $\mathbb{C}[x_n]-$modules
$\sfrac{\mathbb{C}[x_n,\sigma_1, \dots, \sigma_{n-2} ]}{\langle x_n, \sigma_1^{p-1}, \dots,  \sigma_{n-2}^{p-1} \rangle} \cong 
     \bigoplus_{0 \leq i_1, \dots, i_{n-2} \leq p-1} \mathbb{C} \cdot \sigma_1^{i_1} \cdots \sigma_{n-2}^{i_{n-2}} \cong 
     (\sfrac{\mathbb{C}\{x_n\}}{\langle x_n \rangle})^{\oplus (p-1)^{n-2}}$, and so we have that $\pi_* \mathcal{O}_{\Crit(\pi)}=(\sfrac{\mathbb{C}\{x_n\}}{\langle x_n \rangle})^{\oplus (p-1)^{n-2}}$. Therefore, we can conclude that  }

\begin{equation*}
    \Fitt_0(\pi_*\mathcal{O}_{\Crit(\pi)})=\Fitt_0(\sfrac{\mathbb{C}[x_n]}{\langle x_n \rangle})^{(p-1)^{n-2}} =\langle x_n^{(p-1)^{n-2}} \rangle. 
\end{equation*}
\end{example}

We end this section with a few remarks which play a crucial role in the proofs later on. 

\begin{remark}\label{remark:trans_poly}
\begin{enumerate}
    \item \textup{Proposition 4.4 in~\cite{kerner2017discriminant} tells us that $\Delta^\perp(f) \subset \Sing(V(f))$ is a Cartier Divisor. In fact, an explicit generator for $I(\Delta^\perp(f))$ (in the case we are interested in) is given in section 4.1.1 in~\cite{kerner2017discriminant}, and in~\cite{kazarian2013discriminant}, the equivalence class of $\Delta^\perp(f)$ as a Cartier divisor in the Picard group $Pic(\text{Sing}(V(f)))$ has been studied and computed using the Thom-Porteus Formula.}
    \item \textup{A useful conclusion (from item 1 of this remark) is that given a polynomial $f \in \mathbb{C}[\underline{x}]=\mathbb{C}[x_1, \dots, x_n]$, then this explicit generator of $\Delta^\perp(f)$ is a polynomial in the coefficients of $f$, and if $d$ is the total degree of $f$, then the degree of this polynomial is bounded by $\tilde{n}=(n-1)(d-1)^{n-2}$ (see Section 5.1 in~\cite{kerner2017discriminant}). Therefore, this induces an algebraic map $ \mathbb{C}^N \to  \mathbb{C}^{\tilde{n}}$, where we associate (by looking at the coefficient) $\mathbb{C}[\underline{x}]_{\leq d} =\{ f \in \mathbb{C}[\underline{x}] \colon \deg_{\text{total}}(f) \leq d\}$ with $ \mathbb{C}^N$ (for $N = \binom{d+n}{n}$) and $\mathbb{C}[x]_{\leq \tilde{n}} = \{ f \in \mathbb{C}[x] \colon \deg(f) \leq \tilde{n}\}$ with $ \mathbb{C}^{\tilde{n}}$.  }
\end{enumerate}
\end{remark}

\begin{remark}\label{rem:top_trans_disc}
\textup{Definition~\ref{def:trans_disc} gives us an additional topological understanding of the transversal discriminant (assuming that the generic transversal type of $V(f)$ along its singular locus is an ordinary multiple point): $\Delta^\perp(f)$ is empty if and only if the transversal sections of $V(f)$ along its singular locus are topologically equisingular, i.e., for every $p_1$ and $p_2$ in $\text{Sing}(V(f))$ (which we assume is smooth), if we consider $(L_1,p_1) \subset (\mathbb{C}^n,p_1)$ and $ (L_2,p_2) \subset (\mathbb{C}^n,p_2)$ such that $(L_i,p_i) \cap (\text{Sing}(V(f)),p_i) =\{p_i\}$ and $(L_i,p_i)$ is transversal to $(\text{Sing}(V(f)),p_i)$, for $i=1,2$, then the germs $(V(f) \cap L_1, p_1)$ and $(V(f) \cap L_2, p_2)$ are homeomorphic. For a deeper discussion on this result, see Section 2.2.3 in~\cite{kazarian2013discriminant} and Section 2.7 in~\cite{kerner2017discriminant}. }
\end{remark}


\begin{remark}\label{prop:disc_flat_defor}
    \textup{An important property of the transversal discriminant is that the transversal discriminant behaves nicely under flat deformations. Proposition 5.1 in~\cite{kerner2017discriminant} tells us that given $f \in \mathbb{C}\{\underline{x}\}$ and $f_t \in \mathbb{C}\{\underline{x},t\}$ a flat deformation of $f$ which deforms the singular locus of $V(f)$ flatly and the generic multiplicity of $f$ along its singular locus remains constant, then the family $\{\Delta^\perp(f_t)\}_t$ is flat and $\deg(\Delta^\perp(f_{t_0})) = \deg(\Delta^\perp(f))$ for every $t_0$, where the degree of $\Delta^\perp(f)$ is the degree of the transversal discriminant of $f$ as a Cartier divisor. For example, let $f(x,y,z)=x^p+y^pz$ and let $f_t(x,y,z)=x^p+y^pz+txy^{p-1}$. Then, one can compute that for every small $t$ we have that }
    \begin{equation*}
    \Delta^\perp(f_t)=\{z^{p-1}-\Big(\frac{t}{p}\Big)^p (1-p)^{p-1}=0\}
\end{equation*}
\textup{while $\Delta^\perp(f)=\{z^{p-1}=0\}$.}

\end{remark}

\section{Relative Morsification}\label{section:rel_mors}

In this section we discuss how we can deform $f$ in a specific way (as we define in Definition~\ref{def:morsification}), which plays a crucial role in the proof of Theorem~\ref{thm:main}, and later Theorem~\ref{thm:main_Jac_gen} and Theorem~\ref{thm:delta_trans_bound}. The concept of a relative morsification generalizes the concept of a morsification of a hypersurface $V(f)$ with a reduced line singularity, as presented in~\cite{siersma1983isolated, pellikaan1990deformations, de1988some}, as we discuss in Section~\ref{section:closing}. 


\begin{definition}\label{def:morsification}
\begin{enumerate}
    \item Given a point $\underline{y} \in\Crit(f)$, we say that $\underline{y}$ is a \textbf{Morse critical point of $f$} (or an $A_1$ point of $f$) if $\Hess(f)(\underline{y})$, the Hessian matrix of $f$ at the point $\underline{y}$, is invertible. 
    \item  We say that $f \in \mathbb{C}\{\underline{x}\}$ is  \textbf{Morse outside of $V(I)$} if $\text{Sing}(V(f))=V(I)$ and $f$ has only Morse critical points in a small neighborhood outside its singular locus.
    \item Let $f \in I^p \setminus I^{p+1}$ such that $\Sing(V(f))=V(I)$. We say that a flat deformation $f_{t} \in \mathbb{C}\{\underline{x},t\}$ of $f$ is a \textbf{relative morsification} of $f$ if $f_0=f$ and there exists some neighborhood $0 \in U \subset \mathbb{C}$ such for every $0 \neq t_0 \in U$:
\begin{enumerate}
    \item $\Sing(V(f_{t_0}))=V(I)$ and $f_{t_0} \in I^p \setminus I^{p+1}$.
    
    \item  ${f}_{t_0} \in I^p$ is Morse outside of $V(I)$. 
    \item The transversal discriminant $\Delta^\perp ({f}_{t_0})$ is a reduced subscheme of $V(I)$.
\end{enumerate}
\end{enumerate}

\end{definition}

\begin{remark}
    \textup{An analogue concept of a relative morsification has been studied from a topological point of view in Bobadillia~\cite{de2004relative} which also generalizes the concept of a morsification presented in~\cite{siersma1983isolated, pellikaan1990deformations, de1988some}. }
\end{remark}

\begin{example}\label{ex:def_mors_stuff} \label{example:relative} \label{example:table_mors}
\begin{enumerate}
    \item \textup{The following table contains a few examples of relative morsifications (reducability of the transversal discriminant follows from similar computations to the ones preformed in Example~\ref{example:transversal}): }
    \begin{center}
    \hspace*{-2.3cm}
    \begin{tabular}{ | l | l |}
    \hline
    $f$ & \textup{A relative morsification of} $f$ \\ \hline \hline
     $x^p+y^pz$ & $t(x^2y^{p-2}+y^2x^{p-2})+y^pz+x^p$ \\ \hline
     $x^p+y^pz^q+y^{p+1}$ & $t(x^2y^{p-2}+y^2x^{p-2})+y^p(z^q-tz)+y^{p+1}+x^p$  \\ \hline
     $x^pz^{q_1}+y^pz^{q_2}+y^{p+1}+x^{p+1}$ & $x^p(z^{q_1}-tz)+y^p(z^{q_2}-tz)+y^{p+1}+x^{p+1}+t(x^2y^{p-2}+y^2x^{p-2})$ \\ \hline 
     $\prod_{i=1}^n (x^{p_i}+y^{p_i}z)$ & $\prod_{i=1} ^n (t(x^2y^{p_i-2}+y^2x^{p_i-2})+y^{p_i}z+x^{p_i})$ \\ \hline
      $\prod_{i=1}^n(x^{p_i}+y^{p_i}z^{q_i}+y^{p_i+1})$ & $\prod_{i=1}^n (t(x^2y^{p_i-2}+y^2x^{p_i-2})+y^{p_i}(z^{q_i}-tz)+y^{p_i+1}+x^{p_i})$ \\ \hline 
     $\sum_{i=1}^{n-2}  x_i^p +x_{n-1}^px_n^q +x_{n-1}^{p+1}$ & $\sum_{i=1}^{n-2}  x_i^p +x_{n-1}^p(x_n^q-tx_n) +x_{n-1}^{p+1} + t(\sum_{i \neq j <n} x_i^2 x_j^{p-2})$ \\ \hline
     $\sum_{i=1}^{n-2} (x_i^p x_n^{q_i}+x_i^{p+1})$ & $\sum_{i=1}^{n-2} (x_i^p(x_n^{q_i}-tx_n)+x_i^{p+1}) + t(\sum_{i \neq j <n} x_i^2 x_j^{p-2})$ \\ \hline 
    \end{tabular}
    \label{tab:my_label}
    \end{center}
    \item \textup{Let $f(\underline{x})=\sum_{i=1}^n x_i^p$ where $p>1$. Then $f$ is Morse outside $V(I)$ as it has no non-singular critical points and $\Delta^\perp(f)=\emptyset$ since its transversal type does not change along its singular locus. }
    \item \textup{The deformation $f_t(x,y,z) = x^{p+1}+y^p +tx^p$ is not a relative morsification since for every $t_0 \neq 0$ we have that $(\frac{-tp}{p+1}, 0, z_0)$ is a non-Morse critical point for every $z_0$ which does not belong to $V(I)$.  }
\end{enumerate}
\end{example}

\begin{remark}\label{rem:empty_discr}
 \textup{If $\Delta^\perp(f)$ is empty then any morsification of $f$ has no Morse critical points outside $V(I)$. This is true since its blow up is smooth and as a deformation of a smooth hypersurface is smooth, and the Milnor number stays zero (see Section 2.6 in~\cite{greuel2007introduction} for more details). But the Milnor number of an isolates singularity is the number of Morse points in any morsification of $f$ (see Proposition 3.19. in~\cite{ebeling2007functions}). In fact, we can conclude that if $\Delta^\perp(f) =\emptyset$ then for every deformation $f_t$ (as in Remark~\ref{prop:disc_flat_defor}) and for every small ennough $t_0$ we have that $\Delta^\perp(f_{t_0}) =\emptyset$.}

\end{remark}



\begin{theorem}\label{prop:morsif}
For every $f\in I^p \setminus I^{p+1}$ with $\Sing(V(f))=V(I)$ such that its generic transversal type is an ordinary multiple point, there exists a relative morsification. Moreover, it can be taken in the form $f_t=f + tg$ for some $g \in I^p$.
\end{theorem}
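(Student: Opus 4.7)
The plan is to construct a deformation of the form $f_t = f + tg$ for a carefully chosen $g \in I^p$, and verify that for all small $t_0 \neq 0$ the three conditions of Definition~\ref{def:morsification} hold. The conditions $f_{t_0} \in I^p \setminus I^{p+1}$ and $V(I) \subseteq \Sing(V(f_{t_0}))$ are automatic for sufficiently small $t_0$: the first because $g \in I^p$ and $f \notin I^{p+1}$, the second because $f_{t_0} \in I^p$ with $p \geq 2$. The substantive conditions to secure are therefore (i) that $V(f_{t_0})$ has no singularities off $V(I)$ (equivalently, the critical values of $f_{t_0}$ outside $V(I)$ are nonzero), (ii) that $f_{t_0}$ is Morse outside $V(I)$, and (iii) that $\Delta^\perp(f_{t_0})$ is a reduced divisor on $V(I)$.

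The central observation is that all three of these conditions are Zariski-open in a finite-dimensional truncation of $g$, so it will suffice to exhibit one $g$ that works. For condition (iii), Remark~\ref{remark:trans_poly}(2) furnishes an explicit generator of the ideal $I(\Delta^\perp(f+tg))$ whose coefficients are polynomial in those of $f + tg$; since $V(I) \cong \mathbb{A}^1$, reducedness amounts to the nonvanishing of the one-variable discriminant (Definition~\ref{def:discriminant_poly}, Proposition~\ref{property:discriminant_resultant}) of this generator viewed as a polynomial in $x_n$, giving an open algebraic condition on the coefficients of $g$. Conditions (i) and (ii) are open by standard transversality: nondegeneracy of $\Hess(f_{t_0})$ at critical points off $V(I)$ and nonvanishing of $f_{t_0}$ at those critical points are both algebraic inequalities. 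Because the conditions depend only on a sufficiently high jet of $g$, the space of admissible $g$'s can be taken finite-dimensional.

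To produce one such $g$, I follow the pattern of the table in Example~\ref{example:table_mors}: take $g = g_1 + g_2$ with $g_2 = \sum_{i \neq j,\, 1 \leq i,j \leq n-1} x_i^2 x_j^{p-2} \in I^p$ and $g_1$ an arbitrary element of $I^p$. Because the generic transversal type of $f$ along $V(I)$ is an ordinary multiple point, at a generic point of $V(I)$ the leading transversal jet of $f$ can be brought (via a parameter-dependent linear change of $x_1,\dots,x_{n-1}$) to the form $x_1^p + \dots + x_{n-1}^p$; adding $t_0 g_2$ perturbs this into the configuration analyzed in Example~\ref{example:transversal}, producing $(p-1)^{n-2}$ Morse critical points in the transversal slice and splitting the generic point of $\Delta^\perp(f)$ into distinct reduced components of $\Delta^\perp(f_{t_0})$. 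A further generic choice of $g_1$ then breaks any residual higher-order collisions in $\Delta^\perp(f + t_0 g)$, using the openness established in the previous step.

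The main obstacle I anticipate is the behavior at the non-generic points of $V(I)$, where the transversal type of $f$ itself may be more degenerate than an ordinary multiple point and $g_2$ alone need not resolve the degeneracy. The fix is precisely the role of $g_1$: by the openness argument, once we know the admissible locus of $g$ is Zariski-open, showing nonemptiness needs only an algebraic-generic parameter count (or, in small cases, an explicit check from the table), and the combination $g_1 + g_2$ provides both ingredients — a concrete guarantee at generic points plus enough freedom to perturb away degeneracies on the closed nowhere-dense locus of non-generic transversal types.
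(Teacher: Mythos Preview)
Your overall strategy—show that the ``bad'' conditions (non-Morse outside $V(I)$, non-reduced transversal discriminant) cut out a proper closed locus in a finite-dimensional parameter space, then pick $g$ off that locus—is the same as the paper's. But two steps in your execution do not go through as written.

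First, the claim that at a generic point of $V(I)$ the transversal $p$-jet can be brought to $x_1^p+\cdots+x_{n-1}^p$ by a linear change is false for $p\geq 4$: smooth degree-$p$ hypersurfaces in $\mathbb{P}^{n-2}$ have moduli, so ``ordinary multiple point'' does not mean ``Fermat.'' Your construction of an explicit good $g$ rests on this reduction and therefore does not establish non-emptiness. The paper sidesteps this entirely: it exhibits \emph{one} polynomial in $I^p$ (from Example~\ref{example:relative}) lying outside both bad loci, independent of $f$, and then uses that the bad loci are cones to conclude that for such $g$ the line $\{f+tg\}$ meets them only finitely often.

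Second, you assert that the conditions depend only on a high jet of $g$ and so reduce to a finite-dimensional algebraic problem, but $f$ is analytic, and the Morse condition on $f+t_0g$ involves locating critical points of an analytic function—this is not obviously algebraic in the coefficients of $g$. The paper handles this by first reducing (via $\mathcal{R}$-equivalence, using $\operatorname{ht}(Jac(f))=1$) to $f\in\mathbb{C}\{x_n\}[x_1,\dots,x_{n-1}]$, then arguing slice-by-slice: for each $s_0$, the transversal polynomial $f_{s_0}\in\mathbb{C}[x_1,\dots,x_{n-1}]$ lives in a fixed finite-dimensional space, and the analytic curve $\{f_s\}$ either meets the bad locus discretely or is contained in it. Your sketch needs this reduction (or the alternative Sard/Fubini argument the paper also gives) to become a proof. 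Finally, your assertion that the Morse condition is ``open by standard transversality'' is correct in spirit but the paper's argument (closure of an incidence variety in $\mathbb{P}^{N-1}\times\mathbb{P}^n$, then projection) is what actually shows the non-Morse locus is Zariski-closed rather than merely constructible.
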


\begin{proof}

We prove this proposition in two steps. In the first step we assume that $f$ is a polynomial in $\mathbb{C}[\underline{x}]$ and we find a polynomial relative morsification for $f$. In the second step we prove the result (for analytic functions), based upon the first step.   
\begin{itemize}
    \item \underline{Step 1: Assume that $f \in  \mathbb{C}[\underline{x}]$.}
\end{itemize}
Denote the (total) degree of $f$ with respect to $x_1, \dots, x_{n}$ by $d$. 
\\

By viewing every polynomial in $\mathbb{C}$ as its tuple of coefficients, we identify $\mathbb{C}[x_1, \dots, x_{n}]_{\leq d}$ (the set of all polynomials $g \in \mathbb{C}[\underline{x}]=\mathbb{C}[x_1, \dots, x_n]$ whose degree  with respect to $x_1, \dots, x_{n}$ does not surpass $d$), with the affine space $ \mathbb{C}^N$ where $N=\binom{d+n}{n}$. Now, viewed as subsets of $\mathbb{C}^N$,

\begin{enumerate}
    \item Denote by $\Sigma_1$ the set of polynomials $g \in I^p \cap  \mathbb{C}[x_1, \dots, x_{n}]_{\leq d}$  such that $g$ is not Morse outside $V(I)$, as in Definition~\ref{def:morsification}.
    \item Denote by $\Sigma_2$ the set of polynomials $g \in I^p \cap \mathbb{C}[x_1, \dots, x_{n}]_{\leq d}$  such that $\Delta^\perp(g)=\langle h \rangle$ where $h \in \mathbb{C}[x_n]$ is not reduced or $h=0$.
\end{enumerate}

We divide this step into three parts: In the first two parts (1.1 and 1.2) we show that $\overline{\Sigma_1}$ (the closure of $\Sigma_1$ in $\mathbb{C}^N$ with respect to the Euclidean topology) and $\Sigma_2$ are algebraic subsets which are proper subsets of $I^p$ (where we consider $I^p$ as a subset of $ \mathbb{C}^N$). In the third part (1.3) we construct the desired relative morsification of $f$, using the fact that $f_t$ is a relative morsification if and only if for every small enough $t_0$, $f_{t_0} \in I^p \setminus (\Sigma_1 \cup \Sigma_2)$ and $f_0=f$. 
\\

We are interested in $\overline{\Sigma_1}$ since $\Sigma_1$ is not closed in the classical topology. This is true because if $g_t$ has a unique non-Morse point at $(t,0,..,0)$, then as $t \to 0$ we have that $g_0 \notin\Sigma_1$.\\ 

Note that for every ideal $\mathfrak{a} \subset \mathbb{C}[\underline{x}]$ we have that  the (set theoretical) intersection $\mathfrak{a} \cap \mathbb{C}[\underline{x}]_{\leq d}$ is a complex vector space, and thus it is an algebraic subset of $ \mathbb{C}^N$ (as it is the intersection of linear hypersurfaces). 
\\

By Example~\ref{example:relative}, for every $t$, the function $\sum_{i=1}^{n-2}  x_i^p +x_{n-1}^p(x_n^q-tx_n) +x_{n-1}^{p+1} + t(\sum_{i \neq j <n} x_i^2 x_j^{p-2})$ has a reduced non trivial transversal discriminant and no non-Morse critical points outside of $V(I)$, which tells us that $\Sigma_1 \cup \Sigma_2 \neq I^p$.
\\


\vspace{4mm}

\underline{\textbf{Part 1.1}}:  Note that since $\Sigma_1$ is a cone in $ \mathbb{C}^N$ (that is, closed under multiplication by a complex scalar), $\overline{\Sigma_1}$ is a cone as well. Therefore it is enough to prove that $\mathbb{P}(\overline{\Sigma_1}) \subset \mathbb{P}^{N-1}(\mathbb{C})$ is a projective algebraic subset. Now, look at the set 
\begin{equation*}
    \Sigma_{1, pt} =\{([g],a) \in \mathbb{P}^{N-1}(\mathbb{C}) \times  \mathbb{C}^n \colon \text{$a \notin V(I)$ is not a Morse point of $g \in I^p$}\}. 
\end{equation*}

Note that $\Sigma_{1, pt}$ is well defined since $\Sigma_1$ is a cone. Take the compactification $\iota \colon  \mathbb{P}^{N-1}(\mathbb{C})  \times  \mathbb{C}^n \to \mathbb{P}^{N-1}(\mathbb{C})\times \mathbb{P}^n(\mathbb{C})$ defined by
\begin{equation*}
    \iota ([\sigma_1 \colon \dots \colon \sigma_N], z_1, \dots, z_n) = ([\sigma_1  \colon \dots \colon \sigma_N], [z_1 \colon \dots \colon z_n \colon 1]),
\end{equation*}

and let $\overline{\iota(\Sigma_{1,pt})}$ be the Zariski closure of the image of $\Sigma_{1,pt}$ under this embedding.  Note that
\begin{equation*}
    \Sigma_{1,pt} \subsetneq Y= \{ ([g],a) \colon \nabla(g)(a)=0 = \det(\Hess(g)(a)) \}\subsetneq \mathbb{P}^{N-1}(\mathbb{C}) \times  \mathbb{C}^n.
\end{equation*}

Thus we have that $\overline{\iota(\Sigma_{1,pt})} \neq  \mathbb{P}^{N-1}(\mathbb{C})\times \mathbb{P}^n(\mathbb{C})$, as it must be contained inside $\iota(Y) \subsetneq \mathbb{P}^{N-1}(\mathbb{C})\times \mathbb{P}^n(\mathbb{C})$.\\  

Denote by $\pi \colon \mathbb{P}^{N-1}(\mathbb{C})\times \mathbb{P}^n(\mathbb{C}) \to \mathbb{P}^{N-1}(\mathbb{C})$ the projection map.
Since $\overline{\iota(\Sigma_{1,pt})}$ is an algebraic subset, by Theorem 3.12. in~\cite{harris2013algebraic} we have that
$\pi(\overline{\iota(\Sigma_{1,pt})})$ is an algebraic subset as well. 
But since  $\pi(\iota(\Sigma_{1,pt}))= \mathbb{P}(\Sigma_1)$, we have that $\pi(\overline{\iota(\Sigma_{1,pt})})=\mathbb{P}(\overline{\Sigma_1})$, and we can conclude that $\mathbb{P}(\overline{\Sigma_1})$ is an algebraic set. 



\vspace{4mm}
\underline{\textbf{Part 1.2}}: By Proposition~\ref{remark:trans_poly} we know that $\Delta^\perp(g) \subset \Sing(V(g))$ is an algebraic subset for every $g \in I^p$. Denote by $h_g$ the defining generator of $I(\Delta^\perp(g))$, as mentioned in Remark~\ref{remark:trans_poly}, and note that $h_g \in \mathbb{C}[x_n]$. \\

Let $\Phi \colon  I^p \to  \mathbb{C}^1$ be the map $\Phi(f)=\Delta_{x_n}(h_f)$, where $\Delta_{x_n}$ is defined in Definition~\ref{def:discriminant_poly} and, as before, we view $I^p$ as a subset of $\mathbb{C}^N$. $\Phi$ is a polynomial map since by Remark~\ref{remark:discriminant_alg}, $\Delta_{x_n}$ defines a polynomial map, and, by Remark~\ref{remark:trans_poly}, the map $f \mapsto h_f$ is a polynomial map as well.\\

Thus, since we have that $\Phi^{-1}(\{0\})=\Sigma_2$ and since $\{0\} \subset  {\mathbb{C}}^1$ is an algebraic subset, then we get that $\Sigma_2 \subset I^p$ is an algebraic subset.

\vspace{4mm}

\underline{\textbf{Part 1.3}}: By the previous parts, $\overline{\Sigma_1} \subsetneqq I^p \cap \mathbb{C}[\underline{x}]_{\leq d}$ and $\Sigma_2 \subsetneqq I^p \cap \mathbb{C}[\underline{x}]_{\leq d}$ are algebraic subsets. Therefore there exists some $g \in I^p$ such that $g \notin \Sigma_1 \cup \Sigma_2$, because $I^p$ is a vector space which is irreducible as an algebraic subset. Denote $f_t=f+tg$. Then $f_t$ is a relative morsification of $f$ since $\{f_t \colon t \in \mathbb{C}\} \subset  {\mathbb{C}}^N$ is a line and thus must intersect $\overline{\Sigma_1} \cup \Sigma_2$ in only finitely many points. 
 \begin{itemize}
     \item \underline{Step 2 ($f\in \mathbb{C}\{\underline{x}\}$):} 
 \end{itemize} 
First, note that since $V(I)$ is a smooth curve and $f \in I^p \setminus I^{p+1}$, then $ht(Jac(f))=1$, and thus $f$ is $\mathcal{R}-$equivalent to an element of $\mathbb{C}[x_1, \dots, x_{n-1}]\{x_n\}$ (See Proposition 5.19 in~\cite{boix2018pairs}). Moreover, if $f_t$ is a relative morsification and if the automorphism $\varphi \colon \mathbb{C}\{\underline{x}\} \to \mathbb{C}\{\underline{x}\}$ preserves the singular locus of $f_t$, then $\varphi^*(f_t)=f_t \circ \varphi$ is a relative morsification as well. Thus,  it is enough to prove the existence of a relative morsification for analytic germs of the form $f \in \mathbb{C}\{x_n\}[x_1, \dots, x_{n-1}]$.\\

We view $f$ as an analytic function $f \colon U \to \mathbb{C}$, where $ 0 \in U \subset \mathbb{C}^n$ is an open cylinder $U=\mathbb{C}^{n-1} \times U_s$ with $U_s \subset \mathbb{C}^1_{x_1}$. For every $s_0 \in U_s$, denote by $f_{s_0}$ the tuple corresponding  with the polynomial in $(n-1)$ variables $f(x_1, \dots, x_{n-1}, s_0)$. (Here we identify $\mathbb{C}[x_1, \dots, x_{n-1}]_{\leq d_{n-1}}$ with the affine space $ \mathbb{C}^{K}$ where $K=\binom{d_{n-1}+n-1}{n-1}$ and $d_{n-1}$ is the degree of $f$ with respect to $x_1, \dots, x_{n-1}$). Denote by $\Gamma$ the set $\{f_s \colon s \in U_s\} \subset \mathbb{C}^K$. Since $f \in \mathbb{C}\{x_n\}[x_1, \dots, x_{n-1}]$ then $(\Gamma,0)$ is an irreducible analytic curve-germ in $\mathbb{C}^{K}$.\\

Now, note that $f$ is Morse outside $V(I)$ (as in Definition~\ref{def:morsification}) if and only if for every small enough $s$, the polynomial  $f_s \in \mathbb{C}[x_1, \dots, x_{n-1}]$ has no non-Morse points which are not the origin.  Denote 
\begin{equation*}
    \Sigma_0=\{g \in \langle x_1, \dots x_{n-1} \rangle^p \subset \mathbb{C}[x_1, \dots, x_{n-1}] \colon g \text{ is not Morse outside the origin} \}.
\end{equation*}
As in Part 1.1, we have that $\overline{\Sigma_0} \subset  \mathbb{C}^{K}$ is an algebraic subset. Since $(\Gamma,0)$ is an irreducible analytic curve, then either $\Gamma \cap \overline{\Sigma_0}$ is a discrete set or $\Gamma \subset \overline{\Sigma_0}$. \\

If $\Gamma \cap \overline{\Sigma_0}$ is a discrete set, then we can choose a neighborhood $\tilde{U}_{s} \subset U_s$ such that $\{ f_s \colon s \in \tilde{U}_s\} \cap \overline{\Sigma_0}$ is a single point (where $s=0$) and thus $f$ is Morse outside $V(I)$ in $\mathbb{C}^{n-1} \times U_s$.\\

Otherwise, $\Gamma \subset \overline{\Sigma_0}$, and since $\overline{\Sigma_0}$ is an algebraic subset then there exists some $g \in \langle x_1, \dots, x_{n-1}\rangle^p \subset \mathbb{C}[x_1, \dots, x_{n-1}]$ such that, for every $s$, $tg + (1-t)f_s$ intersects $\overline{\Sigma_0}$ only when $t=0$, and therefore $tg+(1-t)f$ is Morse outside $V(I)$. \\

By Part 1.2, since $\Sigma_2 \subset \mathbb{C}[\underline{x}] \subset \mathbb{C}\{x_n\}[x_1, \dots, x_{n-1}]$ is an algebraic subset, we can choose $g$ such that $tg+(1-t)f$ is a relative morsification of $f$. 
 
\end{proof}


We end this section by presenting a sketch of a more analytic proof of Theorem~\ref{prop:morsif}.  This proof is similar to the proof of Theorem 1.15 in~\cite{de1988some} and to the proof of Proposition 7.18 in~\cite{pellikaanhypersurface}, and heavily relies on tools from differential geometry such as Sard's theorem and Fubini's theorem. In addition, it gives us a more explicit form for the relative morsification (similar to the relative morsifications in Example~\ref{ex:def_mors_stuff}). 

 \begin{proof}

\textup{Consider $f$ as analytic function $f \colon U \to \mathbb{C}$ where $U \subset \mathbb{C}^n$. Set $N=(n-1)^2+n(n-1)$ and define $F \colon U \times \mathbb{C}^N \to \mathbb{C}$ by} 
\begin{equation*}
    F(\underline{x}, \underline{a}, \underline{b})=f(x)-\big(\sum_{i, j \leq n-1 } a_{i,j} x_j^{p-2} x_i^2 + \sum_{ k\neq l} b_{k,l} x_k x_l^p\big).
\end{equation*}

\textup{ We construct from $F(\underline{x}, \underline{a}, \underline{b})$ a relative morsification of $f$. Note that for every small enough $(\underline{a}, \underline{b})$ we have that the singular locus of $F(\underline{x}, \underline{a}, \underline{b})$ (with respect to $\underline{x}$) is exactly $V(I)$.  }\\

First, denote $F_{\underline{a}, \underline{b}}(\underline{x})= F(\underline{x}, \underline{a}, \underline{b})$ and denote by $h_{\underline{a}, \underline{b}} \in \mathbb{C}\{x_n\}$ the generating element of $I(\Delta^\perp(F_{\underline{a}, \underline{b}}))$. Let $H=\{(x_n, \underline{a}, \underline{b}) \in (V(I) \cap U) \times \mathbb{C}^N \colon h_{\underline{a}, \underline{b}}(x_n)=0\}$ where we identify $V(I)$ with a copy of $\mathbb{C}$. By directly computing the transversal discriminant of $F_{\underline{a}, \underline{b}}$ one can verify that $h_{\underline{a}, \underline{b}} $ is not the zero function, for every  $(\underline{a}, \underline{b}) \in \mathbb{C}^N$. Thus the projection map $p \colon H \to \mathbb{C}^N$ is a finite map since $h_{\underline{a}, \underline{b}}(x_n)$ has finitely many zeros in $V(I) \cap U$. By Sard's Theorem, the set $S_1 = p(\Crit(p))$ is a set of measure zero, and outside $p^{-1}(S_1)$ we have that $p$ is a finite covering map. Thus for every $\underline{s} \notin S_1$ we have that $\Delta^\perp (F_{\underline{s}})(x_n)$ is reduced.\\ 

\textup{Second, denote $\Phi_i=\frac{\partial F}{\partial x_i}$ and consider the function $\Phi=(\Phi_1, \dots, \Phi_n) \colon U \times \mathbb{C}^N \to \mathbb{C}^n$. Note that in order to prove that $F(x,\underline{a}, \underline{b})$ is a Morse function outside $V(I)$, it is enough to show that $\Phi(x,\underline{a}, \underline{b})$ is a submersion outside $(V(I) \cap U) \times \mathbb{C}^N$. Because this would imply that $\det(\Hess(F))\neq 0$. Accordingly, we study the critical locus of $\Phi$ via the differential $d\Phi \in \text{M}_{n \times (n+N)}(\mathbb{C})$. One calculates that for every $i,j$ we have that }
\begin{equation*}
    \frac{\partial \Phi_i}{ \partial a_{j,j}}= p x_j^{p-1} \delta_{i,j}
\end{equation*}
\textup{and for every $j \neq k$ and for every $i$ we have that} 
\begin{equation*}
    \frac{\partial \Phi_i}{ \partial b_{j,k}}=  x_k^{p-1} (p x_i \delta_{k,i} +\delta_{i,j} x_k).
\end{equation*}

\textup{ This gives us that }
\begin{equation*}
    \det \Bigg[ \frac{\partial \Phi}{\partial b_{1,k}} - x_1 \frac{\partial \Phi}{\partial a_{k,k}}, \dots, \frac{\partial \Phi}{\partial b_{n,k}} - x_n \frac{\partial \Phi}{\partial a_{k,k}} \Bigg] = x_i ^{p(n+1)}.
\end{equation*}

\textup{Thus, the ideal $\mathfrak{a}=\langle x_1^{p(n+1)}, \dots,  x_{n-1}^{p(n+1)}\rangle$ is contained in the ideal generated by the $n\times n$ minors of $d\Phi$. So $V(\mathfrak{a})=V(I)$ contains the critical locus of $\Phi$, and thus $\Phi$ is a submersion outside $(V(I) \cap U) \times \mathbb{C}^N$.}\\

\textup{Note that $\Phi^{-1}(0) \setminus ((V(I) \cap U) \times \mathbb{C}^N)$ is smooth of dimension $N$. therefore if we look at the projection $\pi \colon \Phi^{-1}(0) \setminus ((V(I) \cap U) \times \mathbb{C}^N) \to \mathbb{C}^N$, then this projection is non degenerate. }\\
 
 \textup{Now, define $\Psi \colon (U \setminus V(I)) \times \mathbb{C}^N \to Gr(\mathbb{C}^n \times \mathbb{C}^N,N)$ by $\Psi(x) = \ker(d\Phi(x))$, where $Gr(\mathbb{C}^n \times \mathbb{C}^N,N)$ is the $N-$th Grassmannian of $\mathbb{C}^n \times \mathbb{C}^N$. Note that $\Psi$ is $C^\infty$ and its critical local is of measure zero. Observe that the set $\mathcal{H} = \{ V \in Gr(\mathbb{C}^n \times \mathbb{C}^N,N) \colon V \cap (\mathbb{C}^n \times \{0\}) \neq \{0\} \}$ is a compact space of measure zero, and thus $\Psi^{-1}(\mathcal{H}) \subset (U \setminus V(I)) \times \mathbb{C}^N$ is closed and of measure zero as well. Thus, by Fubini's theorem, almost every section of $\Psi^{-1}(\mathcal{H})$ with respect to $\mathbb{C}^N$ is closed and of measue zero as well, and so its complement is open and of full measure.  }\\
 
 \textup{Therefore, for every $\underline{s} \notin S_2$ we have that $\Phi(\cdot, \underline{s}) \colon U \to \mathbb{C}^n$ is a submersion, i.e., $F(\underline{x}, \underline{s})$ has only Morse critical points in $U_1$ and outside $V(I)$, where $S_2$ is the set of elements in $\mathbb{C}^N$ with respect to which the sections of $\Psi^{-1}(\mathcal{H})$ are not of measure zero (up to shrinking $U$).}\\

Since $S_1$ and $S_2$ are sets of measure zero, then there exists some $s=(\underline{a}, \underline{b}) \in \mathbb{C}^N$ such that for every $0 \neq t \in \mathbb{C}$ small enough, we have that $t \cdot  s \notin S_1 \cup S_2$. Therefore we can conclude that $f_t(\underline{x})= F(\underline{x}, {t \cdot s})$ is a relative morsification of $f$.


\end{proof}

\section{Siersma-Pellikaan-de Jong Formula}\label{section:SPJ}

In this section we discuss how, inspired by Siersma, Pellikaan, and de Jong, we can bound the number of Morse points in any relative morsification and the degree of the transversal discriminant of $f$ (as a Cartier divisor) by an algebraic invariant. We start with a general construction (Theorem~\ref{thm:main}) from which we give a more concrete and computational invariant (Theorem~\ref{thm:main_Jac_gen}). \\

Recall that we assume that $f \in I^p \setminus I^{p+1}$ for some $p\geq 2$ with $\text{Sing}(V(f)=V(I)$ such that its generic transversal type is an ordinary multiple point. In addition, throughout this section we assume that $n \neq 4$, as it is an important assumption in Proposition~\ref{prop:trans_disc_LR} and in its application in the proof of Lemma~\ref{lemma:j=0_and_disc} (see Remark~\ref{rem:Le} for more details). 

\begin{notation}\label{notation:white_mountain}
\textup{Let $f \in I^p \setminus I^{p+1}$ with $\text{Sing}(V(f))=V(I)$ and let $f_t$ be a deformation of $f$. Consider the following objects:}
\begin{enumerate}
\item  \textup{$R=\mathbb{C}\{\underline{x},t\}$ is a local ring with a unique maximal ideal $\mathfrak{m} = \langle x_1, \dots, x_n , t \rangle$.}
\item \textup{The ideal $\mathfrak{p}=\langle x_1, \dots, x_{n-1} \rangle \subset R$.}
    \item \textup{$\# A_1 (f_{t_0})$ is the number of Morse points of a  $f_t$ outside $V(I)$, for a small and fixed $t_0$.}
    \item \textup{The ideal $Jac(f_t)=\langle \partial_1(f_t), \dots, \partial_n(f_t) \rangle \subset \mathbb{C}\{\underline{x},t\}$, where $\partial_i(f_t)$ is the partial derivative of $f_t$ with respect to $x_i$.  }
    \item\textup{The module $M(\mathfrak{a})=\sfrac{\mathfrak{a}}{Jac(f_t)}$,
    and $j_0(\mathfrak{a})=\dim_\mathbb{C} ( M(\mathfrak{a}) \otimes_{\mathbb{C}\{\underline{x},t\}} \sfrac{\mathbb{C}\{\underline{x},t\}}{\langle t \rangle } ) $, for every ideal $Jac(f_t) \subset \mathfrak{a}$. }
\end{enumerate}
\end{notation}

\begin{remark}\label{remark:jac_gen}
    
 \textup{Note that we have an isomorphism}
    \begin{equation*}
        (M(\mathfrak{a}) \otimes_{\mathbb{C}\{\underline{x},t\}} \sfrac{\mathbb{C}\{\underline{x},t\}}{\langle t \rangle } ) = \sfrac{\mathfrak{a}}{Jac(f_t) + \langle t \rangle \cdot \mathfrak{a}}. 
    \end{equation*}

\end{remark}

\begin{definition}
Let $f_t$ be a deformation of $f$. An ideal $Jac(f_t) \subset \mathfrak{a}$ is called a \textbf{generification of $Jac(f_t)$ over $\mathfrak{p}$} if 
\begin{enumerate}
    \item The ring $\sfrac{R}{\mathfrak{a}}$ is a Cohen-Macaulay ring of dimension $2$ such that  $t$ is not a zero-divisor of $\sfrac{R}{\mathfrak{a}}$.
    \item $\sqrt{\mathfrak{a}}=\mathfrak{p}$ such that $Jac(f_t)_\mathfrak{p} = \mathfrak{a}_\mathfrak{p}$. 
\end{enumerate}
\end{definition}


\begin{theorem}\label{thm:main}
Let $f_t$ be a relative morsification of $f$, and let $\mathfrak{a}$ be a generification of $Jac(f_t)$ over $\mathfrak{p}$. Then for every small $t_0$, 
\begin{equation*}
    j_0(\mathfrak{a})\geq  \# A_1 (f_{t_0}) +\deg (\Delta^\perp(f)). 
\end{equation*}
\end{theorem}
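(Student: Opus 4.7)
The plan is to analyze $M = \mathfrak{a}/Jac(f_t)$ as a finitely generated module over $\mathbb{C}\{t\}$ and exploit an upper semi-continuity argument to compare the fiber dimension at $t = 0$ (which equals $j_0(\mathfrak{a})$ by Remark~\ref{remark:jac_gen}) with that at a nearby $t = t_0 \neq 0$.

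First, I would show that $M$ is a finite $\mathbb{C}\{t\}$-module. The condition $Jac(f_t)_{\mathfrak{p}} = \mathfrak{a}_{\mathfrak{p}}$ forces $M_{\mathfrak{p}} = 0$, so $\text{supp}(M) \cap V(\mathfrak{p})$ is a proper closed subset of the two-dimensional plane $V(\mathfrak{p})$. Outside $V(\mathfrak{p})$, because $f_t$ is Morse outside $V(I)$ for small $t \neq 0$, the implicit function theorem applied to $\nabla_x f_t = 0$ produces a smooth $1$-parameter family of critical curves through each Morse point, and these curves together with the proper closed subset in $V(\mathfrak{p})$ exhaust $\text{supp}(M)$ in a neighborhood of the origin. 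Each piece projects finitely onto the $t$-axis, so $M$ is a finite $\mathbb{C}\{t\}$-module. By the structure theorem over the discrete valuation ring $\mathbb{C}\{t\}$, $M \cong \mathbb{C}\{t\}^r \oplus T$ with $T$ a finite-length torsion module. Then
\begin{equation*}
    j_0(\mathfrak{a}) = \dim_{\mathbb{C}} M/tM = r + \dim_{\mathbb{C}} T/tT \geq r = \dim_{\mathbb{C}} M/(t - t_0)M
\end{equation*}
for every sufficiently small $t_0 \neq 0$, since $t - t_0$ acts invertibly on $T$.

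It remains to bound $\dim_{\mathbb{C}} M/(t - t_0)M$ from below by decomposing along the support intersected with $\{t = t_0\}$, which splits into (i) the Morse points of $f_{t_0}$ outside $V(I)$ and (ii) the points on $V(\mathfrak{p}) \cap \{t = t_0\}$ lying over $\Delta^\perp(f_{t_0})$. In case (i), at a Morse point $q$ the ideal $\mathfrak{a}$ is locally the unit ideal, and the nondegenerate Hessian of $f_{t_0}$ at $q$ (via the implicit function theorem) shows that $V(Jac(f_t))$ is a smooth curve through $(q, t_0)$ parametrized by $t$. Hence $M_{(q, t_0)} \cong \mathbb{C}\{t - t_0\}$ and the local contribution to $\dim_{\mathbb{C}} M/(t - t_0)M$ is exactly $1$, so that summing over such points yields $\#A_1(f_{t_0})$. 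In case (ii), I would show that the local contribution at each point of $V(\mathfrak{p}) \cap \{t = t_0\}$ is bounded below by the local multiplicity of $\Delta^\perp(f_{t_0})$ at that point, using the Fitting-ideal description of $\Delta^\perp$ (Definition~\ref{def:trans_disc}) combined with the Cohen-Macaulay and $t$-regularity hypotheses on $R/\mathfrak{a}$. Since $\Delta^\perp(f_{t_0})$ is reduced by the definition of relative morsification, and $\deg(\Delta^\perp(f_{t_0})) = \deg(\Delta^\perp(f))$ by Remark~\ref{prop:disc_flat_defor}, the total contribution in (ii) is at least $\deg(\Delta^\perp(f))$. Summing (i) and (ii) and combining with the semi-continuity bound yields the theorem.

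The main obstacle I expect is case (ii): rigorously identifying the local length of $\mathfrak{a}_{t_0}/Jac(f_{t_0})$ at each point of $V(I) \cap \{t = t_0\}$ with (or at least bounding it below by) the local multiplicity of $\Delta^\perp(f_{t_0})$. This is where the generification hypotheses ($\sqrt{\mathfrak{a}} = \mathfrak{p}$, Cohen-Macaulayness, $t$-regularity, $\mathfrak{a}_{\mathfrak{p}} = Jac(f_t)_{\mathfrak{p}}$) must be used in a non-trivial way, likely via an explicit computation in blow-up coordinates in the spirit of Example~\ref{example:transversal} together with the Thom-Porteous-type arguments referenced in Remark~\ref{remark:trans_poly}.
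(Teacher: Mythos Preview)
Your overall strategy---semicontinuity of fiber dimension plus a local count on the generic fiber---matches the paper's architecture, and your treatment of the Morse-point contribution is correct. Two minor remarks. First, the paper proves that $M(\mathfrak{a})$ is Cohen--Macaulay of dimension $1$ (Lemma~\ref{lemma:M(f_t)_CM}, via Proposition~\ref{prop:quotient}) and uses this together with Proposition~\ref{prop:bigbigbig} to obtain an \emph{equality} $j_0(\mathfrak{a}) = \sum_p j(\mathfrak{a}, p, t_0)$; since only the inequality is needed for Theorem~\ref{thm:main}, your DVR structure-theorem argument is a legitimate shortcut. Second, as literally written ``$M/(t-t_0)M$'' vanishes for $t_0 \neq 0$ because $t - t_0$ is a unit in the local ring $R = \mathbb{C}\{\underline{x},t\}$; you must pass to a coherent-sheaf representative on an open polydisc (as in Notation~\ref{def:all_hail_his_satanic_majesty}) to make sense of the fiber at $t_0 \neq 0$.

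The real gap is case~(ii), which you yourself flag. You propose to compare the local length of $\mathfrak{a}/Jac(f_t)$ at a point $p \in \Delta^\perp(f_{t_0})$ directly with the multiplicity of $\Delta^\perp(f_{t_0})$, via the Fitting-ideal definition and blow-up/Thom--Porteous computations. The paper does \emph{not} do this, and such a direct comparison looks hard: $\Delta^\perp$ is built from the critical scheme of the blow-up map, while $M(\mathfrak{a})$ lives downstairs, and there is no evident morphism relating them. Instead the paper argues by contraposition (Lemma~\ref{lemma:j=0_and_disc}): if the local contribution $j(\mathfrak{a},p,t_0)$ were zero, then locally $Jac(f_{t_0})$ coincides with the generification, so $\mathbb{C}\{\underline{x}\}/Jac(f_{t_0})$ is Cohen--Macaulay near $p$; a fiberwise Milnor-number constancy argument (Proposition~\ref{prop:trans_disc_LR}) then invokes the L\^e--Ramanujam theorem to deduce topological equisingularity of the transversal slices near $p$, whence $\Delta^\perp(f_{t_0})$ is empty there---contradicting $p \in \Delta^\perp(f_{t_0})$. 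This detour through L\^e--Ramanujam is the missing ingredient, and it is exactly why the standing hypothesis $n \neq 4$ appears in this section.
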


Before proving Theorem~\ref{thm:main}, we start with a few lemmata and a few additional remarks and notations, some of which are quotes of known results from the literature. 

\begin{notation}\label{def:all_hail_his_satanic_majesty}

\textup{Let $f_t$ be a relative morsification of $f$ and let  $ \mathfrak{a}$ be a generification of $Jac(f_t)$ over $\mathfrak{p}$. Consider $F(\underline{x},t)=f_t(\underline{x})$ as an analytic function $F \colon U \to \mathbb{C}$ where $U \subset \mathbb{C}^{n+1}$ is a bounded small neighborhood of $0$ of the form $U =U_n \times U_1$, where $U_n \subset \mathbb{C}^n$ and $U_1 \subset \mathbb{C}^1$. Then for every $(p, t_0) \in U$ we define }
\begin{equation*}
    j(\mathfrak{a}, p, t_0)=\dim_\mathbb{C}(\pi_*(\mathcal{M}_{(p,t_0)})),
\end{equation*}
\textup{where $\mathcal{M}=\sfrac{\mathcal{I}_F}{\mathcal{J}_F}$, in which $\mathcal{J}_F$ is the ideal sheaf generated by $\frac{\partial F}{\partial x_1} \dots, \frac{\partial F}{\partial x_n}$, $\mathcal{I}_F=\mathfrak{a} \cdot \mathcal{O}_U$, and $\pi \colon (U \cap \text{Supp}(\mathcal{M})) \to U_1$ is the projection map.}
\end{notation}

\begin{remark}
    \textup{As with $j_0(\mathfrak{a})$, the invariant $j(\mathfrak{a},p, t_0)$ might be infinite. We address the finiteness of $j_0(\mathfrak{a})$ in Lemma~\ref{lemma:j_0_finite}, and of $j(\mathfrak{a},p, t_0)$ in Lemma~\ref{lemma:j_additive}. Yet in general, since we are looking at $\pi_*(\mathcal{M}_{(p,t_0)})$, we are looking at the push-forward of a one-dimensional stalk onto a one dimensional ring, $\mathbb{C}\{t\}$. }
\end{remark}

The following proposition, Proposition~\ref{prop:quotient}, is a restatement of a result which was originally presented as Proposition 1.17 in Part 2 of~\cite{pellikaanhypersurface}. This proposition plays a crucial role in the proof of Theorem~\ref{thm:main}, specifically as part of the proof of Lemma~\ref{lemma:M(f_t)_CM}. Originally it was presented in more generality by Artin and Nagata as Theorem 2.1 in~\cite{artin1972residual}, yet a counterexample to their general case and a correction has been presented by Huneke~\cite{huneke1983strongly} as Theorem 3.1, and the proof we present below was inspired by their results. \\

For more information on Cohen-Macaulay modules, regular sequences, and depth, see Section 17 of~\cite{eisenbud2013commutative}.  (Note that since our ring is local, by depth we mean depth of a module with respect to the corresponding maximal ideal.)


\begin{proposition}\label{prop:quotient}
Let $(S, \mathfrak{n})$ be a regular local ring and let $\mathfrak{a},\mathfrak{b} \subset S$ be ideals such that
\begin{enumerate}
    \item $\sfrac{S}{\mathfrak{a}}$ is a Cohen-Macaulay ring with $\dim(\sfrac{S}{\mathfrak{a}})=\dim(S)-r$,
    \item $\mathfrak{b}=\langle b_1, \dots, b_{r+1} \rangle \subsetneq \mathfrak{a}$,
    \item $\mathfrak{a}_\mathfrak{q}=\mathfrak{b}_\mathfrak{q}$ for every prime ideal $\mathfrak{q}$ such that $\dim(\sfrac{S}{\mathfrak{q}}) > \dim(S)-(r+1)$. 
\end{enumerate}
Then $\sfrac{\mathfrak{a}}{\mathfrak{b}}$ is a Cohen-Macaulay module over $S$, with
\begin{equation*}
    \dim (\text{Supp}(\sfrac{\mathfrak{a}}{\mathfrak{b}}))=\dim(S) - (r+1).  
\end{equation*}

\end{proposition}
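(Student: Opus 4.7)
The plan is to apply the depth lemma to the short exact sequence
\begin{equation*}
0 \to \mathfrak{a}/\mathfrak{b} \to S/\mathfrak{b} \to S/\mathfrak{a} \to 0
\end{equation*}
and combine the resulting depth estimate with the general inequality $\text{depth}(M) \leq \dim(M)$, so that the Cohen-Macaulay property and the dimension of the support fall out simultaneously.

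First I would rephrase condition (3) as $(\mathfrak{b} : \mathfrak{a})_{\mathfrak{q}} = S_{\mathfrak{q}}$ for every prime $\mathfrak{q}$ with $\text{ht}(\mathfrak{q}) \leq r$. Since $\text{Ann}(\mathfrak{a}/\mathfrak{b}) = \mathfrak{b} : \mathfrak{a}$, every associated prime of $\mathfrak{a}/\mathfrak{b}$ must then have height at least $r+1$, which yields the upper bound $\dim(\mathfrak{a}/\mathfrak{b}) \leq \dim(S) - (r+1)$.

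Next I would bound $\text{depth}(S/\mathfrak{b})$ from below by $d-(r+1)$, where $d = \dim(S)$. If $\text{ht}(\mathfrak{b}) = r+1$ this is immediate: $b_1, \dots, b_{r+1}$ is then a regular sequence in the regular (hence Cohen-Macaulay) ring $S$, and $S/\mathfrak{b}$ is itself Cohen-Macaulay of the right dimension. In the remaining case $\text{ht}(\mathfrak{b}) = r$ the quotient $S/\mathfrak{b}$ need not be Cohen-Macaulay, but condition (3) together with unmixedness of $\mathfrak{a}$ forces the primary decomposition of $\mathfrak{b}$ at each minimal prime of $\mathfrak{a}$ to agree with that of $\mathfrak{a}$, and a Koszul-complex analysis of $b_1, \dots, b_{r+1}$ then controls the remaining components enough to preserve the depth estimate. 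Once this holds, the depth lemma applied to the sequence above yields
\begin{equation*}
\text{depth}(\mathfrak{a}/\mathfrak{b}) \geq \min\bigl(\text{depth}(S/\mathfrak{b}),\, \text{depth}(S/\mathfrak{a}) + 1\bigr) \geq \min\bigl(d-(r+1),\, d-r+1\bigr) = d-(r+1),
\end{equation*}
using that $S/\mathfrak{a}$ is Cohen-Macaulay of dimension $d-r$. Combined with $\text{depth}(\mathfrak{a}/\mathfrak{b}) \leq \dim(\mathfrak{a}/\mathfrak{b}) \leq d-(r+1)$ from the first step, all three quantities must coincide with $d-(r+1)$, which gives both the Cohen-Macaulay conclusion and the asserted dimension of the support.

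The hard part will be the depth estimate for $S/\mathfrak{b}$ in the non-complete-intersection case $\text{ht}(\mathfrak{b}) = r$: this is precisely the subtlety that Artin-Nagata's original argument missed and that Huneke later corrected using strongly Cohen-Macaulay ideals, and I expect the bulk of the technical work to lie in a careful primary-decomposition argument ensuring that the embedded components of $\mathfrak{b}$ do not pull $\text{depth}(S/\mathfrak{b})$ below $d-(r+1)$.
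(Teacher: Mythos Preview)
Your overall strategy matches the paper's: reduce via the short exact sequence $0 \to \mathfrak{a}/\mathfrak{b} \to S/\mathfrak{b} \to S/\mathfrak{a} \to 0$ to the single estimate $\operatorname{depth}(S/\mathfrak{b}) \geq d-(r+1)$, with the dimension upper bound coming directly from condition~(3). But the crucial step is left as a promissory note, and there are two concrete issues.

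First, your case split is off. Since $\mathfrak{b} \subset \mathfrak{a}$ and condition~(3) forces $\mathfrak{b}$ and $\mathfrak{a}$ to agree at every prime of height $\leq r$, one always has $\operatorname{ht}(\mathfrak{b}) = r$; the case $\operatorname{ht}(\mathfrak{b}) = r+1$ never occurs. So your ``easy case'' is vacuous and you are always in the hard case. (The paper's dichotomy is instead whether $\mathfrak{b}$ can be generated by $r$ elements, i.e.\ is a complete intersection of height $r$.)

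Second, and more importantly, the depth bound in the hard case is not obtained by a Koszul or primary-decomposition argument of the sort you describe. The paper's mechanism is a linkage computation: choose a regular sequence $b_1,\dots,b_r$ among the generators (possible since grade equals height in a regular local ring), set $\mathfrak{c} = \langle b_1,\dots,b_r\rangle$, and identify $\mathfrak{b}/\mathfrak{c} \cong S/(\mathfrak{c}:\langle b_{r+1}\rangle) = S/(\mathfrak{c}:\mathfrak{b})$. Huneke's results on linkage (Propositions~1.6 and~1.7 of his paper) give that $S/(\mathfrak{c}:\mathfrak{a})$ is Cohen--Macaulay of dimension $d-r$ and that $b_{r+1}$ is a nonzerodivisor on it; this nonzerodivisor property is exactly what yields $(\mathfrak{c}:\mathfrak{b}) = (\mathfrak{c}:\mathfrak{a})$. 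One then depth-chases through $0 \to \mathfrak{c} \to \mathfrak{b} \to \mathfrak{b}/\mathfrak{c} \to 0$ and $0 \to \mathfrak{b} \to S \to S/\mathfrak{b} \to 0$. You correctly flag this as the Artin--Nagata subtlety, but the specific tool that closes the gap is the colon ideal $\mathfrak{c}:\mathfrak{a}$ and its Cohen--Macaulayness, not a direct control of the embedded primes of $\mathfrak{b}$.
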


\begin{proof}

First, since $\mathfrak{a}_\mathfrak{q}=\mathfrak{b}_\mathfrak{q}$ for every prime ideal $\mathfrak{q}$ such that $\dim(\sfrac{S}{\mathfrak{q}}) > \dim(S)-(r+1)$, then for every such prime ideal $\mathfrak{q}$ we have that $(\sfrac{\mathfrak{a}}{\mathfrak{b}})_{\mathfrak{q}}=0$. Therefore $\dim (\text{Supp}(\sfrac{\mathfrak{a}}{\mathfrak{b}})) \leq \dim(S) - (r+1)$, and so we would like the prove that the depth of $\sfrac{\mathfrak{a}}{\mathfrak{b}}$ is at least  $\dim(S) - (r+1)$. But, by looking at the exact sequence  $0 \to \sfrac{\mathfrak{a}}{\mathfrak{b}} \to \sfrac{S}{\mathfrak{b}} \to \sfrac{S}{\mathfrak{a}} \to 0$, we can conclude that it is enough to show that the depth of $\sfrac{S}{\mathfrak{b}}$ is at least $\dim(S) - (r+1)$. \\

Now, if $\mathfrak{b}$ is a complete intersection (i.e. can be generated by $r$ elements in $S$), then we would get that $\sfrac{S}{\mathfrak{b}}$ is a Cohen-Macaulay ring, and so $\text{depth}(\sfrac{S}{\mathfrak{b}}) =\dim(\sfrac{S}{\mathfrak{b}}) = \dim(S)-r$. Otherwise, since $\mathfrak{b}$ is generated by $r+1$ elements, namely $b_1, \dots, b_{r+1}$, we can assume without loss of generality that $b_1, \dots, b_r$ is an $S-$regular sequence. This is true since $S$ is regular and so the grade of $S$ over $\mathfrak{b}$ is exactly $ht(\mathfrak{b})=r$ (see Section 1.2 in~\cite{bruns1998cohen} or the introduction to Chapter 2 of~\cite{pellikaanhypersurface}). Denote $\mathfrak{c}=\langle b_1, \dots, b_r \rangle$. Since $\dim(\sfrac{S}{\mathfrak{a}})=\dim(S)-r$ and $\mathfrak{c} \subset \mathfrak{a}$, then $b_1, \dots, b_r$ is a maximal $S-$regular sequence in $\mathfrak{a}$. So, from Propositions 1.6 and 1.7 in~\cite{huneke1983strongly} we have that $\sfrac{S}{(\mathfrak{c} \colon \mathfrak{a})}$ is a Cohen-Macaulay ring of dimension $\dim(S)-r$ and $b_{r+1}$ is a non zero-divisor of $\sfrac{S}{(\mathfrak{c} \colon \mathfrak{a})}$. \\

Thus, if we look at the exact sequence of $S-$modules $0\to \mathfrak{c} \to S \to \sfrac{S}{\mathfrak{c}} \to 0$, since $\mathfrak{c}$ is generated by an $S-$regular of length $r$, then we get that $\text{depth}(\mathfrak{c}) \geq \dim(S) -r+1$ (where we view $\mathfrak{c}$ as an $S-$module). On  the other hand, we have the isomorphisms 
\begin{equation*}
    \sfrac{\mathfrak{b}}{\mathfrak{c}} \cong \sfrac{\langle b_{r+1} \rangle }{\langle b_{r+1} \rangle \cap \mathfrak{c}} \cong \sfrac{S}{(\mathfrak{c} \colon \langle b_{r+1} \rangle )} = \sfrac{S}{(\mathfrak{c} \colon \mathfrak{b} )}.  
\end{equation*}

Since $(\mathfrak{c} \colon \mathfrak{a}) \subset (\mathfrak{c} \colon \mathfrak{b})$, in addition to  $b_{r+1} (\mathfrak{c} \colon \langle b_{r+1} \rangle )=b_{r+1} (\mathfrak{c} \colon \mathfrak{b}) \subset \mathfrak{c} \subset (\mathfrak{c} \colon \mathfrak{a})$ and that $b_{r+1}$ is a non zero-divisor of $\sfrac{S}{(\mathfrak{c} \colon \mathfrak{a})}$, we can conclude that $(\mathfrak{c} \colon \mathfrak{a}) = (\mathfrak{c} \colon \mathfrak{b})$. Thus $\sfrac{\mathfrak{b}}{\mathfrak{c}} \cong \sfrac{S}{(\mathfrak{c} \colon \mathfrak{a})}$, which has depth $\dim(S)-r$. Therefore, if we look at the exact sequence $0 \to \mathfrak{c} \to \mathfrak{b} \to \sfrac{\mathfrak{b}}{\mathfrak{c}} \to 0$, we can conclude that $\text{depth}(\mathfrak{b}) \geq \dim(S) -r$ (where we view $\mathfrak{b}$ as an $S-$module). Thus, by looking at the exact sequence $0\to \mathfrak{b} \to S \to \sfrac{S}{\mathfrak{b}} \to 0$, we can conclude that $\dim(S) -(r+1) \leq \text{depth}(\sfrac{S}{\mathfrak{b}})$ and the result follows. 

\end{proof}

\begin{remark}\label{remark:boom_boom_paw}
\begin{enumerate}
    \item \textup{In Proposition~\ref{prop:quotient}, since $S$ is a regular local ring, if we look at the exact sequence of modules $0 \to \mathfrak{b} \to \mathfrak{a} \to \sfrac{\mathfrak{a}}{\mathfrak{b}} \to 0$ and recall that localization is an exact functor, then condition $3$ is in fact equivalent to the condition that $\dim (\text{Supp}(\sfrac{\mathfrak{a}}{\mathfrak{b}})) \leq \dim(S) - (r+1)$. This gives us an alternative proof of Proposition 3.3 in~\cite{pellikaan1988projective} (in the case where $S$ is a regular local ring). }
    \item \textup{Since $\mathfrak{b}$ can be generated by $r+1$ and $\dim(\sfrac{S}{\mathfrak{b}})=\dim(S)-r$, then if $\mathfrak{b}$ is not a complete intersection, it is an almost complete intersection. The connection between almost complete intersections and non-isolated singularities (in the real case, in addition to their connection to Pellikaan's work) have been studied by van Straten and Warmt in~\cite{van2015gorenstein}. }
\end{enumerate}

\end{remark}

\begin{lemma}\label{lemma:M(f_t)_CM}
Let $f_t$ be a relative morsification of $f$, and let  $ \mathfrak{a}$ be a generification of $Jac(f_t)$ over $\mathfrak{p}$. Then  $M(\mathfrak{a})$ is a Cohen-Macauly module over $R$. 
\end{lemma}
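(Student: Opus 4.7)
The strategy is to apply Proposition~\ref{prop:quotient} with $S = R$, $\mathfrak{a} = \mathfrak{a}$, and $\mathfrak{b} = Jac(f_t)$, which would directly give that $M(\mathfrak{a}) = \mathfrak{a}/Jac(f_t)$ is Cohen-Macaulay. The ring $R = \mathbb{C}\{\underline{x},t\}$ is regular local of dimension $n+1$, and by the first condition in the definition of generification $R/\mathfrak{a}$ is Cohen-Macaulay of dimension $2 = (n+1) - (n-1)$, so I take $r = n-1$. The ideal $Jac(f_t) = \langle \partial_1 f_t, \ldots, \partial_n f_t\rangle$ has $n = r+1$ generators and lies in $\mathfrak{a}$ (the case $Jac(f_t) = \mathfrak{a}$ is trivial since then $M(\mathfrak{a}) = 0$).

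The only hypothesis requiring geometric input is condition 3 of Proposition~\ref{prop:quotient}: $\mathfrak{a}_\mathfrak{q} = Jac(f_t)_\mathfrak{q}$ for every prime $\mathfrak{q}$ with $\dim(R/\mathfrak{q}) > \dim(R) - (r+1) = 1$. By the first item of Remark~\ref{remark:boom_boom_paw}, this is equivalent to showing $\dim \text{Supp}(M(\mathfrak{a})) \leq 1$. Since $Jac(f_t)$ annihilates $M(\mathfrak{a})$, the support lies inside $V(Jac(f_t))$, so I would analyze the components of the latter.

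Using that $f_t$ is a relative morsification, for every small $t_0$ the locus $\Crit(f_{t_0}) \subset \mathbb{C}^n$ decomposes as $V(I)$ together with the finitely many Morse critical points of $f_{t_0}$ lying outside $V(I)$. Assembling these fiberwise in a small cylinder around the origin shows that $V(Jac(f_t)) = V(\mathfrak{p}) \cup C$, where $V(\mathfrak{p}) = V(I) \times \mathbb{C}_t$ has dimension $2$ and $C$ is a $1$-dimensional union of curves tracing the Morse points as $t$ varies. The $2$-dimensional component $V(\mathfrak{p})$ is not contained in $\text{Supp}(M(\mathfrak{a}))$, because the second condition in the definition of generification gives $Jac(f_t)_\mathfrak{p} = \mathfrak{a}_\mathfrak{p}$, so the generic point of $V(\mathfrak{p})$ is not in the support. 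Hence $\text{Supp}(M(\mathfrak{a})) \cap V(\mathfrak{p})$ is a proper closed subset of the irreducible $2$-dimensional $V(\mathfrak{p})$, so has dimension $\leq 1$, while the remaining components of $V(Jac(f_t))$ are the curves in $C$, of dimension $1$.

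The main (and essentially only) obstacle is making the geometric claim above rigorous, namely that the components of $V(Jac(f_t))$ not contained in $V(\mathfrak{p})$ are at most $1$-dimensional. This uses the Morse-outside-$V(I)$ condition from Definition~\ref{def:morsification} together with the flatness of $f_t$, ensuring that the Morse critical points for nonzero $t_0$ sweep out curves in the base of the family and that this persists over a sufficiently small neighborhood of the origin in the $t$-direction. Once $\dim \text{Supp}(M(\mathfrak{a})) \leq 1$ is established, Proposition~\ref{prop:quotient} applies and yields that $M(\mathfrak{a})$ is Cohen-Macaulay over $R$, as desired.
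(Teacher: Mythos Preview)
Your proposal is correct and follows essentially the same approach as the paper: reduce to Proposition~\ref{prop:quotient} (with $r=n-1$) via the equivalence in Remark~\ref{remark:boom_boom_paw}, and establish $\dim\text{Supp}(M(\mathfrak{a}))\le 1$ by combining the generification condition $Jac(f_t)_\mathfrak{p}=\mathfrak{a}_\mathfrak{p}$ with the fact that, because $f_t$ is a relative morsification, the fibers of $V(Jac(f_t))$ over $t_0\neq 0$ consist of $V(I)$ plus finitely many Morse points. The paper carries out the dimension bound slightly differently---first bounding $\dim(R/Jac(f_t))\le 2$ algebraically from $Jac(f_t)+\langle t\rangle = Jac(f)\cdot R+\langle t\rangle$ (using the special form $f_t=f+tg$ from Theorem~\ref{prop:morsif}), and then excluding any further $2$-dimensional component via the fiberwise argument---but the substance is the same.
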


\begin{proof}
Let $f \in I^p \setminus I^{p+1}$ and let $f_{t}=f+tg$ be a relative morsification of $f$ (which exists by Theorem~\ref{prop:morsif}). If $M(\mathfrak{a}) =0$ we are done, so we can assume that $M(\mathfrak{a})\neq 0$. We first show that $\dim(\text{Supp}(M(\mathfrak{a}))) \leq 1$. \\

First, since we can write $f_t =f+tg$ where $f,g \in \mathbb{C}\{\underline{x}\}$, we have that $Jac(f_t) +\langle t \rangle =Jac(f) \cdot R  +\langle t \rangle$. Therefore, we have that 
\begin{equation*}
    \sfrac{R}{Jac(f_t)} \otimes\sfrac{R}{\langle t \rangle } =\sfrac{R}{Jac(f) \cdot R  +\langle t \rangle} \cong \sfrac{\mathbb{C}\{\underline{x}\}}{Jac(f)},
\end{equation*}

where $Jac(f)=\langle \partial_1(f), \dots, \partial_n(f) \rangle \subset \mathbb{C}\{\underline{x}\}$, which gives us that

\begin{equation*}
     \dim(\sfrac{R}{Jac(f_t)}) \leq \dim(\sfrac{\mathbb{C}\{\underline{x}\}}{Jac(f)}) +1 =2, 
\end{equation*}

 as $V(Jac(f)) =\Sing(V(f))$. Yet, because $f_t$ is a morsification, we have that $Jac(f_t) \subset \mathfrak{p}$ and so we can conclude that $\dim(\sfrac{R}{Jac(f_t)}) \geq \dim(\sfrac{R}{\mathfrak{p}}) =2$. Therefore we have that $ \dim(\sfrac{R}{Jac(f_t)})=2$. Now, from the exact sequence $0 \to M(\mathfrak{a}) \to \sfrac{R}{Jac(f_t)} \to \sfrac{R}{\mathfrak{a}}\to 0$ we can conclude that $\dim(\text{Supp}(M(f_t))) \leq  \dim(\text{Supp}(\sfrac{R}{Jac(f_t)})) = 2$, and since we know that $Jac(f_t)_\mathfrak{p}=\mathfrak{a}_\mathfrak{p}$, we get that $\mathfrak{p} \notin \text{Supp}(M(\mathfrak{a}))$. \\

 Following Notation~\ref{def:all_hail_his_satanic_majesty}, consider the quotient sheaf $\sfrac{\mathcal{O}_U}{\mathcal{J}_F}$ and the sheaf $\mathcal{M}$. Note that $\mathcal{M}_0 = M(\mathfrak{a})$ and $(\sfrac{\mathcal{O}_U}{\mathcal{J}_F})_0 = \sfrac{R}{Jac(f_t)}$. It is enough to show that $\dim(\text{Supp}(\mathcal{M})) \leq 1$ as a complex germ, since we have that  $\dim(\text{Supp}(M(\mathfrak{a}))) = \dim(\text{Supp} (\mathcal{M}_0)) \leq  \dim(\text{Supp}(\mathcal{M}))$. \\
 
 Since $V(\mathfrak{p}) \subset V(Jac(f_t))$ we have that $V(x_1, \dots, x_{n-1}) \subset \text{Supp}(\sfrac{\mathcal{O}_U}{\mathcal{J}_F})$. Therefore, because we have that $\text{Supp}(\mathcal{M})$ is a closed subset of $\text{Supp}(\sfrac{\mathcal{O}_U}{\mathcal{J}_F})$ with $\dim( \text{Supp}(\sfrac{\mathcal{O}_U}{\mathcal{J}_F})) =2$,  we need to prove that $\text{Supp}(\sfrac{\mathcal{O}_U}{\mathcal{J}_F})$ contains no irreducible components of dimension $2$ except for $V(x_1, \dots, x_{n-1})$.\\
 
 Yet, since $f_t$ is a relative morsification of $f_t$, then for every small enough $t_0$ we have that $(\Crit(f_{t_0})  \cap U_n) \setminus (V(x_1, \dots, x_{n-1}))$ is composed of a finite set of discrete points (which are exactly the Morse points of $f_{t_0}$). Thus for every small enough $t_0$ we have that $( \text{Supp}(\sfrac{\mathcal{O}_U}{\mathcal{J}_F}|_{\pi^{-1}(t_0)}) \cap U_n) \setminus (V(x_1, \dots, x_{n-1}))$ is zero dimensional and from Exercise 1.3.2 in~\cite{greuel2007introduction} we can conclude that indeed $\text{Supp}(\sfrac{\mathcal{O}_U}{\mathcal{J}_F})$ contains no irreducible components of dimension $2$ except for $V(x_1, \dots, x_{n-1})$.\\

Now, since $\dim(\text{Supp}(M(\mathfrak{a}))) \leq 1$, $\sfrac{R}{\mathfrak{a}}$ is a Cohen-Macaulay ring of dimension $2$, and $Jac(f_{t})=\langle \partial_1(f_{t}), \dots, \partial_n(f_{t}) \rangle$, we can apply Proposition~\ref{prop:quotient} and the first part of Remark~\ref{remark:boom_boom_paw} where $\mathfrak{b}=Jac(f_t)$, $\dim(R)=n+1$, and $r=n-1$, and get that $M(\mathfrak{a})$ is Cohen-Macaulay over $R$ with $\dim(\text{Supp}_R(M(\mathfrak{a})))=1$, as desired. \\

\end{proof}

\begin{remark}\label{rem:24}
\begin{enumerate}
    \item \textup{Looking at the second part of the proof of Proposition~\ref{lemma:M(f_t)_CM} from a geometric point of view, the irreducible decomposition of $V(Jac(f_t))$ is of the form $V(\mathfrak{p}) \cup V(\mathfrak{q}_1) \cup \cdots \cup V(\mathfrak{q}_s)$ where $V(\mathfrak{q}_i)$ is a curve for every $i$ and $s$ is exactly the number of $A_1$ points in this morsification. This statement is elaborated in the proof of Theorem~\ref{thm:main}.}
    \item  \textup{In Lemma~\ref{lemma:M(f_t)_CM}, the fact that $f_t$ is a relative morsification plays a crucial role here. For example, $f_t(x,y,z)=x^{p+1}+y^p+tx^p$ is not a relative morsification of $f(x,y,z)=x^{p+1}+y^p$ (as we saw in Example~\ref{ex:def_mors_stuff}) and $\text{Supp}(\sfrac{\mathcal{O}_U}{\mathcal{J}_F})$ has more than one irreducible component of dimension $2$.}
\end{enumerate}

\end{remark}

The following proposition, Proposition~\ref{prop:bigbigbig},  plays a crucial role in the proof of Theorem~\ref{thm:main}, Theorem~\ref{thm:delta_trans_bound}, and other related results. It is a restatement of Proposition 7.2 from
\cite{pellikaanhypersurface} and of Theorem 6.4.7 in~\cite{de2013local}.

\begin{proposition}\label{prop:bigbigbig}
Let $U \subset \mathbb{C}^{n+1}$ be a small open neighborhood of the origin of the form $U=U_1 \times U_n$ where $U_1 \subset\mathbb{C}_t$ and $U_n \subset \mathbb{C}^n_{\underline{x}}$ are both small open neighborhood of their respect origins. Denote by $\pi \colon U \to U_1$ be the projection map. Let $\mathcal{N}$ be a coherent $\mathcal{O}_U$-module such that $\mathcal{N}_0$, the stalk of $\mathcal{N}$ at the origin, is Cohen-Macaulay over $\mathbb{C}\{\underline{x},t\}$ of dimension $1$ and of finite rank over $\mathbb{C}\{t\}$. Then for every small $t_0$ we have that 
\begin{equation*}
    \dim_\mathbb{C} ((\mathcal{N}|_{\pi^{-1}(0)})_0)=\sum_{s \in \pi^{-1}(t_0)} \dim_\mathbb{C} ((\mathcal{N}|_{\pi^{-1}(t_0)})_p).
\end{equation*}

Here the fibre of the module is defined via the restriction with respect to $U_1$, that is, $\mathcal{N}|_{\pi^{-1}(t_0)}:= \mathcal{N}|_{\pi^{-1}(t_0)} \otimes \sfrac{\mathcal{O}_U}{ \langle t-t_0 \rangle}$ for $t_0 \in U_1$. 

\end{proposition}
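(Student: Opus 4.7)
The plan is to reduce the statement to the claim that $\mathcal{N}_0$ is a free $\mathbb{C}\{t\}$-module of some rank $r$, and then to extract the fibre formula from the standard conservation of degree for locally free sheaves on a curve. First I would show that $\mathcal{N}_0$ is torsion-free over $\mathbb{C}\{t\}$: if the $t$-torsion submodule $T \subset \mathcal{N}_0$ were nonzero, it would admit an associated prime $\mathfrak{p} \subset \mathbb{C}\{\underline{x}, t\}$ containing $t$, which is then also an associated prime of $\mathcal{N}_0$. By the Cohen-Macaulay hypothesis of pure dimension $1$, the corresponding component $V(\mathfrak{p}) \subset \text{Supp}(\mathcal{N}_0)$ is one-dimensional and contained in the hyperplane $\{t=0\}$, so it projects to the single point $0 \in U_1$. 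This contradicts the finiteness of $\text{Supp}(\mathcal{N}_0) \to \text{Spec}(\mathbb{C}\{t\})$, whose fibres must be zero-dimensional. Hence $T = 0$, and since $\mathcal{N}_0$ is finitely generated and torsion-free over the DVR $\mathbb{C}\{t\}$, it is free of some rank $r \geq 0$.

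The left-hand side is then immediate: near the origin of $U$, finiteness forces $\pi^{-1}(0) \cap \text{Supp}(\mathcal{N})$ to reduce to $\{0\}$, so $(\mathcal{N}|_{\pi^{-1}(0)})_0 = \mathcal{N}_0/t\mathcal{N}_0$ has $\mathbb{C}$-dimension $r$. For the right-hand side I would pass to the pushforward: since $\pi$ is finite on $\text{Supp}(\mathcal{N})$, the sheaf $\pi_*\mathcal{N}$ is coherent on $U_1$, and its stalk at $0$ equals $\mathcal{N}_0$. Coherence propagates freeness at $0$ to local freeness of rank $r$ on a neighbourhood of $0 \in U_1$, so every fibre $(\pi_*\mathcal{N}) \otimes_{\mathcal{O}_{U_1}} \mathbb{C}\{t\}/(t-t_0)$ has $\mathbb{C}$-dimension $r$ for all small $t_0$.

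For any such $t_0$, the intersection $\pi^{-1}(t_0) \cap \text{Supp}(\mathcal{N})$ is a finite set $\{s_1, \ldots, s_k\}$, so $\mathcal{N}|_{\pi^{-1}(t_0)}$ is a coherent sheaf with zero-dimensional support that splits canonically as $\bigoplus_{i=1}^k (\mathcal{N}|_{\pi^{-1}(t_0)})_{s_i}$; this direct sum agrees with the fibre of $\pi_*\mathcal{N}$ at $t_0$, yielding the asserted equality with both sides equal to $r$. The main obstacle I expect is the freeness step: combining the Cohen-Macaulay and finiteness hypotheses to exclude associated primes contained in $\{t = 0\}$ is the one place where both assumptions genuinely interact, and weakening either collapses the argument. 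Once freeness is established, the rest is a routine application of coherence of finite pushforwards.
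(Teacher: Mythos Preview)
Your argument is correct and is essentially the standard one. The paper does not supply its own proof of this proposition: it is quoted as a restatement of Proposition~7.2 in Pellikaan's thesis and Theorem~6.4.7 in de~Jong--Pfister, so there is no in-paper argument to compare against. What you have written is precisely the mechanism underlying those references: the Cohen--Macaulay hypothesis forces every associated prime of $\mathcal{N}_0$ to be one-dimensional, finiteness over $\mathbb{C}\{t\}$ forbids any such prime from lying inside $\{t=0\}$, hence $t$ is $\mathcal{N}_0$-regular and $\mathcal{N}_0$ is free over the DVR $\mathbb{C}\{t\}$; coherence of the finite pushforward then spreads this freeness to a neighbourhood and yields constancy of fibre dimension. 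One small point worth making explicit: to get that $\pi_*\mathcal{N}$ is coherent on $U_1$ (not just that its stalk at $0$ is free), you should shrink $U$ so that $\pi$ restricted to $\mathrm{Supp}(\mathcal{N})$ is finite over all of $U_1$, which is guaranteed near the origin by the same finiteness hypothesis and the local finiteness theorem for analytic maps. With that caveat, your proposal is complete.
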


\begin{remark}\label{rem:Le}
    \textup{The following result, Proposition~\ref{prop:trans_disc_LR}, relies heavily on the L\^{e}-Ramanujan theorem, which is a result from~\cite{trang1976invariance}, and plays a crucial role in understanding how the hypersurface with an empty transversal discriminant behaves. This result is an important component in the proof of Theorem~\ref{thm:main}, and especially in the proof of Lemma~\ref{lemma:j=0_and_disc}. As the L\^{e}-Ramanujan theorem is still open in dimension $3$, we assume that $n-1 \neq 3$. For a deeper discussion on this theorem and related results, see article titled "Equisingularity" in~\cite{dos2018singularities}. }
\end{remark}

\begin{proposition}\label{prop:trans_disc_LR}
If $\sfrac{\mathbb{C}\{\underline{x}\}}{Jac(f)}$ is Cohen-Macaulay over $\mathbb{C}\{\underline{x}\}$ then $\Delta^\perp(f)=\emptyset$. 
\end{proposition}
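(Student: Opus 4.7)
The plan is to use the Lê-Ramanujam theorem to bridge the algebraic hypothesis to the topological conclusion, passing through the intermediate statement that the Milnor numbers of the transversal slices of $f$ are constant. The two endpoints are then tied together by Remark~\ref{rem:top_trans_disc}, which characterizes emptiness of $\Delta^\perp(f)$ by topological equisingularity of the transversal sections of $V(f)$ along $V(I)$.

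The first step is to extract the geometric content of the Cohen-Macaulay hypothesis. Since $\Sing(V(f))=V(I)$ is a smooth curve, the module $M := \sfrac{\mathbb{C}\{\underline{x}\}}{Jac(f)}$ is supported on $V(I)$. Cohen-Macaulayness of dimension one then forces $M$ to be torsion-free as a module over the coordinate ring $\mathbb{C}\{x_n\}$ of $V(I)$, and since $\mathbb{C}\{x_n\}$ is a discrete valuation ring, $M$ is in fact free of finite rank. Its rank equals the length of $M$ at the generic point of $V(I)$, namely the Milnor number of the generic transversal slice; under the ordinary multiple point assumption this rank is $r = (p-1)^{n-1}$.

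The second step is to transfer this freeness to constancy of the transversal Milnor number. Setting $f_c(x_1,\ldots,x_{n-1}) := f(x_1,\ldots,x_{n-1},c)$, freeness of $M$ gives $\dim_\mathbb{C} M/(x_n-c)M = r$ for every small $c$. Since $V(Jac(f)) = V(I)$ set-theoretically, the fibre $M/(x_n-c)M$ is the colength of the ideal $J_c := Jac(f)+\langle x_n-c\rangle$ in $\mathbb{C}\{x_1,\dots,x_{n-1}\}$, localized at $(0,\dots,0,c)$. Because $J_c \supset Jac'(f_c) := \langle \partial_1 f_c,\dots,\partial_{n-1} f_c\rangle$, one gets $r \leq \mu(f_c)$. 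Combined with upper semicontinuity of the Milnor number and the fact that the generic value of $\mu(f_c)$ also equals $(p-1)^{n-1} = r$, I would argue that $\mu(f_c)$ is constant equal to $r$ for all small $c$.

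The final step is to invoke the Lê-Ramanujam theorem (cited in Remark~\ref{rem:Le}) and conclude that the $\mu$-constant family $\{f_c\}$ of isolated hypersurface singularities in $\mathbb{C}^{n-1}$ is topologically trivial; this is where the standing assumption $n\neq 4$ (equivalently $n-1\neq 3$) is needed. Topological triviality gives that any two transversal sections of $V(f)$ along $V(I)$ are homeomorphic, which by Remark~\ref{rem:top_trans_disc} is precisely the statement $\Delta^\perp(f)=\emptyset$. The main obstacle is ensuring that the inequality $r \leq \mu(f_c)$ is in fact an equality at every special value of $c$: a priori the extra generator $\partial_n f|_{x_n=c}$ could strictly enlarge $J_c$ beyond $Jac'(f_c)$, allowing $\mu(f_c)$ to exceed $r$. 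Ruling this out under the Cohen-Macaulay hypothesis is the crux, and should require a closer analysis relating $\partial_n f$ to the relative Jacobian $\langle \partial_1 f,\ldots,\partial_{n-1} f\rangle$ in a neighborhood of each special point, exploiting both the freeness of $M$ over $\mathbb{C}\{x_n\}$ and the ordinary multiple point normal form at the generic point of $V(I)$.
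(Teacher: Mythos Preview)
Your overall strategy coincides exactly with the paper's: pass from the Cohen--Macaulay hypothesis to constancy of the fibre dimension of $\mathbb{C}\{\underline{x}\}/Jac(f)$ over $\mathbb{C}\{x_n\}$ via Proposition~\ref{prop:bigbigbig}, identify that fibre dimension with $\mu(f_c)$, and then invoke L\^e--Ramanujam together with Remark~\ref{rem:top_trans_disc}. The paper does not argue any differently.

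The obstacle you isolate at the end is genuine, and your semicontinuity remark does not resolve it: upper semicontinuity only gives $\mu(f_0)\ge \mu(f_c)$ for nearby $c$, so together with $r\le \mu(f_c)$ and the generic equality $\mu(f_c)=(p-1)^{n-1}$ you never obtain an \emph{upper} bound for $\mu(f_0)$. What you need is that $(\partial_n f)\!\mid_{x_n=c}$ already lies in $\langle \partial_1 f_c,\dots,\partial_{n-1}f_c\rangle$ for every small $c$, not just generic ones. The paper's write-up glosses over precisely this point: it simply asserts the isomorphism $(\mathcal{N}\!\mid_{\pi^{-1}(z_0)})_{(0,\dots,0,z_0)}\cong \mathbb{C}\{x_1,\dots,x_{n-1}\}/Jac(f_{z_0})$, silently dropping the extra generator $(\partial_n f)\!\mid_{x_n=z_0}$ from the fibre ideal. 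So you have been more scrupulous than the paper here, not less.

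If you want to close the gap rather than mimic the paper's shortcut, one route is to bring in the second Cohen--Macaulay module $M'=\mathbb{C}\{\underline{x}\}/J_{n-1}(f)$ from Proposition~\ref{prop:iso_sing_f+x_n^k}, whose fibre at $c$ is \emph{honestly} $\mu(f_c)$. From the exact sequence $0\to N\to M'\to M\to 0$ with $N=Jac(f)/J_{n-1}(f)$ cyclic, the depth lemma and the hypothesis on $M$ force $N$ to be Cohen--Macaulay of dimension $\le 1$; moreover $N$ is not supported at the generic point of $V(I)$ because the generic transversal type is constant (so, locally near a generic point of $V(I)$, $f$ is analytically a function of $x_1,\dots,x_{n-1}$ only, giving $\partial_n f\in J_{n-1}(f)$ there --- cf.\ Remark~\ref{remark:delta_zero_trans}). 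Feeding $N$ and $M$ into Proposition~\ref{prop:bigbigbig} alongside $M'$ then lets you compare fibre dimensions along $V(I)$ and extract the constancy of $\mu(f_c)$.
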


\begin{proof}
Following the convention of Notation~\ref{def:all_hail_his_satanic_majesty}, we consider $f$ as an analytic function $f \colon U \to \mathbb{C}$, where $U \subset \mathbb{C}^n$ is a small bounded open set of the form $U=U_{n-1} \times U_1$ where $U_{n-1} \subset \mathbb{C}^{n-1}$ and $U_1 \subset V(I)$, where we view $V(I)$ as a copy of $\mathbb{C}^1$. Consider the quotient sheaf $\mathcal{N}=\sfrac{\mathcal{O}_U}{\mathcal{J}}$, where $\mathcal{J}$ is the sheaf generated by $\frac{\partial f}{\partial x_1}, \dots, \frac{\partial f}{\partial x_n}$, and let $\pi \colon U \to U_1$ be the projection map. \\

Since $(\sfrac{\mathcal{O}_U}{\mathcal{J}})_0 = \sfrac{\mathbb{C}\{\underline{x}\}}{Jac(f)}$ is a Cohen-Macaulay module, then by Proposition~\ref{prop:bigbigbig} we have that, up to shrinking $U$, the stalks of $(\pi_* \mathcal{N})|_{\tilde{U}_1}$ are free over $\mathbb{C}\{x_n\}$ and of the same rank. Note that for every $z_0 \in U_1$ we have that as module over $\mathbb{C}\{x_n\}$,
\begin{equation*}
    ((\mathcal{N}|_{\pi^{-1}(z_0)})_{(0, \dots, 0, z_0)} \cong \sfrac{\mathbb{C}\{x_1, \dots, x_{n-1}\}}{Jac(f_{z_0})},
\end{equation*}

 where $f_{z_0}(x_1, \dots, x_{n-1})=f(x_1, \dots, x_{n-1}, z_0) \in \mathbb{C}\{x_1, \dots, x_{n-1}\}$ and where $Jac(f_{x_0})$ is the ideal generated by the derivatives of $f_{z_0}$ with respect to the variables $x_1, \dots, x_{n-1}$. We can conclude that for every $z_1,z_2 \in U_1$,
\begin{equation*}
    \mu(f_{z_1}) = \dim_\mathbb{C} (\pi_* ((\mathcal{N}|_{\pi^{-1}(z_1)})_{(0, \dots, 0, z_1)})=\dim_\mathbb{C} ((\mathcal{N}|_{\pi^{-1}(z_2)})_{(0, \dots, 0, z_2)}) =\mu(f_{z_2}).
\end{equation*}
 Thus the Milnor number of $f_{z_0}$ does not on the choice of $z_0 \in {U}_1$, and the result follows from the L\^{e}-Ramanujan theorem (recall that $n-1 \neq 3$) and Remark~\ref{rem:top_trans_disc}. 
\end{proof}

\begin{remark}\label{rem:counterex}
 \textup{The converse of of Proposition~\ref{prop:trans_disc_LR} is not true. For example, take $f(x,y,z)=2k(x^{3k} +y^{3k}) -3k z^2 x^{2k} y^{2k}$ where $k \geq 2$. Then  $\Delta^\perp(f)=\emptyset$, but $S=\sfrac{\mathbb{C}\{x,y,z\}}{Jac(f)}$ is not a Cohen-Macaulay module, since }
\begin{equation*}
    Jac(f) = \langle x^{3k-1}-z^2x^{2k-1}y^{2k}, y^{3k-1} -z^2 x^{2k}y^{2k-1}, z x^{2k} y^{2k} \rangle
\end{equation*}

\textup{and by direct computation we get that $\dim(S)=1$ but $\text{depth}(S)=0$, as both $x,y$ and $z$ are all zero divisors of $S$.}

\end{remark}

\begin{lemma}\label{lemma:j=0_and_disc}
Let $f \in I^p \setminus I^{p+1}$ with $\Sing(V(f))=V(I)$, and let $\mathfrak{a}$ be a generification of $Jac(f_t)$ over $\mathfrak{p}$. 
\begin{enumerate}
    \item For every relative morsification of $f$,  $j_0(\mathfrak{a})=0$ if and only if $M(\mathfrak{a}) = 0$.
    \item If there exists some relative morsification $f_t$ of $f$ such that $j_0(\mathfrak{a})=0$, then $\Delta^\perp(f)=\emptyset$. 
\end{enumerate}
\end{lemma}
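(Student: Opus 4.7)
The plan splits naturally along the two items, with Part~2 reducing to Part~1. For Part~1, the direction $M(\mathfrak{a})=0 \Rightarrow j_0(\mathfrak{a})=0$ is immediate from the identification in Remark~\ref{remark:jac_gen}. For the converse, I would invoke Lemma~\ref{lemma:M(f_t)_CM} to conclude that $M(\mathfrak{a})$ is Cohen--Macaulay over the local ring $(R,\mathfrak{m})$, and in particular is finitely generated. The hypothesis $j_0(\mathfrak{a})=0$, together with Remark~\ref{remark:jac_gen}, translates to $tM(\mathfrak{a})=M(\mathfrak{a})$. Since $t\in\mathfrak{m}$, applying the standard determinant-trick form of Nakayama's lemma (whereby $tM=M$ for finitely generated $M$ over a local ring with $t$ in the maximal ideal forces $M=0$) yields $M(\mathfrak{a})=0$.

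For Part~2, combining Part~1 with the hypothesis $j_0(\mathfrak{a})=0$ gives $M(\mathfrak{a})=0$, i.e., $\mathfrak{a}=Jac(f_t)$. The definition of a generification now tells us that $R/Jac(f_t)=R/\mathfrak{a}$ is Cohen--Macaulay of dimension~$2$ and that $t$ is a nonzero-divisor on it. Taking the quotient by the regular element $t$ preserves the Cohen--Macaulay property and drops the dimension by one. Since the generators $\partial_i(f_t)$ of $Jac(f_t)$ specialize at $t=0$ to the generators $\partial_i(f)$ of $Jac(f)$, I obtain the natural identification $R/(Jac(f_t)+\langle t\rangle)\cong \mathbb{C}\{\underline{x}\}/Jac(f)$. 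Hence $\mathbb{C}\{\underline{x}\}/Jac(f)$ is Cohen--Macaulay, and Proposition~\ref{prop:trans_disc_LR} (which requires the blanket assumption $n\neq 4$) concludes $\Delta^\perp(f)=\emptyset$.

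The main subtlety is justifying the Nakayama step: specifically, that Lemma~\ref{lemma:M(f_t)_CM} delivers the necessary finite generation of $M(\mathfrak{a})$ over $R$, so that $t\in\mathfrak{m}$ satisfying $tM(\mathfrak{a})=M(\mathfrak{a})$ genuinely forces the module to vanish rather than merely being $t$-divisible. Everything after that is a direct consequence of the generification axioms and Proposition~\ref{prop:trans_disc_LR}, with no further computation required.
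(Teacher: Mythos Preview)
Your proof is correct, and Part~2 matches the paper's argument essentially verbatim. For Part~1, however, you take a slightly different (and more elementary) route than the paper. The paper argues by contraposition via supports: if $M(\mathfrak{a})\neq 0$, then Lemma~\ref{lemma:M(f_t)_CM} forces $\dim(\operatorname{Supp}(M(\mathfrak{a})))=1$, so $\operatorname{Supp}(M(\mathfrak{a}))\cap V(\langle t\rangle)$ must contain the maximal ideal $\mathfrak{m}$, contradicting $M(\mathfrak{a})\otimes_R R/\langle t\rangle=0$. Your Nakayama argument is cleaner and in fact does not even require Lemma~\ref{lemma:M(f_t)_CM}: finite generation of $M(\mathfrak{a})=\mathfrak{a}/Jac(f_t)$ is automatic because $R=\mathbb{C}\{\underline{x},t\}$ is Noetherian, so $tM(\mathfrak{a})=M(\mathfrak{a})$ with $t\in\mathfrak{m}$ already gives $M(\mathfrak{a})=0$. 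The paper's approach, by contrast, genuinely uses the Cohen--Macaulay structure to pin down the dimension; yours bypasses this entirely for Part~1, which is a small gain in economy.
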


\begin{proof}
\begin{enumerate}
    \item Let $f_t$ be a relative morsification of $f$. First, if $M(\mathfrak{a}) = 0$ then obviously $j_0(\mathfrak{a})=0$. Second, assume that $j_0(\mathfrak{a})=0$. Then $M(\mathfrak{a}) \otimes_{R} \sfrac{R}{\langle t \rangle}=0$, and thus $\text{Supp}(M(\mathfrak{a})) \cap V(\langle t \rangle) = \emptyset$. We can conclude that $M(\mathfrak{a})=0$, since otherwise, by Lemma~\ref{lemma:M(f_t)_CM}, we would have that $\dim(\text{Supp}(M(\mathfrak{a})))=1$, and thus the intersection $\text{Supp}(M(\mathfrak{a})) \cap V(\langle t \rangle)$ would contain the maximal ideal $\mathfrak{m}$, as $R$ is a local ring.  
    \item Assume that $j_0(f_t)=0$ for some relative morsification $f_t$ of $f$. Then we get that $M(\mathfrak{a})=0$ and thus $Jac(f_t)=\mathfrak{a}$. Yet, since $\mathfrak{a}$ is generification of $Jac(f_t)$ over $\mathfrak{p}$, we have that $\sfrac{R}{\mathfrak{a}}$ is Cohen-Macaulay of dimension $2$, and so is $\sfrac{R}{Jac(f_t)}$. Thus, since $t$ is a non zero-disivor of $\sfrac{R}{\mathfrak{a}}$ we have that $\sfrac{R}{Jac(f_t)} \otimes \sfrac{R}{\langle t \rangle} \cong \sfrac{\mathbb{C}\{\underline{x}\}}{Jac(f)}$ is a Cohen-Macaulay ring of dimension $1$. Hence the result follows from Proposition~\ref{prop:trans_disc_LR}. 
\end{enumerate}

\end{proof}

\begin{remark}
\textup{From Lemma~\ref{lemma:j=0_and_disc} we can conclude that there exists  a generification $\mathfrak{a}$ of $Jac(f_t)$ over $\mathfrak{p}$ such that $j_0(\mathfrak{a})=0$ if and only if $\sfrac{R}{Jac(f_t)}$ is a Cohen-Macaulay ring. }
\end{remark}

\begin{lemma}\label{lemma:j_0_finite}
Let  $f_t$ be a relative morsification of $f$ and let $\mathfrak{a}$ be a generification of $Jac(f_t)$ over $\mathfrak{p}$ such that $j_0(\mathfrak{a}) \neq 0$. Then $j_0(\mathfrak{a})$ is finite if and only if for every $m$ we have that $t^m \cdot \mathfrak{a} \not\subset Jac(f_t)$. 
\end{lemma}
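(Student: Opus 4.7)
The plan is to analyze the module $M := M(\mathfrak{a}) = \mathfrak{a}/Jac(f_t)$. By Remark~\ref{remark:jac_gen} we have $j_0(\mathfrak{a}) = \dim_\mathbb{C}(M/tM)$, and the hypothesis $j_0(\mathfrak{a}) \neq 0$ together with Nakayama (using $t \in \mathfrak{m}_R$) forces $M \neq 0$; Lemma~\ref{lemma:M(f_t)_CM} then makes $M$ Cohen--Macaulay of dimension $1$ over $R$. Note that the condition ``$t^m\mathfrak{a} \not\subset Jac(f_t)$ for every $m$'' is equivalent to $t^m M \neq 0$ for every $m$, i.e., to $t \notin \sqrt{\mathrm{ann}_R(M)}$.

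For the forward direction I would argue by contrapositive. If $t^m\mathfrak{a} \subset Jac(f_t)$ for some $m$, then $t^m$ annihilates $M$, so $\mathrm{Supp}(M) \subseteq V(t^m) = V(t)$. As $M$ is a nonzero Cohen--Macaulay module of dimension $1$, its support has dimension exactly $1$, so $\mathrm{Supp}(M/tM) = \mathrm{Supp}(M) \cap V(t) = \mathrm{Supp}(M)$ is also $1$-dimensional, making $M/tM$ of infinite $\mathbb{C}$-length and $j_0(\mathfrak{a}) = \infty$.

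For the backward direction, every associated prime of $M$ has height $n$ (CM of dimension $1$ in the regular local ring $R$ of dimension $n+1$). Any $\mathfrak{q} \in \mathrm{Ass}(M) \subseteq \mathrm{Ass}(R/Jac(f_t))$ containing $t$ must contain $Jac(f_t) + \langle t\rangle = Jac(f) + \langle t\rangle$ (using $\partial_i f_t \equiv \partial_i f \pmod{t}$), and taking radicals yields $\mathfrak{q} \supseteq \mathfrak{p} + \langle t\rangle$; since both have height $n$, $\mathfrak{q} = \mathfrak{p} + \langle t\rangle$. Thus $\mathfrak{p} + \langle t\rangle$ is the only ``horizontal'' prime that could appear in $\mathrm{Ass}(M)$. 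I would then combine the generification identity $\mathfrak{a}_\mathfrak{p} = Jac(f_t)_\mathfrak{p}$ with the CM structure of $M$ to argue that $\mathfrak{p} + \langle t\rangle$ cannot coexist in $\mathrm{Ass}(M)$ with any other associated prime; together with the hypothesis (which rules out $\mathrm{Ass}(M) = \{\mathfrak{p} + \langle t\rangle\}$, since then $t$ would be nilpotent on $M$), this forces no associated prime of $M$ to contain $t$. Hence $t$ is a non-zero-divisor on $M$, and CM of dimension $1$ makes $M/tM$ of finite $\mathbb{C}$-length, so $j_0(\mathfrak{a}) < \infty$. The main obstacle I expect is this final step ruling out the mixed case --- that $\mathfrak{p} + \langle t\rangle$ appears in $\mathrm{Ass}(M)$ together with a prime omitting $t$; this is the genuinely arithmetic step that uses input from the relative morsification beyond the formal Cohen--Macaulay property of $M$.
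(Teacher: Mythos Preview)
Your overall strategy --- reduce to the Cohen--Macaulay module $M=M(\mathfrak{a})$ of dimension $1$ and study $\mathrm{Supp}(M)\cap V(t)$ --- is exactly the paper's, and your forward direction (by contrapositive) agrees with the paper's treatment of the case $\mathrm{Supp}(M)\subset V(t)$.

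The backward direction, however, has a genuine gap. You correctly pin down $\mathfrak{p}+\langle t\rangle$ as the only associated prime of $M$ that could contain $t$, but then you only \emph{promise} to ``combine the generification identity $\mathfrak{a}_\mathfrak{p}=Jac(f_t)_\mathfrak{p}$ with the CM structure'' to exclude the mixed case, and you yourself flag this as the main obstacle. Neither ingredient you name suffices: the generification identity gives only $M_\mathfrak{p}=0$, which is already forced by the height count once $M$ is Cohen--Macaulay of dimension $1$ (every associated prime of $M$ has height $n>\mathrm{ht}(\mathfrak{p})=n-1$), and it says nothing about $M_{\mathfrak{p}+\langle t\rangle}$; and the Cohen--Macaulay property alone does not prevent a one-dimensional module from having several support components, some inside $V(t)$ and some not. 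So you have located the delicate point but not closed it. The paper, by contrast, does not pass through associated primes at all: from $\dim\mathrm{Supp}(M)=1$ it asserts directly the dichotomy ``either $\mathrm{Supp}(M)\cap V(t)=\{\mathfrak{m}\}$ or $\mathrm{Supp}(M)\subset V(t)$'' and then finishes with Nakayama in the first case; your mixed case is precisely what this dichotomy is meant to rule out.
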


\begin{proof}
    Since $j_0(\mathfrak{a}) \neq 0$, then  by Lemma~\ref{lemma:j=0_and_disc}, we have that $M(\mathfrak{a}) \neq 0$, which tells us that $\text{Supp}_R(M(\mathfrak{a})) \neq \emptyset$. Thus by Lemma~\ref{lemma:M(f_t)_CM} we get that $M(\mathfrak{a})$ is a module of dimension $1$ over $R$. Therefore, since $\text{Supp}_{R} (M(\mathfrak{a}) \otimes_R \sfrac{R}{\langle t \rangle }) = \text{Supp}_R(M(\mathfrak{a})) \cap V(\langle t \rangle)$, we have that either the intersection is exactly the unique maximal ideal or $\text{Supp}(M(\mathfrak{a})) \subset V(\langle t \rangle)$. \\
    
    If their intersection is the maximal ideal, that is, $\text{Supp}_{R} (M(\mathfrak{a}) \otimes_R \sfrac{R}{\langle t \rangle }) = \{\mathfrak{m}\}$, then since $M(\mathfrak{a}) \otimes_R \sfrac{R}{\langle t \rangle }$ is a finitely generated module over $\mathbb{C}\{\underline{x}\}$, from Nakayama's Lemma (see Corollary 4.8 in~\cite{eisenbud2013commutative}) we can conclude that $j_0(\mathfrak{a})$ is finite. \\
    
    Otherwise,  $\text{Supp}(M(\mathfrak{a})) \subset V(\langle t \rangle)$ is equivalent to the fact $\sqrt{Ann(M(\mathfrak{a}))} \supset \langle t \rangle$, which itself is equivalent to the fact that there exists some $m$ such that  $t^m \cdot \mathfrak{a} \subset Jac(f_t)$. In this case we have that $\text{Supp}_R (M(\mathfrak{a})) = \text{Supp}_R (M(\mathfrak{a}) \otimes \sfrac{R}{\langle t \rangle})$, which is of dimension $1$, and $j_0(\mathfrak{a})$ can not be finite. 
\end{proof}

\begin{remark}
\textup{The assumption in Lemma~\ref{lemma:j_0_finite} that $j_0(\mathfrak{a}) \neq 0$ is essential. Take for example $f(x,y,z)=x^p+y^p$ and consider the trivial deformation $f_t=f$. Since $f$ is Morse outside $V(\langle x,y\rangle)$ and since $\Delta^\perp(f) =\emptyset$ we have that $f_t$ is a relative morsification of $f$ and that $\mathfrak{a}=Jac(f_t)$ is a generification of itself over $\mathfrak{p}$. Yet we have that $j_0(f_t) =0$, which is finite, and that $t^m \cdot \mathfrak{a} \subset Jac(f_t)$ for every $m$. }
\end{remark}

\begin{lemma}\label{lemma:j_additive}
Let $f \in I^p \setminus I^{p+1}$ with $\Sing(V(f))=V(I)$, let $f_t$ be a relative morsification of $f$, and let $ \mathfrak{a}$ be a generification of $Jac(f_t)$ over $\mathfrak{p}$ such that $j_0(\mathfrak{a})$ is finite and non-zero. Then, following the notations of Notation~\ref{def:all_hail_his_satanic_majesty},  for every $t_0$ small enough we have that $j(\mathfrak{a},p,t_0)$ is finite for every $p \in \pi^{-1}(t_0)$ and  
\begin{equation*}
    j_0(\mathfrak{a}) = \sum_{p \in \pi^{-1}(t_0)} j(\mathfrak{a},p, t_0). 
\end{equation*}
\end{lemma}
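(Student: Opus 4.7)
The plan is to invoke Proposition~\ref{prop:bigbigbig} with $\mathcal{N}=\mathcal{M}$, where $\mathcal{M}=\mathcal{I}_F/\mathcal{J}_F$ is the coherent sheaf from Notation~\ref{def:all_hail_his_satanic_majesty}. Once the hypotheses of that proposition are verified for $\mathcal{M}_0 = M(\mathfrak{a})$, the additivity formula and the finiteness of each $j(\mathfrak{a},p,t_0)$ are immediate from unwinding definitions.

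I would first verify the three standing hypotheses on $\mathcal{M}_0$. Coherence of $\mathcal{M}$ is immediate since it is a quotient of coherent sheaves. Since $j_0(\mathfrak{a})\neq 0$, Lemma~\ref{lemma:j=0_and_disc} gives $M(\mathfrak{a})\neq 0$, so Lemma~\ref{lemma:M(f_t)_CM} yields that $M(\mathfrak{a})$ is Cohen-Macaulay of dimension exactly $1$ over $R$. What remains is the finite-rank condition over $\mathbb{C}\{t\}$. For this, I would argue first that $t$ is a non-zerodivisor on $M(\mathfrak{a})$: as in the proof of Lemma~\ref{lemma:j_0_finite}, the finiteness and non-vanishing of $j_0(\mathfrak{a})$ force $\text{Supp}_R(M(\mathfrak{a}))\cap V(\langle t\rangle)=\{\mathfrak{m}\}$, so no minimal prime of the support contains $t$. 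Since $M(\mathfrak{a})$ is Cohen-Macaulay, its associated primes coincide with the minimal primes of its support, hence $t$ is a non-zerodivisor on $M(\mathfrak{a})$. Combining this with $\dim_\mathbb{C}M(\mathfrak{a})/tM(\mathfrak{a})=j_0(\mathfrak{a})<\infty$, a Weierstrass preparation / Nakayama argument (applied to a filtration by powers of $t$) produces finite generation over the DVR $\mathbb{C}\{t\}$; torsion-freeness then upgrades this to freeness of rank $j_0(\mathfrak{a})$.

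With the hypotheses in hand, Proposition~\ref{prop:bigbigbig} gives for all small $t_0$
\begin{equation*}
    \dim_\mathbb{C}\bigl((\mathcal{M}|_{\pi^{-1}(0)})_0\bigr) \;=\; \sum_{p\in \pi^{-1}(t_0)\cap \text{Supp}(\mathcal{M})} \dim_\mathbb{C}\bigl((\mathcal{M}|_{\pi^{-1}(t_0)})_{(p,t_0)}\bigr).
\end{equation*}
The left-hand side is $\dim_\mathbb{C}(M(\mathfrak{a})\otimes_R R/\langle t\rangle)=j_0(\mathfrak{a})$, and each summand on the right is exactly $j(\mathfrak{a},p,t_0)$; since $\pi$ restricted to $\text{Supp}(\mathcal{M})$ is finite (the support is one-dimensional and not contained in $V(\langle t\rangle)$), the sum is over finitely many points, and the finiteness of each $j(\mathfrak{a},p,t_0)$ follows from the finiteness of the total $j_0(\mathfrak{a})$.

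I expect the main obstacle to lie in the finite-rank step: one must cleanly combine the Cohen-Macaulay structure of Lemma~\ref{lemma:M(f_t)_CM}, the support/non-zerodivisor analysis embedded in Lemma~\ref{lemma:j_0_finite}, and the structure theory for modules over $\mathbb{C}\{t\}$ to conclude that $M(\mathfrak{a})$ is a free $\mathbb{C}\{t\}$-module of finite rank. Everything else is a direct application of Proposition~\ref{prop:bigbigbig} and the definitions of $j_0(\mathfrak{a})$ and $j(\mathfrak{a},p,t_0)$.
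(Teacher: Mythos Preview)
Your proposal is correct and follows essentially the same route as the paper: use Lemma~\ref{lemma:j=0_and_disc} and Lemma~\ref{lemma:M(f_t)_CM} to show $M(\mathfrak{a})$ is Cohen--Macaulay of dimension $1$, then apply Proposition~\ref{prop:bigbigbig}. You are in fact more careful than the paper in explicitly verifying the finite-rank hypothesis over $\mathbb{C}\{t\}$, which the paper's one-line proof leaves implicit.
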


\begin{proof} 
Since $j_0(\mathfrak{a})$ is non-zero and finite then from  Lemma~\ref{lemma:j=0_and_disc} and Lemma~\ref{lemma:M(f_t)_CM}, $M(\mathfrak{a})$ is a Cohen-Macaulay module of dimension $1$ over $R$, and the result follows from Proposition~\ref{prop:bigbigbig}. 
\end{proof}

We are now ready to prove Theorem~\ref{thm:main}. 

\begin{proof}[Proof of Theorem~\ref{thm:main}]
First, if $j_0(\mathfrak{a})$ is infinite then the statement is true vacuously, and if $j_0(\mathfrak{a})=0$ then from Lemma~\ref{lemma:j=0_and_disc} we have that $\deg(\Delta^\perp(f)) = 0$, and from Remark~\ref{rem:empty_discr} we can conclude that $\# A_1(f_{t_0})=0$ for every small $t_0$. Hence we can assume that $j_0(\mathfrak{a})$ is finite and non zero. \\

Following Notation~\ref{def:all_hail_his_satanic_majesty}, let $t_0 \in U_1$ and let $ p \in \pi^{-1}(t_0)$. If $p \notin \Crit(f_{t_0})$ then $j(\mathfrak{a},p, t_0)=0$ since $\sfrac{\mathcal{O}_{\mathbb{C}^{n+1}}}{\mathcal{I}_F}$ and $\sfrac{\mathcal{O}_{\mathbb{C}^{n+1}}}{\mathcal{J}}$ are supported inside the critical locus of $F$ with respect to $x_1, \dots, x_n$. If $p$ is a non singular critical point of $f_{t_0}$ then $p$ is a Morse point, since $f_{t_0}$ is Morse outside $V(I)$. As $p \notin V(\mathfrak{p}) \cap U$ and $\sqrt{\mathfrak{a}}=\mathfrak{p}$, then we have that $(\mathcal{I}_F)_{(p,t)} \cong \mathbb{C}\{\underline{x} - \underline{p}\}$.  By applying Morse's Lemma (Theorem 2.46 in \cite{greuel2007introduction}) we have that $j(\mathfrak{a},p, t_0)=1$. So, combined with Lemma~\ref{lemma:j_additive}, we can conclude that  
\begin{equation*}
    j_0(\mathfrak{a}) = \sum_{p \in \Crit(f_{t_0})} j(\mathfrak{a},p, t_0)=\#A_1(f_t) + \sum_{p \in \Sing(f_{t_0})} j(\mathfrak{a},p, t_0). 
\end{equation*}

From Lemma~\ref{lemma:j=0_and_disc}, we get that for every $p \in \Delta^\perp(f_{t_0})$ we have  $j(\mathfrak{a}, p, t_0) \geq 1$. Since $f_t$ is a relative morsification of $f$, then $\Delta^\perp(f_{t_0})$ is reduced, and from Remark~\ref{prop:disc_flat_defor} we can conclude that the size of $\Delta^\perp(f_{t_0})$, as a set, is exactly $\deg(\Delta^\perp(f))$. Thus, we get that 

\begin{equation*}
    j_0(\mathfrak{a}) = \#A_1(f_t) + \sum_{p \in \Sing(f_{t_0})} j(\mathfrak{a},p, t_0) \geq \#A_1(f_t) + \sum_{p \in \Delta^\perp(f_{t_0})} j(\mathfrak{a},p, t_0), 
\end{equation*}

 and since $\sum_{p \in \Delta^\perp(f_{t_0})} j(\mathfrak{a},p, t_0) \geq \deg(\Delta^\perp(f))$, the result follows.\\ 
\end{proof}

\begin{remark}\label{rem:34}
    \textup{From the proof of Theorem~\ref{thm:main} and from part 1 of Remark~\ref{rem:24}, we can conclude that for every relative morsification $f_t$ of $f$, the number of $\# A_1(f_{t_0})$ does not depend on $t_0$ and is finite (as long as $t_0$ is small enough). }
\end{remark}

We now discussion finding an explicit generifications of $Jac(f_t)$ over $\mathfrak{p}$. Since we are interested in $M(\mathfrak{a}) = \sfrac{\mathfrak{a}}{Jac(f_t)}$, one would like to find a "nice" ideal $Jac(f_t) \subset \mathfrak{a}$ which is a generification of $Jac(f_t)$ over $\mathfrak{p}$. The following definitions and  two propositions would give us some motivation for such an ideal. 

\begin{definition}\label{def:for_a _moment}
   The \textbf{generic Jacobian ideal of $f$ (over $I$)} is defined as $Jac_{gen}(f) = \mathbb{C}\{\underline{x}\} \cap Jac(f)_{\langle x_1, \dots, x_{n-1} \rangle}$ and \textbf{the Jacobi number of $f$ (over $I$)} is defined to be $j(f) = \dim_\mathbb{C} (\sfrac{Jac_{gen}(f)}{Jac(f)})$. 
\end{definition}

\begin{remark}
    \textup{Note that }
    \begin{equation*}
        Jac_{gen}(f)=\{ g \in \mathbb{C}\{\underline{x}\} \colon \exists s \notin \langle x_1, \dots, x_{n-1} \rangle\text{ such that } sg \in Jac(f)\}. 
    \end{equation*}
    \textup{In addition, we can compare the definition of $Jac_{gen}(f)$ with the definition of $I_f$, as presented in Section 5 of~\cite{pellikaan1988finite}. }
\end{remark}

\begin{proposition}\label{prop:Jac_gen_is_generification}
 $\sfrac{\mathbb{C}\{\underline{x}\}}{ Jac_{gen}(f)}$ is a Cohen-Macaulay ring of dimension $1$. In addition, $Jac_{gen}(f)$ is the smallest such ideal which contains $Jac(f)$. 
\end{proposition}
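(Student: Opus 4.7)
The key observation I would start from is that $Jac_{gen}(f)$ is precisely the $\mathfrak{p}$-primary component of $Jac(f)$, where $\mathfrak{p} = \langle x_1, \dots, x_{n-1}\rangle$. Indeed, since $V(Jac(f)) = \Sing(V(f)) = V(\mathfrak{p})$ is irreducible, $\mathfrak{p}$ is the unique minimal prime over $Jac(f)$; moreover, in the regular local ring $\mathbb{C}\{\underline{x}\}$ the only prime strictly containing $\mathfrak{p}$ is the maximal ideal $\mathfrak{m}$, because $\text{ht}(\mathfrak{p}) = n-1$ and $\text{ht}(\mathfrak{m}) = n$. Consequently any primary decomposition of $Jac(f)$ has the form $Q \cap Q'$ with $Q$ being $\mathfrak{p}$-primary and $Q'$ either $\mathfrak{m}$-primary or the whole ring, and the standard commutative-algebra identity $\mathbb{C}\{\underline{x}\} \cap Jac(f)_\mathfrak{p} = Q$ then yields $Jac_{gen}(f) = Q$.

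From this, the first claim will fall out quickly. Since $Jac_{gen}(f)$ is $\mathfrak{p}$-primary, $\text{Ass}\bigl(\mathbb{C}\{\underline{x}\}/Jac_{gen}(f)\bigr) = \{\mathfrak{p}\}$, so the quotient has dimension $n - \text{ht}(\mathfrak{p}) = 1$ and no embedded components. The zero-divisors of the quotient coincide with $\mathfrak{p}$, hence any element of $\mathfrak{m} \setminus \mathfrak{p}$, for instance $x_n$, is a non-zero-divisor; this gives depth at least $1$, which forces depth equal to dimension, and thus Cohen-Macaulayness.

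For the minimality statement, let $\mathfrak{a} \supset Jac(f)$ be any ideal whose quotient is Cohen-Macaulay of dimension $1$. Since $V(\mathfrak{a})$ is a $1$-dimensional subvariety of the $1$-dimensional $V(\mathfrak{p})$, we have $\sqrt{\mathfrak{a}} = \mathfrak{p}$; the Cohen-Macaulay hypothesis rules out $\mathfrak{m}$ as an associated prime, so $\mathfrak{a}$ is itself $\mathfrak{p}$-primary. Given any $g \in Jac_{gen}(f)$, one has $sg \in Jac(f) \subset \mathfrak{a}$ for some $s \notin \mathfrak{p}$, and the primary property then forces $g \in \mathfrak{a}$, giving $Jac_{gen}(f) \subset \mathfrak{a}$. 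I do not anticipate a serious obstacle here: the whole argument is a clean application of primary decomposition in a regular local ring, and the only mild subtlety is the height calculation guaranteeing that no prime sits strictly between $\mathfrak{p}$ and $\mathfrak{m}$, which is what restricts the possible associated primes on both sides.
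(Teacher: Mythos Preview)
Your proof is correct and follows essentially the same approach as the paper: both show that $x_n$ (or any element outside $\mathfrak{p}$) is a non-zero-divisor modulo $Jac_{gen}(f)$ to obtain depth $\geq 1$, bound the dimension by $1$ via $Jac(f)\subset Jac_{gen}(f)$, and prove minimality by arguing that in a Cohen--Macaulay quotient of dimension $1$ any $s\notin\mathfrak{p}$ is a non-zero-divisor, so $sg\in\mathfrak{a}$ forces $g\in\mathfrak{a}$. Your explicit identification of $Jac_{gen}(f)$ as the $\mathfrak{p}$-primary component of $Jac(f)$ is a clean organizing observation that the paper leaves implicit (the paper works directly at the element level), but the underlying logic is identical.
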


\begin{proof}
First, $x_n$ is a non zero-divisor of $\sfrac{\mathbb{C}\{\underline{x}\}}{ Jac_{gen}(f)}$ since if $x_nh \in  Jac_{gen}(f)$, then there exists some $s \notin \langle x_1, \dots, x_{n-1} \rangle$ such that $sx_n h \in Jac(f)$, but since $s x_n \notin \langle x_1, \dots, x_{n-1} \rangle$, we get that $h \in Jac_{gen}(f)$. Thus the depth of $\sfrac{\mathbb{C}\{\underline{x}\}}{ Jac_{gen}(f)}$ is at least $1$. Yet we have that $Jac(f) \subset Jac_{gen}(f)$, and thus $V(Jac_{gen}(f)) \subset V(Jac(f))$. So we have that $\dim(\sfrac{\mathbb{C}\{\underline{x}\}}{Jac_{gen}(f)}) \leq \dim(\sfrac{\mathbb{C}\{\underline{x}\}}{Jac(f)})=1$, and 
 we can conclude that $\sfrac{\mathbb{C}\{\underline{x}\}}{ Jac_{gen}(f)}$ is a Cohen-Macaulay ring of dimension $1$.\\

Now, let $Jac(f) \subset \mathfrak{a}$ be an ideal such that $\sfrac{\mathbb{C}\{\underline{x}\}}{\mathfrak{a}}$ is a Cohen Macaulay ring of dimension $1$, and let $h \in Jac_{gen}(f)$. Then there exists some $s \notin \langle x_1, \dots, x_{n-1} \rangle$ such that $sh \in Jac(f)$. But since $Jac(f) \subset \mathfrak{a}$ we get that $sh \in \mathfrak{a}$. Yet, since $\sfrac{\mathbb{C}\{\underline{x}\}}{\mathfrak{a}}$ is a Cohen Macaulay ring of dimension $1$ we have that $s$ must be a non zero-divisor of $\sfrac{\mathbb{C}\{\underline{x}\}}{\mathfrak{a}}$, and thus  $h \in \mathfrak{a}$.  
\end{proof}

\begin{proposition}
   $j(f)$ is finite, and equals to zero if and only if $\sfrac{\mathbb{C}\{\underline{x}\}}{Jac(f)}$ is Cohen-Macaulay over $\mathbb{C}\{\underline{x}\}$. 
\end{proposition}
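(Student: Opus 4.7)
The plan is to handle the two assertions separately, using Proposition~\ref{prop:Jac_gen_is_generification} as the main input, and the definition of $Jac_{gen}(f)$ as a localization-intersection as the main technical tool.

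For finiteness of $j(f)$, I would first note that $Jac_{gen}(f)/Jac(f)$ is a finitely generated $\mathbb{C}\{\underline{x}\}$-module, since $\mathbb{C}\{\underline{x}\}$ is Noetherian and the module is a submodule of the quotient $\mathbb{C}\{\underline{x}\}/Jac(f)$. The strategy is then to show that this module is in fact supported only at the maximal ideal $\mathfrak{m}$, so that it is of finite length over $\mathbb{C}$. On the one hand, the support of $Jac_{gen}(f)/Jac(f)$ is contained in $V(Jac(f)) = V(\mathfrak{p})$, where $\mathfrak{p} = \langle x_1, \dots, x_{n-1}\rangle$, since $\operatorname{Sing}(V(f)) = V(\mathfrak{p})$. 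On the other hand, the very definition $Jac_{gen}(f) = \mathbb{C}\{\underline{x}\} \cap Jac(f)_\mathfrak{p}$ forces $Jac_{gen}(f)_\mathfrak{p} = Jac(f)_\mathfrak{p}$, so localizing the short exact sequence
\begin{equation*}
0 \longrightarrow Jac_{gen}(f)/Jac(f) \longrightarrow \mathbb{C}\{\underline{x}\}/Jac(f) \longrightarrow \mathbb{C}\{\underline{x}\}/Jac_{gen}(f) \longrightarrow 0
\end{equation*}
at $\mathfrak{p}$ shows that $\mathfrak{p}$ is not in the support of $Jac_{gen}(f)/Jac(f)$.

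Now I would use the fact that $\mathfrak{p}$ has height $n-1$ and $\mathfrak{m}$ has height $n$, so the only prime of $\mathbb{C}\{\underline{x}\}$ strictly containing $\mathfrak{p}$ is the maximal ideal $\mathfrak{m}$. Hence the support of $Jac_{gen}(f)/Jac(f)$ is either empty or equals $\{\mathfrak{m}\}$. A finitely generated module over a local Noetherian ring whose support consists of the maximal ideal alone has annihilator containing a power of $\mathfrak{m}$, and is therefore a finite-dimensional $\mathbb{C}$-vector space. This gives $j(f) < \infty$.

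For the equivalence, I would simply read off both directions from Proposition~\ref{prop:Jac_gen_is_generification}. If $j(f) = 0$, then $Jac_{gen}(f) = Jac(f)$, and by that proposition $\mathbb{C}\{\underline{x}\}/Jac_{gen}(f) = \mathbb{C}\{\underline{x}\}/Jac(f)$ is Cohen-Macaulay of dimension $1$. Conversely, if $\mathbb{C}\{\underline{x}\}/Jac(f)$ is Cohen-Macaulay (necessarily of dimension $1$, since $V(Jac(f)) = V(\mathfrak{p})$ is a smooth curve germ), then the minimality clause of Proposition~\ref{prop:Jac_gen_is_generification} gives $Jac_{gen}(f) \subset Jac(f)$; together with the obvious reverse inclusion this yields $Jac(f) = Jac_{gen}(f)$ and hence $j(f) = 0$.

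There is no real obstacle: once Proposition~\ref{prop:Jac_gen_is_generification} is available, the equivalence is essentially formal. The only mildly delicate point is the finiteness argument, and even there the heart of the matter is the purely algebraic observation that $Jac(f)$ and $Jac_{gen}(f)$ agree after localization at $\mathfrak{p}$, which lets the dimension count on heights of primes do the rest.
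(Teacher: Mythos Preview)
Your proposal is correct and follows essentially the same approach as the paper: both argue finiteness by showing that the support of $Jac_{gen}(f)/Jac(f)$ is contained in $\{\mathfrak{m}\}$ (using that $V(Jac(f))=V(\mathfrak{p})=\{\mathfrak{p},\mathfrak{m}\}$ and that the localizations at $\mathfrak{p}$ agree), and both deduce the equivalence directly from Proposition~\ref{prop:Jac_gen_is_generification}. Your write-up is simply more explicit about the routine steps.
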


\begin{proof}
Since $\sfrac{Jac_{gen}(f)}{Jac(f)}$ is finitely generated over 
$\mathbb{C}\{\underline{x}\}$, it is enough to prove that its support is only $\mathfrak{m}$. First, since $V(Jac(f))=V(Jac_{gen}(f))=V(I)$ then for every $\mathfrak{q} \notin V(I)$ we have that $(Jac_{gen}(f))_\mathfrak{q} = (Jac(f))_\mathfrak{q}=0$. Now, since as a subset of $\text{Spec}(\mathbb
{C}\{\underline{x}\})$ we have that $V(I) =\{I, \mathfrak{m}\}$, and since $(Jac(f))_I = (Jac_{gen}(f))_I$, we indeed get that the support is exactly the maximal ideal. The second part follows from Proposition~\ref{prop:Jac_gen_is_generification}. 
\end{proof}

Therefore, a natural idea would be to look at $j_0(Jac_{gen}(f_t))$ where we set $Jac_{gen}(f_t)=R \cap Jac(f_t)_\mathfrak{p}$ since (as with $Jac_{gen}(f)$) we have that  $(Jac_{gen}(f_t))_\mathfrak{p}=Jac(f_t)_\mathfrak{p}$, $\dim(\sfrac{R}{Jac_{gen}(f_t)}) \leq 2$, and $t$ is a non zero-divisor of $\sfrac{R}{Jac_{gen}(f_t)}$. But, as we see in the next example, the quotient ring $\sfrac{R}{Jac_{gen}(f_t)}$ need not be a Cohen-Macaulay ring for any $f_t$. 

\begin{example}
\textup{Let $f_t(x,y,z)=tx^p +x^{p+1}y+zy^{p-2}x^3 +y^p$. We show that the depth of $\sfrac{R}{Jac_{gen}(f_t)}$ is $1$ by showing that $z$ is a zero divisor of $\sfrac{R}{Jac_{gen}(f_t)+\langle t \rangle}$, since $t$ is a non zero-divisor of $\sfrac{R}{Jac_{gen}(f_t)}$. First, we have that}
\begin{equation*}
    Jac(f_t)= \langle ptx^{p-1}+(p+1)x^py +3zx^2y^{p-2}, x^{p+1} + (p-2)zy^{p-2}x^3 +py^{p-2}, y^{p-2}x^3\rangle. 
\end{equation*}

\textup{Thus we get that $ptx^p+(p+1)x^{p+1} y \in Jac(f_t)$. But since $ptx^p+(p+1)x^{p+1}y=x^{p}(pt + (p+1)xy)$ and $(pt + (p+1)xy) \notin Jac_{gen}(f_t)$, we get that $x^p \in Jac_{gen}(f_t)$. Thus we can conclude that $ptx^{p-1} + zx^2 y^{p-2} \in Jac_{gen}(f_t)$ and thus $zx^2 y^{p-2} \in Jac_{gen}(f_t)+\langle t \rangle$. Yet we have that $x^2 y^{p-2} \notin Jac_{gen}(f_t)+\langle t \rangle$.}
\end{example}

Yet, if we assume that $j(f) \neq 0$ then this is true, as we see in the following lemmata. 

\begin{lemma} \label{lemma:crazy_man_prime_avoidance}
    Let $\mathfrak{a} \subset R$ be any ideal and let $\mathfrak{a} = \cap_{i=1}^k \mathfrak{a}_i$ be the prime decomposition of $\mathfrak{a}$, where $\mathfrak{a}_1, \dots, \mathfrak{a}_l \subset \mathfrak{p}$ and $\mathfrak{a}_{l+1}, \dots, \mathfrak{a}_k \not\subset \mathfrak{p}$. Denote $\sqrt{\mathfrak{a}_i}=\mathfrak{q}_i$ for every $i$ and $\mathfrak{b}= \cap_{i=1}^l \mathfrak{a}_i$. Then $\cap_{i=l+1}^k \mathfrak{p}_i \not\subset \mathfrak{p} \cup \langle t \rangle$ if and only if for every $r \in \mathfrak{b}$ there exists some $s \notin \mathfrak{p} \cup\langle t \rangle$ such that $sr \in \mathfrak{a}$. 
\end{lemma}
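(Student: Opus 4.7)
The approach is to prove the two directions separately, using the structure of the primary decomposition (which I take to be minimal, as is implicit in ``the prime decomposition'') together with prime avoidance.

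For the forward direction, assume $\bigcap_{i=l+1}^{k} \mathfrak{q}_i \not\subset \mathfrak{p} \cup \langle t \rangle$ and pick $u$ in this intersection lying outside $\mathfrak{p} \cup \langle t \rangle$. Because $\mathfrak{q}_i = \sqrt{\mathfrak{a}_i}$, each $\mathfrak{a}_i$ with $i \geq l+1$ contains some power $u^{m_i}$; setting $s = u^{m}$ with $m = \max_{i} m_i$ gives an element of $\bigcap_{i \geq l+1} \mathfrak{a}_i$ which, by primality of $\mathfrak{p}$ and $\langle t \rangle$, still avoids $\mathfrak{p} \cup \langle t \rangle$. For any $r \in \mathfrak{b}$, the product $sr$ lies in both $\mathfrak{b}$ and $\bigcap_{i \geq l+1} \mathfrak{a}_i$, hence in their intersection $\mathfrak{a}$, as required.

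For the converse I argue by contradiction: assume the displayed property, yet $\bigcap_{i \geq l+1} \mathfrak{q}_i \subset \mathfrak{p} \cup \langle t \rangle$. The classical prime avoidance lemma (for two primes) places the intersection inside one of the two. The case $\bigcap \mathfrak{q}_i \subset \mathfrak{p}$ is excluded: since $\mathfrak{a}_i \subset \mathfrak{q}_i$, the assumption $\mathfrak{a}_i \not\subset \mathfrak{p}$ for $i \geq l+1$ forces $\mathfrak{q}_i \not\subset \mathfrak{p}$, and prime avoidance then rules out the intersection being contained in $\mathfrak{p}$. Hence $\bigcap_{i \geq l+1} \mathfrak{q}_i \subset \langle t \rangle$, so some $\mathfrak{q}_j \subset \langle t \rangle$ with $j \geq l+1$. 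Since $R$ is a regular local domain and $\langle t \rangle$ has height one, either $\mathfrak{q}_j = 0$ (forcing $\mathfrak{a} = 0$, a trivial case) or $\mathfrak{q}_j = \langle t \rangle$.

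In the remaining case, the hypothesis applied to $r \in \mathfrak{b}$ produces $s \notin \mathfrak{p} \cup \langle t \rangle$ with $sr \in \mathfrak{a} \subset \mathfrak{a}_j$; since $\mathfrak{a}_j$ is $\mathfrak{q}_j$-primary and $s \notin \mathfrak{q}_j$, the primary property gives $r \in \mathfrak{a}_j$, i.e.\ $\mathfrak{b} \subset \mathfrak{a}_j$. Then
\[
\bigcap_{i \neq j} \mathfrak{a}_i \;=\; \mathfrak{b} \cap \bigcap_{\substack{i \geq l+1 \\ i \neq j}} \mathfrak{a}_i \;\subset\; \mathfrak{b} \;\subset\; \mathfrak{a}_j,
\]
contradicting the minimality of the primary decomposition. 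The main difficulty is precisely this last step: one has to upgrade the radical-level containment $\bigcap \mathfrak{q}_i \subset \langle t \rangle$ to the ideal-theoretic containment $\mathfrak{b} \subset \mathfrak{a}_j$, and it is the combination of $R$ being a regular local domain (so that $\langle t \rangle$ is a height-one prime, forcing $\mathfrak{q}_j = \langle t \rangle$ on the nose) with the primary property of $\mathfrak{a}_j$ that makes this upgrade possible.
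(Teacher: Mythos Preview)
Your forward direction coincides with the paper's. For the converse your argument is correct but follows a genuinely different route: you argue by contradiction, use prime avoidance to reduce to $\bigcap_{i>l}\mathfrak q_i\subset\langle t\rangle$, then exploit that $\langle t\rangle$ is a height-one prime in the regular local ring $R$ to pin down some $\mathfrak q_j=\langle t\rangle$, and finally use the primary property and minimality to obtain the contradiction $\bigcap_{i\neq j}\mathfrak a_i\subset\mathfrak a_j$. The paper instead argues directly: it picks $c\in\mathfrak b\setminus\mathfrak a$, applies the hypothesis to get $s\notin\mathfrak p\cup\langle t\rangle$ with $sc\in\mathfrak a\subset\mathfrak a_i$, and uses the primary property to conclude $s\in\mathfrak q_i$ whenever $c\notin\mathfrak a_i$; the product of such witnesses (one for each $i>l$, using minimality to ensure such $c$'s exist) then lies in $\bigcap_{i>l}\mathfrak q_i$ but outside $\mathfrak p\cup\langle t\rangle$. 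The paper's approach is shorter and makes no use of the special features of $R$ or of $\langle t\rangle$---it would go through verbatim with any two primes in any Noetherian ring---whereas your argument leans on $R$ being a regular local domain and $\langle t\rangle$ having height one. On the other hand, your route makes the appeal to minimality of the primary decomposition completely explicit. Two minor remarks: what you invoke as ``prime avoidance'' to exclude $\bigcap\mathfrak q_i\subset\mathfrak p$ is really just the primality of $\mathfrak p$ (take the product of witnesses $y_i\in\mathfrak q_i\setminus\mathfrak p$); and the case $\mathfrak q_j=0$ cannot actually occur, since then $\mathfrak a_j=0\subset\mathfrak p$, contradicting $j>l$.
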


\begin{proof}
First, assume that  $\cap_{i=l+1}^k \mathfrak{p}_i \not\subset \mathfrak{p} \cup \langle t \rangle$. Then by the Prime Avoidance Lemma (see Section 3.2 in~\cite{eisenbud2013commutative}) there exists some $s \in (\cap_{i=l+1}^k \mathfrak{p}_i) \setminus (\mathfrak{p} \cup \langle t \rangle)$. Therefore there exists some $N$ such that $s^N \in \cap_{i=l+1}^k \mathfrak{a}_i$ and $s^N \notin \mathfrak{p} \cup \langle t \rangle$. So for every $r \in \mathfrak{b}$ we have that $r \in \cap_{i=1}^l \mathfrak{a}_i$ and so $s^N r \in \cap_{i=1}^k \mathfrak{a}_i =\mathfrak{a}$. \\

Second, assume that for every $r \in \mathfrak{b}$ there exists some $s \notin \mathfrak{p} \cup\langle t \rangle$ such that $sr \in \mathfrak{a}$. Let $c \in \mathfrak{b} \setminus \mathfrak{a}$, and let $s \notin \mathfrak{p} \cup \langle t \rangle$ such that $sc \in \mathfrak{a}$. Then for every $i$ we have that $sc \in \mathfrak{a}$. But since for every $i$, $\mathfrak{a}_i$ is primary with $r \notin \mathfrak{a} \subset \mathfrak{a}_i$, we have that $s \in \sqrt{\mathfrak{a}_i}=\mathfrak{p}_i$. Thus we can conclude that $s \in \cap_{i=l+1}^k \mathfrak{p}_i$ but $s \notin \mathfrak{p} \cup \langle t \rangle$. 

\end{proof}

\begin{lemma}\label{lemma:p_and_Jac(f_t)_and_t}
    Assume that $j(f) \neq 0$. Then for every $r \in Jac_{gen}(f_t)$ there exists some $s \notin \mathfrak{p} \cup \langle t \rangle$ such that $sr \in Jac(f_t)$. 
\end{lemma}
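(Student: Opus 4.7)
The plan is to apply Lemma~\ref{lemma:crazy_man_prime_avoidance} to the ideal $\mathfrak{a} = Jac(f_t)$. Fix a minimal primary decomposition $Jac(f_t) = \bigcap_{i=1}^k \mathfrak{a}_i$ with $\sqrt{\mathfrak{a}_i} = \mathfrak{q}_i$, ordered so that $\mathfrak{q}_1, \dots, \mathfrak{q}_l \subset \mathfrak{p}$ and $\mathfrak{q}_{l+1}, \dots, \mathfrak{q}_k \not\subset \mathfrak{p}$, and set $\mathfrak{b} = \bigcap_{i \le l} \mathfrak{a}_i$. The first step is to identify $\mathfrak{b}$ with $Jac_{gen}(f_t) = R \cap Jac(f_t)_\mathfrak{p}$ by a routine localization calculation: the components $\mathfrak{a}_i$ with $\mathfrak{q}_i \not\subset \mathfrak{p}$ become the unit ideal after inverting elements outside $\mathfrak{p}$, and contracting $\bigcap_{i \le l} (\mathfrak{a}_i)_\mathfrak{p}$ back to $R$ recovers $\bigcap_{i \le l} \mathfrak{a}_i = \mathfrak{b}$. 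With this identification in hand, Lemma~\ref{lemma:crazy_man_prime_avoidance} reformulates the desired statement as the ideal-theoretic assertion $\bigcap_{i > l} \mathfrak{q}_i \not\subset \mathfrak{p} \cup \langle t \rangle$.

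Next I would address this non-containment via the Prime Avoidance Lemma. If the index set $\{i > l\}$ is empty the intersection equals $R$ and the claim is trivial, so assume otherwise and, for contradiction, that $\bigcap_{i>l} \mathfrak{q}_i \subset \mathfrak{p} \cup \langle t \rangle$. Since $\mathfrak{p}$ and $\langle t \rangle$ are both prime, Prime Avoidance forces the intersection into one of them; applying the prime property once more (a prime containing an intersection of ideals contains one of them) produces an index $i_0 > l$ with $\mathfrak{q}_{i_0} \subset \mathfrak{p}$ or $\mathfrak{q}_{i_0} \subset \langle t \rangle$. The first alternative is excluded on the nose by the defining condition $\mathfrak{q}_{i_0} \not\subset \mathfrak{p}$.

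The main obstacle is therefore to rule out $\mathfrak{q}_{i_0} \subset \langle t \rangle$, and this is where the geometric description of $V(Jac(f_t))$ from the proof of Lemma~\ref{lemma:M(f_t)_CM} and part 1 of Remark~\ref{rem:24} does the work. Those results, which rely on $f_t$ being a relative morsification, show that the minimal primes of $Jac(f_t)$ in $R$ are exactly $\mathfrak{p}$, of height $n-1$, together with the Morse-curve primes $\mathfrak{q}_j$, each of height $n$. Since $\mathfrak{q}_{i_0}$ is an associated prime of $R/Jac(f_t)$, it must contain one of these minimal primes, and so $\langle t \rangle$ would have to contain that minimal prime too. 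But $\mathfrak{p}$ contains $x_1 \notin \langle t \rangle$, while each Morse-curve prime has height $n \ge 2$ and therefore cannot sit inside the height-one prime $\langle t \rangle$. This contradiction gives $\bigcap_{i > l} \mathfrak{q}_i \not\subset \mathfrak{p} \cup \langle t \rangle$ and hence the lemma. The hypothesis $j(f) \neq 0$ is used only in the background, through Remark~\ref{rem:empty_discr} and Lemma~\ref{lemma:j=0_and_disc}, to ensure that the Morse-curve components genuinely arise so that the set $\{i>l\}$ is non-vacuous in the interesting case; when $r$ already lies in $Jac(f_t)$ one may simply take $s = 1$.
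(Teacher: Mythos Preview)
Your argument is correct and follows essentially the same line as the paper's proof: take a primary decomposition of $Jac(f_t)$, identify the part supported in $\mathfrak{p}$ with $Jac_{gen}(f_t)$, and feed the geometric description of the remaining components (from the proof of Lemma~\ref{lemma:M(f_t)_CM} and Remark~\ref{rem:24}) into Lemma~\ref{lemma:crazy_man_prime_avoidance}. The only differences are cosmetic --- you rule out $\mathfrak{q}_{i_0}\subset\langle t\rangle$ via a height comparison where the paper argues directly that the Morse curves are not contained in $V(t)$, and your closing remark on the role of $j(f)\neq 0$ cites the wrong references (those concern $j_0(\mathfrak{a})$, not $j(f)$) --- but neither point affects correctness, since you already handle the case $\{i>l\}=\emptyset$ (equivalently $r\in Jac(f_t)$) by taking $s=1$.
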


\begin{proof}
    Let $Jac(f_t) = \cap_{i=1}^k \mathfrak{a}_i$ be the prime decomposition of $Jac(f_t)$. Now, since $Jac(f_t)_\mathfrak{p}= Jac_{gen}(f_t)_\mathfrak{p}$ we can assume that $\sqrt{\mathfrak{a}_1}=\mathfrak{p}$. Since $j(f) \neq 0$, then $Jac_{gen}(f_t) \neq Jac(f_t)$, and so from the proof of Lemma~\ref{lemma:M(f_t)_CM} the only $2-$dimensional irreducible component of $Jac(f_t)$ is $V(\mathfrak{p})$. Therefore we have that $\mathfrak{a}_i \not\subset \mathfrak{p}$ for every $i \neq 1$. Now, since $V(\mathfrak{a}_i)$ is a curve for every $i>1$ and $\cup_{i>1} V(\mathfrak{a}_i) = V(\cap_{i>1} \mathfrak{a}_i)$, we get that $\cap_{i>1} \mathfrak{a}_i \not\subset \langle t\rangle$ in addition to $\cap_{i>1} \mathfrak{a}_i \not\subset \mathfrak{p}$, and the result follows from Lemma~\ref{lemma:crazy_man_prime_avoidance}. 
\end{proof}

\begin{lemma}\label{lemma:s_q_and_stuff}
    Denote $\mathfrak{q} =\langle x_1, \dots, x_{n-1}, t \rangle$ and for every $\mathfrak{a} \subset R$ define $s_\mathfrak{q} (\mathfrak{a})=R \cap \mathfrak{a}_\mathfrak{q}$. Then $s_\mathfrak{q}(Jac(f_t) +\langle t\rangle) = s_\mathfrak{q}(Jac(f_t)) +\langle t\rangle$. 
\end{lemma}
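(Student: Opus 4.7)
The containment $\supseteq$ is immediate from monotonicity of $s_\mathfrak{q}$ together with $\langle t\rangle \subseteq Jac(f_t) + \langle t\rangle \subseteq s_\mathfrak{q}(Jac(f_t) + \langle t\rangle)$.

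For the reverse inclusion, the plan is to prove the colonwise identity $(Jac(f_t) + \langle t\rangle) : s = (Jac(f_t) : s) + \langle t\rangle$ for each $s \in R \setminus \mathfrak{q}$; unioning over such $s$ then yields the lemma, since $s_\mathfrak{q}(\mathfrak{I}) = \bigcup_{s \notin \mathfrak{q}} (\mathfrak{I} : s)$. Given $sr \in Jac(f_t) + \langle t\rangle$, I would write $sr = j + tu$ with $j \in Jac(f_t)$ and $u \in R$, and try $r' = r - tc$; then $sr' = j + t(u - sc)$, and since $t$ is a non-zero-divisor on $R/Jac(f_t)$ (no associated prime of $Jac(f_t)$ contains $t$), the condition $sr' \in Jac(f_t)$ is equivalent to $u \in \langle s\rangle + Jac(f_t)$. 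As $tu = sr - j \in \langle s\rangle + Jac(f_t)$ already holds, the task reduces to arranging that $t$ is a non-zero-divisor modulo $\langle s\rangle + Jac(f_t)$; equivalently, that $(t, s)$ forms a regular sequence on $R/Jac(f_t)$.

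To arrange this, I will refine the choice of $s$ by Prime Avoidance. The colon $(Jac(f_t) + \langle t\rangle) : r$ contains both the original $s \notin \mathfrak{q}$ and $t$ itself (since $tr \in \langle t\rangle$), while $t$ lies in none of the finitely many $1$-dimensional associated primes $\mathfrak{q}_i$ of $Jac(f_t)$ (those are primes of Morse curves, which are not contained in $V(t)$). Thus $(Jac(f_t) + \langle t\rangle) : r$ is not contained in $\mathfrak{q}$ nor in any $\mathfrak{q}_i$, and Prime Avoidance yields some $s' \in (Jac(f_t) + \langle t\rangle) : r$ outside $\mathfrak{q} \cup \bigcup_i \mathfrak{q}_i$. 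This $s'$ is a non-zero-divisor on $R/Jac(f_t)$. The main obstacle is then the remaining verification that $t$ is also a non-zero-divisor modulo $\langle s'\rangle + Jac(f_t)$: one must rule out new associated primes containing $t$ in the quotient $R/(\langle s'\rangle + Jac(f_t))$. This will rely on the structural description of associated primes of $Jac(f_t)$ coming from the proof of Lemma~\ref{lemma:M(f_t)_CM} (in particular, that the only $2$-dimensional component is $V(\mathfrak{p})$), by a Prime Avoidance argument in the spirit of Lemma~\ref{lemma:p_and_Jac(f_t)_and_t}.
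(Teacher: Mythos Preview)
Your approach is much more involved than the paper's and, as you yourself acknowledge, does not close: the final verification that $t$ remains a non-zero-divisor on $R/(\langle s'\rangle + Jac(f_t))$ is left open. In fact there is an earlier unjustified step as well: you assert that $t$ is a non-zero-divisor on $R/Jac(f_t)$ on the grounds that ``no associated prime of $Jac(f_t)$ contains $t$''. The structural description coming from Lemma~\ref{lemma:M(f_t)_CM} controls only the \emph{minimal} primes ($\mathfrak{p}$ and the Morse-curve primes $\mathfrak{q}_i$); nothing in the paper rules out embedded primes of $Jac(f_t)$, and $R/Jac(f_t)$ is not known to be Cohen--Macaulay (indeed, the whole apparatus of generifications exists precisely because it need not be). So both the intermediate and the final regularity claims are unsupported, and the Prime-Avoidance refinement to $s'$ does not address this.

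The paper's argument is of a completely different, elementary nature and never touches depth or regular sequences on $R/Jac(f_t)$. The decisive observation is that $\langle t\rangle$ is a \emph{prime} ideal contained in $\mathfrak{q}$: hence any $s\notin\mathfrak{q}$ already lies outside $\langle t\rangle$, and primality immediately gives $s_\mathfrak{q}(\langle t\rangle)=\langle t\rangle$. The paper combines this with the claim $s_\mathfrak{q}(Jac(f_t)+\langle t\rangle)=s_\mathfrak{q}(Jac(f_t))+s_\mathfrak{q}(\langle t\rangle)$ to conclude. You have missed this primality route entirely; it is the intended mechanism, not any structural control of $\mathrm{Ass}(R/Jac(f_t))$. (It is fair to note that the paper's additivity step is stated tersely and does not hold for arbitrary pairs of ideals in $\mathfrak{q}$, so it too leans on the particular situation---but that is a separate matter from the shape of the argument.)
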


\begin{proof}
    First, note that $s_\mathfrak{q}(Jac(f_t) +\langle t\rangle) = s_\mathfrak{q}(Jac(f_t)) +s_\mathfrak{q}(\langle t\rangle)$ (as $Jac(f_t) , \langle t \rangle \subset \mathfrak{q}$ but they are not contained in each other) and $\langle t \rangle \subset s_\mathfrak{q}(\langle t \rangle)$. Now, let $r \in s_\mathfrak{q}(\langle t \rangle)$. Then there exists some $s \notin \mathfrak{q}$ such that $rs \in \langle t \rangle$. But since $\langle t \rangle \subset \mathfrak{q}$ is a prime ideal, then $r \in \langle t \rangle$. 
\end{proof}

\begin{proposition}\label{prop:Jac_f_t_and_Jac_gen}
    Assume that $j(f) \neq 0$. Then $Jac_{gen}(f_t) +\langle t \rangle
    = Jac_{gen}(f) \cdot R +\langle t \rangle$. 
\end{proposition}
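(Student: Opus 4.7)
The plan is to establish the equality by showing that both ideals, modulo $\langle t\rangle$, have the same image in $\bar{R} := R/\langle t\rangle \cong \mathbb{C}\{\underline{x}\}$. The starting observation is that, since $f_t = f + tg$, one has $\partial_i f_t \equiv \partial_i f \pmod{t}$ for all $i$, which gives the identity $Jac(f_t) + \langle t\rangle = Jac(f) \cdot R + \langle t\rangle$ of ideals in $R$.

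The core step is to upgrade Lemma~\ref{lemma:p_and_Jac(f_t)_and_t} to the identification $Jac_{gen}(f_t) = s_\mathfrak{q}(Jac(f_t))$, where $\mathfrak{q} = \langle x_1, \dots, x_{n-1}, t\rangle = \mathfrak{p} + \langle t\rangle$. The inclusion $s_\mathfrak{q}(Jac(f_t)) \subseteq Jac_{gen}(f_t)$ is immediate because $\mathfrak{p} \subset \mathfrak{q}$. For the reverse inclusion I would reuse the primary decomposition $Jac(f_t) = \cap_{i=1}^k \mathfrak{a}_i$ from Lemma~\ref{lemma:p_and_Jac(f_t)_and_t}, with $\sqrt{\mathfrak{a}_1} = \mathfrak{p}$ and $\sqrt{\mathfrak{a}_i} = \mathfrak{p}_i$ a height-$n$ prime for $i > 1$, so that $Jac_{gen}(f_t) = \mathfrak{a}_1$. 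Since $V(\mathfrak{q}) \subsetneq V(\mathfrak{p})$, the prime $\mathfrak{q}$ is not a minimal associated prime of $Jac(f_t)$; together with the structural input of Lemma~\ref{lemma:M(f_t)_CM} provided by the relative morsification hypothesis, one can exclude $\mathfrak{q}$ from being an embedded associated prime as well, so that $\mathfrak{p}_i \neq \mathfrak{q}$ for all $i > 1$. Both being height-$n$ primes, this yields $\mathfrak{p}_i \not\subset \mathfrak{q}$, and since $\mathfrak{q}$ is prime, $\cap_{i>1} \mathfrak{p}_i \not\subset \mathfrak{q}$. Prime avoidance produces $s \in \cap_{i>1} \mathfrak{p}_i \setminus \mathfrak{q}$, and a sufficiently large power $s^N$ lies in $\cap_{i>1} \mathfrak{a}_i$ with $s^N \notin \mathfrak{q}$, so that $s^N \cdot \mathfrak{a}_1 \subseteq Jac(f_t)$.

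Granted the identification $Jac_{gen}(f_t) = s_\mathfrak{q}(Jac(f_t))$, two applications of Lemma~\ref{lemma:s_q_and_stuff} combined with the identity above give
\begin{equation*}
Jac_{gen}(f_t) + \langle t\rangle = s_\mathfrak{q}\bigl(Jac(f_t) + \langle t\rangle\bigr) = s_\mathfrak{q}\bigl(Jac(f) \cdot R + \langle t\rangle\bigr) = s_\mathfrak{q}(Jac(f) \cdot R) + \langle t\rangle,
\end{equation*}
and it remains to identify $s_\mathfrak{q}(Jac(f) \cdot R) + \langle t\rangle$ with $Jac_{gen}(f) \cdot R + \langle t\rangle$. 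The containment $\supseteq$ holds because any $h \in Jac_{gen}(f)$ comes with a witness $s \in \mathbb{C}\{\underline{x}\} \setminus \langle x_1, \dots, x_{n-1}\rangle$ satisfying $sh \in Jac(f)$, and such an $s$, viewed in $R$, lies outside $\mathfrak{q}$ since its restriction to the $x_n$-axis is nonzero. The reverse containment follows by reducing modulo $t$: for $r \in s_\mathfrak{q}(Jac(f) \cdot R)$ with $sr \in Jac(f) \cdot R$ and $s \notin \mathfrak{q}$, one has $\bar{s}\bar{r} \in Jac(f)$ in $\mathbb{C}\{\underline{x}\}$ with $\bar{s} \notin \langle x_1, \dots, x_{n-1}\rangle$, whence $\bar{r} \in Jac_{gen}(f)$ and $r \in Jac_{gen}(f) \cdot R + \langle t\rangle$. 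The main obstacle is the upgrade of Lemma~\ref{lemma:p_and_Jac(f_t)_and_t}, which hinges on excluding $\mathfrak{q}$ from the set of associated primes of $Jac(f_t)$; this is the step where the relative morsification hypothesis genuinely enters.
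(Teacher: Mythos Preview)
Your approach differs from the paper's: rather than verifying the two inclusions by separate hands-on computations, you aim to establish the single identity $Jac_{gen}(f_t)=s_\mathfrak{q}(Jac(f_t))$ and then let Lemma~\ref{lemma:s_q_and_stuff} together with $Jac(f_t)+\langle t\rangle=Jac(f)\cdot R+\langle t\rangle$ carry both directions at once. The downstream part of your argument is fine: the identification $s_\mathfrak{q}(Jac(f)\cdot R)+\langle t\rangle=Jac_{gen}(f)\cdot R+\langle t\rangle$ follows cleanly from $\mathfrak{q}\cap\mathbb{C}\{\underline{x}\}=I$, and the chain of saturations is sound once the upgrade is in place.

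The genuine gap is the upgrade itself. The equality $Jac_{gen}(f_t)=s_\mathfrak{q}(Jac(f_t))$ holds precisely when no associated prime of $R/Jac(f_t)$ other than $\mathfrak{p}$ lies inside $\mathfrak{q}$; any such prime would strictly contain $\mathfrak{p}$ and sit inside $\mathfrak{q}$, hence could only be $\mathfrak{q}$ itself, appearing as an \emph{embedded} prime. Your appeal to ``the structural input of Lemma~\ref{lemma:M(f_t)_CM}'' does not exclude this: that lemma and its proof describe only the \emph{irreducible components} of $V(Jac(f_t))$ --- namely $V(\mathfrak{p})$ together with the Morse-point curves --- and say nothing about embedded primes. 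The relative-morsification hypothesis does control the minimal primes (each Morse locus meets $V(\mathfrak{p})$ only at the origin, so none is contained in $\mathfrak{q}$), but it does not by itself prevent $\mathfrak{q}$ from being embedded in $R/Jac(f_t)$, which is not known to be Cohen--Macaulay. The paper avoids needing this identity by treating the inclusions separately: for $Jac_{gen}(f)\cdot R+\langle t\rangle\subset Jac_{gen}(f_t)+\langle t\rangle$ it uses only the trivial containment $s_\mathfrak{q}(Jac(f_t))\subset Jac_{gen}(f_t)$ combined with Lemma~\ref{lemma:s_q_and_stuff}; for the reverse inclusion it invokes Lemma~\ref{lemma:p_and_Jac(f_t)_and_t} as stated (producing $s\notin\mathfrak{p}\cup\langle t\rangle$), writes $s=s_0+t\tilde{s}$, and reduces modulo $t$ to land in $Jac(f)$.
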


\begin{proof}
    First, let $h \in Jac_{gen}(f_t) +\langle t \rangle$. Then there exists some $b \in R$ such that $h-bt \in Jac_{gen}(f_t)$, and we can assume that $h \notin \langle t\rangle$. By Lemma~\ref{lemma:p_and_Jac(f_t)_and_t}, there exists some $s \notin \mathfrak{p} \cup \langle t\rangle$ such that $s(h-bt) \in Jac(f_t)$. Now, we can write $s=s_0+t \tilde{s}$ where $0 \neq s_0 \in \mathbb{C}\{\underline{x}\}$, and we have that $s_0h + t(\tilde{s} h -bh) =\sum_{i=1}^n a_i \partial_i(f_t)$ for some $a_1, \dots, a_n \in R$. Now recall that $f_t=f+tg$ for $f,g \in \mathbb{C}\{\underline{x}\}$, so if we write $a_i=c_i +td_i$ for some $c_1, \dots, c_n \in \mathbb{C}\{\underline{x}\}$ and some $d_1, \dots, d_n \in R$, then we have that $s_0 h + t(\tilde{s}h -bh + \sum_{i=1}^n a_i\partial_i(g)  +\sum_{i=1}^n d_i \partial_i(f) ) =\sum_{i=1}^n c_i \partial_i(f) \in Jac(f) \subset \mathbb{C}\{\underline{x}\}$. If $c_i=0$ for every $i$ then $s_0h=0$, and thus $h =0$. Otherwise, by comparing coefficients we have that $s_0h \in Jac(f)$, which tells us that $h\in Jac_{gen}(f)$.  \\

    Second, let $h \in Jac_{gen}(f) \cdot R +\langle t \rangle$. Then there exists some $b \in R$, some $r_1, \dots, r_n \in R$, and some $c_1, \dots, c_n \in Jac_{gen}(f)$ such that $b-ht =\sum_i r_i c_i$, and we can assume that $h \in \mathbb{C}\{\underline{x}\}$. Thus there exists some $s \notin I \subset \mathbb{C}\{\underline{x}\}$ such that $sr_i \in Jac(f)$ for every $i$. Therefore there exists some $d_1, \dots, d_n \in R$ such that $s(h-bt) = \sum_{i=1}^n d_i \partial_i(f)$, and so $sh+t(\sum_{i=1}^n c_i \partial_i(g) -sb)=\sum_{i=1}^n d_i \partial_i(f_t) \in Jac(f_t)$. Hence $sh \in Jac(f_t) +\langle t \rangle $, which gives us that $ h \in s_\mathfrak{q}(Jac(f_t) +\langle t\rangle)$. But, from Lemma~\ref{lemma:s_q_and_stuff} we have that $s_\mathfrak{q}(Jac(f_t) +\langle t\rangle) = s_\mathfrak{q}(Jac(f_t)) +\langle t\rangle \subset Jac_{gen}(f_t) +\langle t \rangle $, and the result follows. 
\end{proof}

\begin{proposition}\label{prop:Jac_gen_f_t_and_f}
If $j(f) \neq 0$ then  $\sfrac{R}{Jac_{gen}(f_t)}$ is a Cohen-Macaulay module over $R$ of dimension $2$ and $j_0(Jac_{gen}(f_t)) = j(f)$. 
\end{proposition}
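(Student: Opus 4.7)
The plan is to prove the two assertions in sequence, using Proposition~\ref{prop:Jac_f_t_and_Jac_gen} as the bridge between $Jac_{gen}(f_t)$ and $Jac_{gen}(f)$. For the Cohen-Macaulay claim, the discussion preceding the proposition already gives $\dim(R/Jac_{gen}(f_t)) \leq 2$ and that $t$ is a non zero-divisor on this quotient. Since $Jac(f_t) \subset \mathfrak{p}$ we have $Jac_{gen}(f_t) = R \cap Jac(f_t)_\mathfrak{p} \subset \mathfrak{p}$, which forces $\dim(R/Jac_{gen}(f_t)) \geq \dim(R/\mathfrak{p}) = 2$, so equality holds. For the depth, I would apply Proposition~\ref{prop:Jac_f_t_and_Jac_gen} to obtain
\[
R/(Jac_{gen}(f_t) + \langle t \rangle) \;\cong\; \mathbb{C}\{\underline{x}\}/Jac_{gen}(f),
\]
which is Cohen-Macaulay of dimension $1$ by Proposition~\ref{prop:Jac_gen_is_generification}. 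Combined with the fact that $t$ is a non zero-divisor, this shows that $R/Jac_{gen}(f_t)$ has depth $2$, hence is Cohen-Macaulay.

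For the equality $j_0(Jac_{gen}(f_t)) = j(f)$, I would first unwind the definition via Remark~\ref{remark:jac_gen} as
\[
j_0(Jac_{gen}(f_t)) = \dim_\mathbb{C}\bigl(Jac_{gen}(f_t)/(Jac(f_t) + t \cdot Jac_{gen}(f_t))\bigr),
\]
and then construct the natural reduction map
\[
\phi\colon Jac_{gen}(f_t) \longrightarrow Jac_{gen}(f)/Jac(f), \qquad h \mapsto [h \bmod t].
\]
Well-definedness and surjectivity both follow at once from Proposition~\ref{prop:Jac_f_t_and_Jac_gen}, which identifies the image of $Jac_{gen}(f_t)$ in $R/\langle t \rangle \cong \mathbb{C}\{\underline{x}\}$ with $Jac_{gen}(f)$. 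The main obstacle is computing the kernel: the containment $Jac(f_t) + t \cdot Jac_{gen}(f_t) \subset \ker(\phi)$ is immediate. For the reverse containment, given $h \in \ker(\phi)$, I would write $h = q_0 + tr$ with $q_0 \in Jac(f)\cdot R$ and $r \in R$, and then use the identity $\partial_i(f) = \partial_i(f_t) - t\,\partial_i(g)$ to rewrite $q_0 = \tilde{q} + t s$ with $\tilde{q} \in Jac(f_t)$ and $s \in R$. This gives $h - \tilde{q} = t(r+s)$, and since $\tilde{q}, h \in Jac_{gen}(f_t)$, Step~1 ensures that $t$ is a non zero-divisor on $R/Jac_{gen}(f_t)$, which lets us conclude $r+s \in Jac_{gen}(f_t)$. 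Therefore $h \in Jac(f_t) + t \cdot Jac_{gen}(f_t)$, so $\phi$ descends to an isomorphism establishing $j_0(Jac_{gen}(f_t)) = j(f)$.
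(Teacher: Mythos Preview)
Your proof is correct and follows essentially the same strategy as the paper: both parts hinge on Proposition~\ref{prop:Jac_f_t_and_Jac_gen} together with the fact that $t$ is a non zero-divisor on $R/Jac_{gen}(f_t)$, and the equality $j_0(Jac_{gen}(f_t))=j(f)$ is established via an explicit isomorphism and the first isomorphism theorem. The only cosmetic difference is the direction of the map---the paper defines $\Phi\colon Jac_{gen}(f)\to M(Jac_{gen}(f_t))\otimes R/\langle t\rangle$ and asserts its kernel is $Jac(f)$ ``by direct computation,'' whereas you go the other way and spell out the kernel argument in detail; your version is in fact more transparent on this point.
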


\begin{proof}
First, recall that from Proposition~\ref{prop:Jac_gen_is_generification} we have that $\sfrac{\mathbb{C}\{\underline{x}\}}{Jac_{gen}(f)}$ is a Cohen-Macaulay module over $\mathbb{C}\{\underline{x}\}$ of dimension $1$. Therefore, it is enough to prove that $t$ is a non zero-divisor of $\sfrac{R}{Jac_{gen}(f_t)}$ and that $\sfrac{R}{Jac_{gen}(f_t)} \otimes_R \sfrac{R}{\langle t \rangle } \cong \sfrac{\mathbb{C}\{\underline{x}\}}{Jac_{gen}(f)}$. First, assume that $th \in Jac_{gen}(f_t)$. Then there exists some $s \notin \mathfrak{p}$ such that $sth \in Jac(f_t)$, but since $t \notin \mathfrak{p}$ as well, then we have that $h \in Jac_{gen}(f_t)$. Therefore $t$ is a non zero-divisor of $\sfrac{R}{Jac_{gen}(f_t)}$. Second, from Proposition~\ref{prop:Jac_f_t_and_Jac_gen} we have that $Jac_{gen}(f_t) +\langle t \rangle = Jac_{gen}(f) \cdot R +\langle t \rangle$, and so $\sfrac{R}{Jac_{gen}(f_t)} \otimes_R \sfrac{R}{\langle t \rangle } \cong \sfrac{R}{Jac_{gen}(f_t) +\langle t \rangle} \cong \sfrac{R}{Jac_{gen}(f)\cdot R +\langle t \rangle} \cong \sfrac{\mathbb{C}\{\underline{x}\}}{Jac_{gen}(f)}$.\\

Now in order to prove that $j_0(Jac_{gen}(f_t)) = j(f)$ it is enough to prove that $\sfrac{Jac_{gen}(f)}{Jac(f)} \cong \sfrac{Jac_{gen}(f_t)}{Jac(f_t)} \otimes_R \sfrac{R}{\langle t \rangle }$. Since $Jac_{gen}(f_t) +\langle t \rangle = Jac_{gen}(f) \cdot R +\langle t \rangle$, then every element of $h \in Jac(f) \subset \mathbb{C}\{\underline{x}\}$ can be written as $h = h_0+tb$ for $h_0 \in Jac_{gen}(f_t)$ and $b \in R$. Therefore we can define a map $\Phi \colon Jac_{gen}(f) \to \sfrac{Jac_{gen}(f_t)}{Jac(f_t)} \otimes_R \sfrac{R}{\langle t \rangle }$ by $\Phi(h) = (h_0 + Jac(f_t)) \otimes (1+\langle t \rangle)$. Then by direct computation we can conclude that $\Phi$ is a well defined map. surjective, and $\ker(\Phi)=Jac(f)$. Thus, the result follows from the first isomorphism theorem.  

\end{proof}

\begin{theorem}\label{thm:main_Jac_gen}
For every relative morsification $f_t$ of $f$ and for every small enough $t_0$ we have that 
    \begin{equation*}
            j(f) \geq \# A_1(f_{t_0}) +\deg(\Delta^\perp(f)).
    \end{equation*}
\end{theorem}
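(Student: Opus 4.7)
The plan is to apply Theorem~\ref{thm:main} with the explicit generification $\mathfrak{a} = Jac_{gen}(f_t)$, and then use Proposition~\ref{prop:Jac_gen_f_t_and_f} to identify the resulting invariant $j_0(Jac_{gen}(f_t))$ with $j(f)$.

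First, I would dispose of the degenerate case $j(f) = 0$. In that case $Jac_{gen}(f) = Jac(f)$, so Proposition~\ref{prop:Jac_gen_is_generification} tells us that $\sfrac{\mathbb{C}\{\underline{x}\}}{Jac(f)}$ is Cohen-Macaulay of dimension $1$; Proposition~\ref{prop:trans_disc_LR} then forces $\Delta^\perp(f) = \emptyset$, and Remark~\ref{rem:empty_discr} yields $\# A_1(f_{t_0}) = 0$ for every small enough $t_0$, so the claimed inequality holds trivially with both sides equal to zero.

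For the main case $j(f) \neq 0$, the substantive step is verifying that $Jac_{gen}(f_t) = R \cap Jac(f_t)_\mathfrak{p}$ satisfies both clauses of the definition of a generification of $Jac(f_t)$ over $\mathfrak{p}$. The first clause (Cohen-Macaulay of dimension $2$ with $t$ a non-zero-divisor) is supplied directly by Proposition~\ref{prop:Jac_gen_f_t_and_f}. Within the second clause, the equality $Jac(f_t)_\mathfrak{p} = (Jac_{gen}(f_t))_\mathfrak{p}$ is immediate from the definition, while $\sqrt{Jac_{gen}(f_t)} = \mathfrak{p}$ follows from two observations: first, any $h \in Jac_{gen}(f_t)$ admits $s \notin \mathfrak{p}$ with $sh \in Jac(f_t) \subset \mathfrak{p}$, and primality of $\mathfrak{p}$ yields $Jac_{gen}(f_t) \subset \mathfrak{p}$, so $V(\mathfrak{p}) \subset V(Jac_{gen}(f_t))$; second, the Cohen-Macaulay condition makes all associated primes of $Jac_{gen}(f_t)$ minimal of height $n-1$, and the irreducible decomposition of $V(Jac(f_t))$ established in the proof of Lemma~\ref{lemma:M(f_t)_CM} shows that $V(\mathfrak{p})$ is the unique two-dimensional component of $V(Jac(f_t))$, hence of $V(Jac_{gen}(f_t)) \subset V(Jac(f_t))$ as well.

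With these verifications in place, Theorem~\ref{thm:main} applied to $\mathfrak{a} = Jac_{gen}(f_t)$ delivers $j_0(Jac_{gen}(f_t)) \geq \# A_1(f_{t_0}) + \deg(\Delta^\perp(f))$, and the second half of Proposition~\ref{prop:Jac_gen_f_t_and_f} rewrites the left-hand side as $j(f)$. The argument is essentially bookkeeping on top of previously proven results; the only real obstacle is the structural verification that $Jac_{gen}(f_t)$ is a generification, which reduces to the identification of $\sqrt{Jac_{gen}(f_t)}$ with $\mathfrak{p}$ via the component-counting above.
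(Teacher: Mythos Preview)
Your proof is correct and follows essentially the same approach as the paper: verify that $Jac_{gen}(f_t)$ is a generification of $Jac(f_t)$ over $\mathfrak{p}$, invoke Theorem~\ref{thm:main}, and then identify $j_0(Jac_{gen}(f_t))$ with $j(f)$ via Proposition~\ref{prop:Jac_gen_f_t_and_f}. The only cosmetic difference is that the paper splits off the degenerate case as $\Delta^\perp(f)=\emptyset$ rather than $j(f)=0$; your split is arguably cleaner since Proposition~\ref{prop:Jac_gen_f_t_and_f} is stated under the hypothesis $j(f)\neq 0$, and you also make explicit the verification of $\sqrt{Jac_{gen}(f_t)}=\mathfrak{p}$, which the paper simply asserts.
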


\begin{proof}
    If $\Delta^\perp(f) =\emptyset$ then from Remark~\ref{rem:empty_discr} we have that $\#A_1(f_t)=0$ and the result is true vacuously. Otherwise, From Proposition~\ref{prop:Jac_gen_f_t_and_f} we have that $Jac_{gen}(f_t)$ is a generification of $Jac(f_t)$ and the result follows from Theorem~\ref{thm:main}. 
\end{proof}

\begin{example}\label{example:uuuuuu}

\textup{One can calculate the following examples:}

\begin{center}
\hspace*{-1.3cm}
    \begin{tabular}{ | l | l | l | l |}
    \hline
    $f$ & $j(f)$   & $\deg (\Delta^\perp(f))$ \\ \hline \hline
    $x^p+y^pz$ & $p-1$ & $p-1$  \\ \hline
    $x^p+y^pz^q+y^{p+1}$ & $(2q-1)(p-1)$ &  $q(p-1)$ \\ \hline
    $x^pz^{q_1}+y^pz^{q_2}+y^{p+1}+x^{p+1}$ &  $(p-2)(2(q_1+q_2)-1)+2(q_1+q_2)$ & $(q_1+q_2)(p-1)$ \\ \hline 
    $\prod_{i=1}^n (x^{p_i}+y^{p_i}z)$ &  $\sum_{i=1}^n (p_i-1)$  & $\sum_{i=1}^n (p_i-1)$ \\ \hline
    $\prod_{i=1}^n(x^{p_i}+y^{p_i}z^{q_i}+y^{p_i+1})$ & $\sum_{i=1}^n (2q_i-1)(p_i-1)$ & $\sum_{i=1}^n q_i(p_i-1)$ \\ \hline 
    $\sum_{i=1}^{n-2} (x_i^p) +x_{n-1}^px_n^q +x_{n-1}^{p+1}$ &  $(2q-1)(p-1)^{n-2}$ & $q(p-1)^{n-2}$ \\ \hline
    $\sum_{i=1}^{n-2} (x_i^p x_n^{q_i}+x_i^{p+1})$ & $2^{n-2}(\sum_{i=1}^{n-2} q_i)(p-1)^{n-2}+ (2^{n-2}-1)p$ & $(\sum_{i=1}^n q_i)(p-1)^{n-2}$ \\ \hline 
    
    \end{tabular}
    
    \vspace{5mm}

\end{center}
\end{example}

Comparing the table above with Example~\ref{example:table_mors} we can see that Theorem~\ref{thm:main_Jac_gen} is true in all of these cases (and we even have an equality). Yet, the following example shows that the inequality in Theorem~\ref{thm:main_Jac_gen} need not be an equality.  

\begin{example}\label{example:inequality}
\textup{Let $f(x,y,z)=2k(x^{3k} +y^{3k}) -3k z^2 x^{2k} y^{2k}$ for $k \geq 2$. As we saw in Remark~\ref{rem:counterex}, $\Delta^\perp(f) = \emptyset$ but $Jac(f) \neq Jac_{gen}(f)$, since $\sfrac{\mathbb{C}\{\underline{x}\}}{Jac(f)}$ is not a Cohen-Macaulay ring, in contrast to  $\sfrac{\mathbb{C}\{\underline{x}\}}{Jac_{gen}(f)}$, as we have seen in Proposition~\ref{prop:Jac_gen_is_generification}. Thus we get that $j(f) > 0$ but $\deg(\Delta^\perp(f))=0=\# A_1(f_{t_0})$ for every small $t_0$.}
\end{example}

\section{Milnor Number of Yomdin-Type Singularities}\label{sec:milnor}
In this section we discuss how we can apply the results from Section~\ref{section:SPJ} to generalize some of the results in~\cite{pellikaan1989series}, which in addition gives us a bound for $\deg(\Delta^\perp(f))$ by an algebraic invariant. We start by recalling a few results from Section 3 of~\cite{pellikaan1989series}, which we use throughout this section.

\begin{proposition}\label{prop:iso_sing_f+x_n^k}
    Let $f \in I^p \setminus I^{p+1}$ with $\Sing(V(f))=V(I)$. Denote $J_{n-1}(f)=\langle \partial_1(f), \dots, \partial_{n-1}(f) \rangle \subset \mathbb{C}\{\underline{x}\}$. Then 
    \begin{enumerate}
        \item $\sfrac{R}{{J_{n-1}(f)}}$ is a Cohen-Macaulay ring of dimension $1$.
        \item For large enough $ k\in \mathbb{N}$ we have that $f+x_n^k$ has an isolated singularity at the origin and its Milnor number is
\begin{equation*}
    \mu(f+x_n^k)=j(f)+(k-1)(p-1)^{n-1}+ \dim_\mathbb{C}(\sfrac{Jac(f)}{Jac_{gen}(f) \cap Jac(f+x_n^k)}). 
\end{equation*}
    \item For large enough $k \in \mathbb{N}$ we have that  $Jac_{gen}(f) \cap Jac(f+x_n^k) = J_{n-1}(f) + \langle \partial_n(f) \rangle \cdot Jac_{gen}(f)$. 
    \end{enumerate}
\end{proposition}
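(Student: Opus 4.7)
For part (1), the plan is to show $ht(J_{n-1}(f)) = n-1$ in $\mathbb{C}\{\underline{x}\}$; once this holds, the $n-1$ generators form a regular sequence in the regular local ring, so $\mathbb{C}\{\underline{x}\}/J_{n-1}(f)$ is a complete intersection, hence Cohen-Macaulay of dimension $1$. The bound $ht(J_{n-1}(f)) \leq n-1$ is automatic since $\partial_i f \in I^{p-1}$ for $i < n$, giving $V(I) \subseteq V(J_{n-1}(f))$. For the opposite bound, I use the ordinary multiple point hypothesis: at a generic point of $V(I)$, the restriction of $f$ to a transverse slice $L$ is an isolated singularity, so $\partial_1 f|_L, \dots, \partial_{n-1} f|_L$ cut out a $0$-dimensional subscheme of $L$; hence $V(J_{n-1}(f))$ has dimension $1$ at every generic point of $V(I)$, which forces the germ $V(J_{n-1}(f))$ to have dimension $1$ and coincide with $V(I)$.

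For the isolated-singularity part of (2), decompose $Jac(f+x_n^k) = J_{n-1}(f) + \langle \partial_n f + k x_n^{k-1}\rangle$. Since $f \in I^p$ with $p \geq 2$, we have $\partial_n f \in I$. Writing $S = \mathbb{C}\{\underline{x}\}/J_{n-1}(f)$, part (1) gives that $S$ is Cohen-Macaulay of dimension $1$ with unique associated prime $I$. The image of $\partial_n f + k x_n^{k-1}$ modulo $I$ is $k x_n^{k-1} \neq 0$, so this element is a non-zero-divisor in $S$, and $\mathbb{C}\{\underline{x}\}/Jac(f+x_n^k) = S/\langle \partial_n f + k x_n^{k-1}\rangle$ is Artinian. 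Hence $f + x_n^k$ has an isolated singularity at the origin.

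For the Milnor number formula, set $h = \partial_n f + k x_n^{k-1}$, so that $\mu(f+x_n^k) = \dim_\mathbb{C}(S/hS)$. Using the short exact sequence
\[ 0 \to Jac(f+x_n^k)/J_{n-1}(f) \to S \to \mathbb{C}\{\underline{x}\}/Jac(f+x_n^k) \to 0 \]
together with the filtration $J_{n-1}(f) \subseteq Jac(f) \subseteq Jac_{gen}(f) \cap Jac(f+x_n^k) \subseteq Jac(f+x_n^k)$, I decompose $\mu(f+x_n^k)$ into three contributions. A Yomdin-type transversal computation yields $(k-1)(p-1)^{n-1}$, corresponding to the Brieskorn Milnor number of the generic transversal singularity $\sum_{i<n} x_i^p + x_n^k$. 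The Pellikaan-style Cohen-Macaulay defect of $Jac(f)$ contributes $j(f) = \dim_\mathbb{C}(Jac_{gen}(f)/Jac(f))$. The residual discrepancy between $Jac(f)$ and $Jac_{gen}(f) \cap Jac(f+x_n^k)$ accounts for the last term $\dim_\mathbb{C}(Jac(f)/(Jac_{gen}(f) \cap Jac(f+x_n^k)))$.

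For part (3), the inclusion $\supseteq$ is the easier direction: $J_{n-1}(f)$ sits inside both $Jac_{gen}(f)$ and $Jac(f+x_n^k)$, and for $g \in Jac_{gen}(f)$ the identity $g \partial_n f = g h - k g x_n^{k-1}$ puts $g \partial_n f$ into $Jac(f+x_n^k)$ provided $g x_n^{k-1} \in J_{n-1}(f)$, which holds for $k$ large thanks to the $I$-primary behavior of $Jac_{gen}(f)/J_{n-1}(f)$. The reverse inclusion is the main obstacle: given $r \in Jac_{gen}(f) \cap Jac(f+x_n^k)$, write $r = \sum_{i<n} a_i \partial_i f + a_n h$ and use the characterization of $Jac_{gen}(f)$ as the $I$-saturation of $Jac(f)$, together with $h$ being a non-zero-divisor on $S$, to show that $a_n$ itself may be taken in $Jac_{gen}(f)$; absorbing the resulting $k x_n^{k-1}$ factor into $J_{n-1}(f)$ again requires $k$ large. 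The main technical difficulty throughout is quantifying this lower bound on $k$ in terms of the $I$-primary lengths of the modules appearing in the filtration.
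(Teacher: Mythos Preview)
The paper does not give a proof of this proposition at all; it is quoted as a summary of results from Section~3 of Pellikaan's paper on series of isolated singularities. So there is no ``paper's proof'' to compare against, and what matters is whether your sketch stands on its own. Parts of it do, but there are two genuine errors.

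\medskip

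\textbf{Part (2), the filtration is reversed.} You write the chain $J_{n-1}(f)\subseteq Jac(f)\subseteq Jac_{gen}(f)\cap Jac(f+x_n^k)\subseteq Jac(f+x_n^k)$, but the middle inclusion goes the other way. Indeed $Jac(f)\subseteq Jac(f+x_n^k)$ would force $\partial_n f\in Jac(f+x_n^k)$, hence $\delta(f)=0$, which is false in general (cf.\ Lemma~\ref{lemma:delta_zero}). The correct chain is $Jac_{gen}(f)\cap Jac(f+x_n^k)\subseteq Jac(f)\subseteq Jac_{gen}(f)$: given $r$ in the intersection, write $r\equiv a_n h\pmod{J_{n-1}(f)}$ and use that $x_n$ is a non-zero-divisor on $\mathbb{C}\{\underline{x}\}/Jac_{gen}(f)$ to deduce $a_n\in Jac_{gen}(f)$; then $x_n^{k-1}Jac_{gen}(f)\subseteq Jac(f)$ for $k$ large (since $j(f)<\infty$) gives $r\in Jac(f)$. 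With this corrected chain, the decomposition
\[
\mu(f+x_n^k)=\dim_\mathbb{C}\!\frac{\mathbb{C}\{\underline{x}\}}{Jac_{gen}(f)+\langle x_n^{k-1}\rangle}+\dim_\mathbb{C}\!\frac{Jac_{gen}(f)}{Jac(f)}+\dim_\mathbb{C}\!\frac{Jac(f)}{Jac_{gen}(f)\cap Jac(f+x_n^k)}
\]
follows from the second isomorphism theorem, and the first summand equals $(k-1)(p-1)^{n-1}$ because $x_n$ is a non-zero-divisor modulo $Jac_{gen}(f)$ with $\dim_\mathbb{C}(\mathbb{C}\{\underline{x}\}/(Jac_{gen}(f)+\langle x_n\rangle))=(p-1)^{n-1}$. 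Your ``Yomdin-type transversal computation'' gesture at the Brieskorn number is not a substitute for this.

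\medskip

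\textbf{Part (3), the easy inclusion is not argued correctly.} You claim $g\,x_n^{k-1}\in J_{n-1}(f)$ for $g\in Jac_{gen}(f)$ and $k$ large, appealing to ``$I$-primary behaviour of $Jac_{gen}(f)/J_{n-1}(f)$''. This is false: $Jac_{gen}(f)/J_{n-1}(f)$ is a submodule of the Cohen--Macaulay ring $S=\mathbb{C}\{\underline{x}\}/J_{n-1}(f)$, whose only associated prime is $I$; since $x_n\notin I$, multiplication by $x_n$ is injective on $S$ and hence on every submodule. In particular, if $\partial_n f\notin J_{n-1}(f)$ then $x_n^{k-1}\partial_n f\notin J_{n-1}(f)$ for every $k$. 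What you actually need is the weaker statement $g\,x_n^{k-1}\in Jac(f+x_n^k)$, and this does not follow from annihilation by a power of $x_n$; one must instead work inside $S$ and exploit that $h$ is a non-zero-divisor together with the finiteness of $Jac_{gen}(f)/Jac(f)$. Your argument for the reverse inclusion has the right shape (show $a_n\in Jac_{gen}(f)$ via the non-zero-divisor property of $x_n$), but again the absorption step at the end needs the same care.

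\medskip

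Your Part~(1) and the isolated-singularity half of Part~(2) are fine.
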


\begin{remark}\begin{enumerate}
    \item     \textup{
 Since $f \in I^p \setminus I^{p+1}$, the generic multiplicity of $Jac_{gen}(f)$ is $(p-1)$, and thus, since set theoretically, $V(Jac_{gen}(f))=V(x_1, \dots, x_{n-1})$,  we get that $\dim_\mathbb{C}(\sfrac{\mathbb{C}\{\underline{x}\}}{Jac_{gen}(f)+\langle x_n \rangle})=\text{mult}(Jac_{gen}(f), \langle x_n \rangle)=(p-1)^{n-1}$. }

    \item \textup{Recall that the Milnor number of some $g \in \mathbb{C}\{\underline{x}\}$ is defined to be $\mu(g)=\dim_\mathbb{C}(\sfrac{\mathbb{C}\{\underline{x}\}}{Jac(f)})$. The milnor number plays a crucial role in the theory of isolated singularities, as discussed in length in Section 2 of Chapter 1 of`\cite{greuel2007introduction}. } 
\end{enumerate}

\end{remark}

We are now interested in computing and bounding the complex dimension of $\sfrac{Jac(f)}{Jac_{gen}(f) \cap Jac(f+x_n^k)}$ (for large enough $k$), which we denote by ${\delta}(f)$. Note that $\delta(f)$ is finite since $\mu(f +x_n^k)$ is finite for large enough $k$ and $\delta(f) \leq \mu(f +x_n^k)$, from Proposition~\ref{prop:iso_sing_f+x_n^k}. In addition, note that $\delta(f)$ does not depend on $k$, since by Proposition~\ref{prop:iso_sing_f+x_n^k} we have that $\sfrac{Jac(f)}{Jac_{gen}(f) \cap Jac(f+x_n^k)} \cong \sfrac{Jac(f)}{J_{n-1}(f) + \langle \partial_n(f) \rangle \cdot Jac_{gen}(f)}$ (for large enough $k$).\\

\begin{proposition}\label{prop:delta_colon}
    $ \sfrac{\mathbb{C}\{\underline{x}\}}{(Jac(f+x_n^k) \colon \langle \partial_n(f) \rangle )} \cong  \sfrac{Jac(f)}{Jac_{gen}(f) \cap Jac(f+x_n^k)}$
\end{proposition}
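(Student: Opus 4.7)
The plan is to construct a $\mathbb{C}\{\underline{x}\}$--module homomorphism $\varphi \colon \mathbb{C}\{\underline{x}\} \to \sfrac{Jac(f)}{Jac_{gen}(f) \cap Jac(f+x_n^k)}$ whose kernel is exactly the colon ideal, and then invoke the first isomorphism theorem. The natural choice is $\varphi(r) = [r \cdot \partial_n(f)]$. This is well-defined (as a map into the quotient of $Jac(f)$) because $\partial_n(f) \in Jac(f)$, so $r \cdot \partial_n(f) \in Jac(f)$ for every $r \in \mathbb{C}\{\underline{x}\}$.

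For surjectivity, I would invoke part (3) of Proposition~\ref{prop:iso_sing_f+x_n^k}, which gives (for $k$ large)
\begin{equation*}
Jac_{gen}(f) \cap Jac(f+x_n^k) = J_{n-1}(f) + \langle \partial_n(f) \rangle \cdot Jac_{gen}(f).
\end{equation*}
In particular $J_{n-1}(f)$ sits inside the denominator. Hence an arbitrary element $\sum_{i=1}^{n-1} a_i \partial_i(f) + b\,\partial_n(f) \in Jac(f)$ reduces modulo the denominator to $[b \cdot \partial_n(f)] = \varphi(b)$, so $\varphi$ is surjective.

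For the kernel, note that $r \in \ker(\varphi)$ if and only if $r \cdot \partial_n(f) \in Jac_{gen}(f) \cap Jac(f+x_n^k)$. But $r \cdot \partial_n(f) \in Jac(f) \subseteq Jac_{gen}(f)$ holds automatically, so the $Jac_{gen}(f)$ condition is free, and the requirement collapses to $r \cdot \partial_n(f) \in Jac(f+x_n^k)$, i.e.\ $r \in (Jac(f+x_n^k) \colon \langle \partial_n(f) \rangle)$. Applying the first isomorphism theorem yields the claimed isomorphism.

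There is no serious obstacle here; the proof is essentially a bookkeeping exercise, and its cleanness depends entirely on the identification in Proposition~\ref{prop:iso_sing_f+x_n^k}(3) (which lets us kill $J_{n-1}(f)$ for surjectivity) and on the trivial containment $Jac(f) \subseteq Jac_{gen}(f)$ (which trivialises one half of the kernel computation). The only point requiring a little care is confirming that $k$ is taken large enough for Proposition~\ref{prop:iso_sing_f+x_n^k} to apply, but this is already the hypothesis under which $\delta(f)$ is defined.
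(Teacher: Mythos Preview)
Your proof is correct and follows essentially the same route as the paper: define $\varphi(r) = [r\cdot\partial_n(f)]$, check surjectivity, compute the kernel as the colon ideal, and apply the first isomorphism theorem. The only difference is that for surjectivity you invoke Proposition~\ref{prop:iso_sing_f+x_n^k}(3), whereas the paper (and you, implicitly) only needs the elementary observation that $\partial_i(f) = \partial_i(f+x_n^k) \in Jac(f+x_n^k)$ for $i<n$ together with $Jac(f)\subseteq Jac_{gen}(f)$; the full identity $Jac_{gen}(f)\cap Jac(f+x_n^k) = J_{n-1}(f) + \langle\partial_n(f)\rangle\cdot Jac_{gen}(f)$ is overkill here.
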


\begin{proof}
Note that $\sfrac{Jac(f)}{Jac_{gen}(f) \cap Jac(f+x_n^k)}$ is generated by $\partial_n(f)$, since $\partial_i(f) \in Jac(f+x_n^k)$ for every $i<n$. Thus we have a surjective map 
\begin{equation*}
    \varphi \colon \mathbb{C}\{\underline{x}\} \to \sfrac{Jac(f)}{Jac_{gen}(f) \cap Jac(f+x_n^k)}
\end{equation*}

defined by $\varphi(r)=r \cdot \partial_n(f)$ with $\ker (\varphi)=\text{Ann}( \sfrac{Jac(f)}{Jac_{gen}(f) \cap Jac(f+x_n^k)})$. Since $\partial_i(f) \in Jac(f+x_n^k)$ for every $i<n$, we get that $\ker (\varphi)=(Jac(f+x_n^k) \colon \langle \partial_n(f) \rangle )$. Hence, from the first isomorphism theorem we get that 
\begin{equation*}
    \sfrac{Jac(f)}{Jac_{gen}(f) \cap Jac(f+x_n^k)} \cong \sfrac{\mathbb{C}\{\underline{x}\}}{(Jac(f+x_n^k) \colon \langle \partial_n(f) \rangle )},
\end{equation*}

 as desired. 
\end{proof}

\begin{lemma} \label{lemma:CM_delta}
    Let $f_t$ be a relative morsification of $f$. Then for a large enough $k$ we have that $\sfrac{R}{(Jac(f_t+x_n^k) \colon \langle \partial_n(f_t) \rangle )}$ is a Cohen-Macaulay module over $R$ of dimension $1$ and 
    \begin{equation*}
        \sfrac{R}{(Jac(f_t+x_n^k) \colon \langle \partial_n(f_t) \rangle )} \otimes \sfrac{R}{\langle t \rangle} \cong \sfrac{\mathbb{C}\{\underline{x}\}}{(Jac(f+x_n^k) \colon \langle \partial_n(f) \rangle )},
    \end{equation*} 
    where, as in Notation~\ref{def:all_hail_his_satanic_majesty}, we denote $R=\mathbb{C}\{\underline{x},t\}$. 
\end{lemma}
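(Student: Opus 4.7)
The plan is to realize the module $M := \sfrac{R}{(Jac(f_t+x_n^k):\langle\partial_n(f_t)\rangle)}$ as a principal submodule of a complete intersection over $R$, and then derive both the Cohen--Macaulay property and the specialization claim from the $\mathbb{C}\{t\}$-module structure inherited from the deformation direction.

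First I would argue that for $k$ sufficiently large, the ideal $Jac(f_t+x_n^k)\subset R$ defines a $1$-dimensional subscheme of $\mathbb{C}^{n+1}$: each small fiber $V(Jac(f_{t_0}+x_n^k))$ is an isolated singularity by a uniform version of Proposition~\ref{prop:iso_sing_f+x_n^k}(2), and these singular points sweep out a curve as $t$ varies. Since $Jac(f_t+x_n^k)$ is then generated by $n$ elements in the regular ring $R$ of dimension $n+1$, the quotient $S := \sfrac{R}{Jac(f_t+x_n^k)}$ is a complete intersection, hence Cohen--Macaulay of dimension~$1$ over $R$. Moreover $t$ is a non-zerodivisor on $S$ since no irreducible component of the singular curve is contained in $V(t)$, and $\sfrac{S}{tS} \cong \sfrac{\mathbb{C}\{\underline{x}\}}{Jac(f+x_n^k)}$ is the finite-dimensional Milnor algebra of $f+x_n^k$. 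By Nakayama, $S$ is then a finitely generated torsion-free, hence free, module over the DVR $\mathbb{C}\{t\}$.

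Next, arguing exactly as in the proof of Proposition~\ref{prop:delta_colon}, multiplication by $\partial_n(f_t)$ yields an isomorphism $M \cong \partial_n(f_t)\cdot S$ of $R$-modules, realizing $M$ as a cyclic submodule of $S$. As a $\mathbb{C}\{t\}$-submodule of the free $\mathbb{C}\{t\}$-module $S$, $M$ is itself $\mathbb{C}\{t\}$-free; in particular $t$ is a non-zerodivisor on $M$ and so $\mathrm{depth}_R(M)\geq 1$. Combined with $\dim_R M \leq \dim_R S = 1$, this shows that $M$ is Cohen--Macaulay of dimension~$1$ over~$R$ (or zero, in the degenerate case $\partial_n(f_t)\in Jac(f_t+x_n^k)$), settling the first claim. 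The natural map $\bar\iota\colon M/tM \to S/tS$ sends $\overline{\partial_n(f_t)}$ to $\overline{\partial_n(f)}$, so its image equals $\partial_n(f)\cdot(S/tS) \cong \sfrac{\mathbb{C}\{\underline{x}\}}{(Jac(f+x_n^k):\langle\partial_n(f)\rangle)}$ by Proposition~\ref{prop:delta_colon} applied to $f$.

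The main obstacle will be to prove that $\bar\iota$ is injective. Because $M$ is $\mathbb{C}\{t\}$-free we have $\mathrm{Tor}_1^R(M,R/\langle t\rangle)=0$, and the long exact sequence coming from $0\to M\to S\to S/M\to 0$ reduces injectivity of $\bar\iota$ to the vanishing of $\mathrm{Tor}_1^R(S/M,R/\langle t\rangle)$, equivalently to $t$ being a non-zerodivisor on $S/M$. A direct computation gives $S/M = \sfrac{R}{Jac(f_t)+\langle x_n^{k-1}\rangle}$, and I would handle this non-zerodivisor condition by invoking the description of $V(Jac(f_t))$ for a relative morsification (the $2$-dimensional component $V(\mathfrak{p})$ together with finitely many curves of Morse points, cf.\ Remark~\ref{rem:24}): after cutting by $V(x_n^{k-1})$, the only irreducible component through the origin is $V(\langle x_1,\dots,x_n\rangle)$, which projects dominantly onto the $t$-axis and therefore does not lie in $V(t)$. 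This argument is in the same spirit as the generification results developed in Section~\ref{section:SPJ}.
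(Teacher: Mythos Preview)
Your approach closely parallels the paper's, repackaged in more module-theoretic language. For the Cohen--Macaulay claim the two arguments are essentially the same: the paper also observes that $S=R/Jac(f_t+x_n^k)$ is a one-dimensional complete intersection on which $t$ is a non-zerodivisor, and deduces directly that if $tr\in(Jac(f_t+x_n^k):\partial_n(f_t))$ then $tr\,\partial_n(f_t)\in Jac(f_t+x_n^k)$, hence $r\,\partial_n(f_t)\in Jac(f_t+x_n^k)$, hence $r$ lies in the colon ideal. Your phrasing via ``$M\cong\partial_n(f_t)\cdot S$ is a $\mathbb{C}\{t\}$-submodule of the free module $S$, hence itself free'' is a pleasant reformulation of the same step.

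For the specialization the paper proceeds differently: it proves the ideal equality
\[
(Jac(f_t+x_n^k):\partial_n(f_t)) + \langle t\rangle \;=\; (Jac(f+x_n^k):\partial_n(f))\cdot R + \langle t\rangle
\]
by a direct coefficient comparison (expand in powers of $t$ and read off the constant term) for one inclusion, and appeals to a base-change principle for colon ideals for the other. Your reduction of injectivity of $\bar\iota$ to ``$t$ is a non-zerodivisor on $S/M$'' via the $\mathrm{Tor}$ long exact sequence is correct and is in fact equivalent to what is needed for the paper's second inclusion. The gap is in your justification of this non-zerodivisor condition: your component analysis of $S/M=R/(Jac(f_t)+\langle x_n^{k-1}\rangle)$ only controls the \emph{minimal} primes. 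This ideal is not a complete intersection, and nothing in the description of $V(Jac(f_t))$ from Remark~\ref{rem:24} rules out the maximal ideal $\mathfrak{m}$ appearing as an \emph{embedded} prime of $S/M$ --- in which case $t\in\mathfrak{m}$ would be a zerodivisor. (A minor aside: your claim that $V(\langle x_1,\dots,x_n\rangle)$ is the \emph{only} component after cutting by $x_n^{k-1}$ is also not quite right, since a Morse-point curve $V(\mathfrak{q}_i)$ with $x_n\in\mathfrak{q}_i$ would survive the cut; but such a curve also avoids $V(t)$, so this does not affect your conclusion about minimal primes.) To close this gap you would need either a depth/unmixedness argument for $S/M$, or to revert to the paper's explicit coefficient computation.
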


\begin{proof}
    First, from Proposition~\ref{prop:iso_sing_f+x_n^k} we have that $\sfrac{\mathbb{C}\{\underline{x}\}}{Jac(f+x_n^k)}$ is zero dimensional for large enough $k$. Thus $\sfrac{R}{Jac(f_t+x_n^k)}$ is one dimensional in this case, and since we know that $Jac(f_t+x_n^k) \subset (Jac(f_t+x_n^k) \colon \langle \partial_n(f_t) \rangle )$, we can conclude that that $\dim(\sfrac{R}{(Jac(f_t+x_n^k) \colon \langle \partial_n(f_t) \rangle )}) \leq 1$. Therefore it is enough to show that $t$ is a non zero-divisor of $\sfrac{R}{(Jac(f_t+x_n^k) \colon \langle \partial_n(f_t) \rangle )}$. Assume that $tr \in (Jac(f_t+x_n^k) \colon \langle \partial_n(f_t) \rangle )$, and then $tr \partial_n(f_t) \in Jac(f_t+x_n^k)$. But  $\sfrac{\mathbb{C}\{\underline{x}\}}{Jac(f+x_n^k)}$ is zero dimensional and $Jac(f+x_n^k)$ is generated by $n$ elements in $R$, therefore $\sfrac{R}{Jac(f_t+x_n^k)}$ is a Cohen-Macaulay module and in particular, $t$ is a non zero-divisor of $\sfrac{R}{Jac(f_t+x_n^k)}$. Hence since $tr \partial_n(f_t) \in Jac(f_t+x_n^k)$ we get that $r \partial_n(f_t) \in Jac(f_t+x_n^k)$, which gives us that $r \in (Jac(f_t+x_n^k) \colon \langle \partial_n(f_t) \rangle )$. Thus $t$ is a non zero-divisor and we can conclude that $\sfrac{R}{(Jac(f_t+x_n^k) \colon \langle \partial_n(f_t) \rangle )}$ is a Cohen-Macaulay module of dimension $1$. \\

    Second, in order to finish the proof it is enough to show that that
    \begin{equation*}
        (Jac(f_t+x_n^k) \colon \langle \partial_n(f_t) \rangle ) + \langle t \rangle = (Jac(f+x_n^k) \colon \langle \partial_n(f) \rangle ) \cdot R +\langle t \rangle.
    \end{equation*}
      Let $r \in (Jac(f_t+x_n^k) \colon \langle \partial_n(f_t) \rangle ) + \langle t \rangle$. Then there exists some $b \in R$ and $r_0 \in \mathbb{C}\{\underline{x}\}$ such that $r=r_0-bt \in (Jac(f_t+x_n^k) \colon \langle \partial_n(f_t) \rangle )$.
     Then $\partial_n(f_t) \cdot (r_0-bt) \in Jac(f_t+x_n^k)$, and so there exists some $c_1, \dots, c_n \in R$ such that $\partial_n(f_t) \cdot (r_0-bt) =\sum_{i=1}^n c_i \partial_i(f_t) + c_n kx_n^{k-1}$. Therefore, if we write $c_i = a_i +\tilde{c}_i t$ where $a_1, \dots, a_n \in \mathbb{C}\{\underline{x}\}$ and $\tilde{c}_1, \dots, \tilde{c}_n \in R$, then there exists some $\tilde{b} \in R$ such that $\partial_n(f) \cdot r_0-\tilde{b}t =\sum_{i=1}^n a_i \partial_i(f) + c_n kx_n^{k-1}$. By comparing coefficients we have that $\partial_n(f) r_0 \in Jac(f+x_n^k)$. So $r_0 \in (Jac(f+x_n^k) \colon \langle \partial_n(f) \rangle )$ and we get that $r \in (Jac(f+x_n^k) \colon \langle \partial_n(f) \rangle) \cdot R +\langle t \rangle$. The other inclusion follows from stability of colon ideal under such a base change (see similarity with Exercise 15.41 in~\cite{eisenbud2013commutative}). 
\end{proof}

\begin{lemma}\label{lemma:delta_zero}
    $\delta(f) =0$ if and only if $\partial_n(f) \in J_{n-1}(f)$. 
\end{lemma}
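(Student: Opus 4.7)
The plan is to combine the explicit formula from Proposition~\ref{prop:iso_sing_f+x_n^k}(3) with a short Nakayama-style argument, reducing the statement to a question about membership inside the Jacobian ideal.

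By definition, $\delta(f) = \dim_\mathbb{C}(\sfrac{Jac(f)}{Jac_{gen}(f) \cap Jac(f+x_n^k)})$ for $k$ large enough, and by Proposition~\ref{prop:iso_sing_f+x_n^k}(3) the denominator equals $J_{n-1}(f) + \langle \partial_n(f) \rangle \cdot Jac_{gen}(f)$. Since $J_{n-1}(f) + \langle \partial_n(f) \rangle \cdot Jac_{gen}(f) \subseteq J_{n-1}(f) + \langle \partial_n(f)\rangle = Jac(f)$, the vanishing $\delta(f) = 0$ is equivalent to the equality
\begin{equation*}
    Jac(f) = J_{n-1}(f) + \langle \partial_n(f) \rangle \cdot Jac_{gen}(f),
\end{equation*}
which in turn is equivalent to the single membership $\partial_n(f) \in J_{n-1}(f) + \langle \partial_n(f) \rangle \cdot Jac_{gen}(f)$, since everything else in $Jac(f)$ already lies in $J_{n-1}(f)$.

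For the forward direction, assume $\delta(f) = 0$. Then we may write $\partial_n(f) = g + h \cdot \partial_n(f)$ for some $g \in J_{n-1}(f)$ and some $h \in Jac_{gen}(f)$. The key observation is that $Jac_{gen}(f) \subseteq I = \langle x_1, \dots, x_{n-1} \rangle \subseteq \mathfrak{m}$, since $\sqrt{Jac_{gen}(f)} = I$ (as $V(Jac_{gen}(f)) = V(Jac(f)) = V(I)$ set-theoretically). Therefore $h \in \mathfrak{m}$, so $1-h$ is a unit in $\mathbb{C}\{\underline{x}\}$, which gives $\partial_n(f) = (1-h)^{-1} g \in J_{n-1}(f)$, as desired.

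For the converse, if $\partial_n(f) \in J_{n-1}(f)$ then $Jac(f) = J_{n-1}(f)$, and one sees directly that $J_{n-1}(f) \subseteq Jac_{gen}(f) \cap Jac(f+x_n^k)$ (using $J_{n-1}(f) \subseteq Jac_{gen}(f)$ from $Jac(f) \subseteq Jac_{gen}(f)$ and $J_{n-1}(f) \subseteq Jac(f+x_n^k)$ which is immediate from the definition of $Jac(f+x_n^k)$). Combined with the reverse inclusion noted above, one gets $Jac(f) = Jac_{gen}(f) \cap Jac(f+x_n^k)$ and hence $\delta(f) = 0$. The only subtlety is the Nakayama-type step in the forward direction, which hinges on the containment $Jac_{gen}(f) \subseteq \mathfrak{m}$; everything else is bookkeeping using the already established isomorphism.
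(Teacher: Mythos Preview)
Your argument is correct and lands on the same decisive step as the paper: writing $\partial_n(f)=g+h\,\partial_n(f)$ with $g\in J_{n-1}(f)$, $h\in Jac_{gen}(f)\subseteq I\subseteq\mathfrak{m}$, and inverting $1-h$. The converse is also handled the same way in both proofs, via Proposition~\ref{prop:iso_sing_f+x_n^k}(3).

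The difference is only in how you reach the membership $\partial_n(f)\in J_{n-1}(f)+\langle\partial_n(f)\rangle\cdot Jac_{gen}(f)$. You go there directly, reading $\delta(f)$ as $\dim_\mathbb{C}\bigl(Jac(f)/(J_{n-1}(f)+\langle\partial_n(f)\rangle\cdot Jac_{gen}(f))\bigr)$ via Proposition~\ref{prop:iso_sing_f+x_n^k}(3). The paper instead first invokes Proposition~\ref{prop:delta_colon} to translate $\delta(f)=0$ into $\partial_n(f)\in Jac(f+x_n^k)$, then works with an explicit expression $\partial_n(f)=\sum c_i\,\partial_i(f)+c_nkx_n^{k-1}$ to check that $c_n\in I$ before finally applying Proposition~\ref{prop:iso_sing_f+x_n^k}(3) and the same unit trick. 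Your route is shorter and avoids the intermediate computation with the $c_i$ and the $x_n^{k-1}$ term; the paper's route has the minor advantage of making the role of the colon-ideal description of $\delta(f)$ visible, but that description is not actually needed for the lemma.
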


\begin{proof}
    From Proposition~\ref{prop:delta_colon}, if $\delta(f) =0$ then $(Jac(f+x_n^k) \colon \langle \partial_n(f) \rangle)=\mathbb{C}\{\underline{x}\}$, which is equivalent to $\partial_n(f) \in Jac(f+x_n^k)$. Therefore there exists some $c_1, \dots, c_n \in \mathbb{C}\{\underline{x}\}$ such that $\partial_n(f) = \sum_{i=1}^n c_i \partial_i (f) +c_n k x_n^{k-1}$. Therefore $c_n x_n^k \in Jac(f)$, and since $\sqrt{Jac(f)} =I$ we have that $c_n  \in I$. Yet, we have that $(1-c_n) \partial_n(f) = \sum_{i=1}^{n-1} c_i \partial_i(f) + c_n k x_n^{k-1}$, and since $1-c_n$ is invertible we can conclude that $\partial_n(f) \in J_{n-1}(f) +\langle x_n^{k-1}\rangle$. Hence $Jac_{gen}(f+x_n^{k}) = Jac(f) +\langle x_n^{k-1} \rangle = J_{n-1}(f) +\langle x_n^{k-1} \rangle$, and in particular $\partial_n(f) \in Jac(f+x_n^k)$. 
    By applying Proposition~\ref{prop:iso_sing_f+x_n^k} we can conclude that $\partial_n(f) \in   J_{n-1}(f) + \langle \partial_n(f) \rangle \cdot Jac_{gen}(f)$. Therefore there exists some $g \in Jac_{gen}(f)$ such that $\partial_n(f) + g\cdot \partial_n(f) \in J_{n-1}(f)$, but since $1+g$ is invertible we can conclude that $\partial_n(f) \in J_{n-1}(f)$ as desired. On the other hand, if $\partial_n(f) \in J_{n-1}(f)$ then from Proposition~\ref{prop:iso_sing_f+x_n^k} we get that $Jac_{gen}(f) \cap Jac(f+x_n^k) = J_{n-1}(f) = Jac(f)$, and so $\delta(f)=0$. 
\end{proof}

\begin{remark}\label{remark:delta_zero_trans}
    \textup{From Lemma~\ref{lemma:delta_zero} we can conclude that if $\delta(f)=0$ then  $\Delta^\perp(f)=\emptyset$. This is true since $\partial_n(f) \in J_{n-1}(f)$ is true if and only if up to a coordinate change, $f$ is a function of $x_1, \dots, x_{n-1}$ (for more details see Proposition 1.11 in~\cite{pellikaan1989series} and Section 9 of \cite{pellikaanhypersurface}), and we can combine this result with Remark~\ref{rem:top_trans_disc}.  }
\end{remark}

\begin{theorem}\label{thm:delta_trans_bound}
$\delta(f) \geq \deg(\Delta^\perp(f))$. 
\end{theorem}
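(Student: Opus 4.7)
The plan is to deform $f$ to a relative morsification $f_t$ via Theorem~\ref{prop:morsif}, use the Cohen--Macaulay module provided by Lemma~\ref{lemma:CM_delta}, and then apply Proposition~\ref{prop:bigbigbig} to conserve length from the fiber at $t=0$ (which has complex dimension $\delta(f)$) out to the fiber at a nearby $t_0$, whose support will be forced to detect each of the $\deg(\Delta^\perp(f))$ many points of the reduced $\Delta^\perp(f_{t_0})$.

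Concretely, I would first set $N_t = \sfrac{R}{(Jac(f_t+x_n^k):\langle\partial_n(f_t)\rangle)}$. By Lemma~\ref{lemma:CM_delta}, for $k$ large enough $N_t$ is a Cohen--Macaulay $R$-module of dimension $1$, and its fiber at $t=0$ is $\mathbb{C}\{\underline{x}\}/(Jac(f+x_n^k):\langle\partial_n(f)\rangle)$, which has complex dimension exactly $\delta(f)$ by Proposition~\ref{prop:delta_colon} together with the definition of $\delta(f)$. Proposition~\ref{prop:bigbigbig} then yields, for all sufficiently small $t_0$,
\[
\delta(f) \;=\; \sum_{p \in \pi^{-1}(t_0)} \dim_\mathbb{C}\bigl((N_t|_{\pi^{-1}(t_0)})_p\bigr),
\]
while Remark~\ref{prop:disc_flat_defor} together with the relative morsification property of $f_t$ guarantees that $\Delta^\perp(f_{t_0})$ is a reduced zero-dimensional subscheme of $V(I)$ of cardinality exactly $\deg(\Delta^\perp(f))$.

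The remaining step is to show that each point of (or, more precisely, the cluster of critical points of $f_{t_0}+x_n^k$ associated to each point of) $\Delta^\perp(f_{t_0})$ contributes at least $1$ to the sum above. To do this I would localize the whole story at a point $p\in\Delta^\perp(f_{t_0})$: the germ of $f_{t_0}$ at $p$ satisfies the standing hypotheses of the paper and has a nonempty local transversal discriminant at $p$. The contrapositive of Lemma~\ref{lemma:j=0_and_disc}(2), combined with a local form of Proposition~\ref{prop:delta_colon}, forces the local contribution at $p$ to be nonzero, hence at least $1$. Summing over the $\deg(\Delta^\perp(f))$ points of $\Delta^\perp(f_{t_0})$ then gives the claimed inequality.

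The hard part will be this last, genuinely local step. The complication is that a point $p \in \Delta^\perp(f_{t_0})$ lies on $V(I)$ with $p_n \neq 0$, so the term $k x_n^{k-1}$ may be a unit near $p$, which can make $p$ itself a regular point of $f_{t_0}+x_n^k$; in that case the local contribution to $N_t|_{\pi^{-1}(t_0)}$ is carried instead by a nearby cluster of critical points of $f_{t_0}+x_n^k$ that collapse onto $p$ as the $x_n^k$ term is turned off. The crux is therefore a careful flatness/base-change argument, analogous to the one used in the proof of Lemma~\ref{lemma:CM_delta}, that tracks where the local length of $N_t$ goes and shows that each such cluster carries at least one unit of length attributable to its $\Delta^\perp$-point.
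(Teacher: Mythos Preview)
Your strategy matches the paper's almost exactly: it too chooses a relative morsification via Theorem~\ref{prop:morsif}, invokes Lemma~\ref{lemma:CM_delta} to obtain a one-dimensional Cohen--Macaulay $R$-module whose fiber at $t=0$ has length $\delta(f)$ (via Proposition~\ref{prop:delta_colon}), and then applies Proposition~\ref{prop:bigbigbig} to conserve length out to small $t_0$, where $\Delta^\perp(f_{t_0})$ is reduced of cardinality $\deg(\Delta^\perp(f))$ by Remark~\ref{prop:disc_flat_defor}. The one correction is that for the local-positivity step the paper uses Remark~\ref{remark:delta_zero_trans} (built on Lemma~\ref{lemma:delta_zero}), which is precisely the $\delta$-statement ``$\delta=0\Rightarrow\Delta^\perp=\emptyset$''; your citation of Lemma~\ref{lemma:j=0_and_disc}(2) concerns $j_0$, the wrong invariant here, so swap that reference.

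The concern you raise in your final paragraph---that $p\in\Delta^\perp(f_{t_0})$ has $p_n\neq 0$, so $kx_n^{k-1}$ is a unit near $p$ and the colon module from Lemma~\ref{lemma:CM_delta} literally localizes to zero there---is a genuine subtlety, and the paper does not carry out the cluster/base-change argument you sketch. It simply asserts that the support of the fiber is finite, lies in $V(I)$, and contains $\Delta^\perp(f_{t_0})$, justifying this by (i) Remark~\ref{remark:delta_zero_trans} at points of $\Delta^\perp(f_{t_0})$ and (ii) the observation that $\delta$ vanishes for smooth or Morse germs (so Morse points of $f_{t_0}$ outside $V(I)$ contribute nothing), and then refers back to ``the same technique \ldots used in the proof of Theorem~\ref{thm:main}''. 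In effect the paper identifies the stalk of the fiber at $p$ with the local $\delta$ of the germ $(f_{t_0},p)$ without dwelling on the $x_n^k$ shift. You have correctly located where the argument is most delicate; to reproduce the paper's proof, present it in this compressed form, cite Remark~\ref{remark:delta_zero_trans}, and note the vanishing at smooth and Morse points, rather than attempting the finer two-parameter analysis.
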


\begin{proof}
Let $f_t$ be a relative morsification of $f$, which exists from Theorem~\ref{prop:morsif}. Then by Proposition~\ref{lemma:CM_delta} we have that $\sfrac{R}{(Jac(f_t+x_n^k) \colon \langle \partial_n(f_t) \rangle )}$ is a Cohen-Macaulay module of dimension $1$ for a large enough $k$, and from Remark~\ref{remark:delta_zero_trans} we have that if $\Delta^\perp(f) \neq \emptyset$ then $\delta(f) >0$. Observe that in addition, if $f$ is either smooth or a Morse function, then $\delta(f)=0$. Therefore, if we look at the corresponding sheaf, we get that the support of the corresponding fiber is finite, is contained in $V(I)$, and contains the corresponding transversal discriminant. Therefore the result follows by applying the same technique based upon Proposition~\ref{prop:bigbigbig} which we used in the proof of Theorem~\ref{thm:main}. 
\end{proof}

\begin{corollary}\label{cor:mu_rel_mors}
     Assume that $n\neq 4$. Then for any relative morsification $f_t$ of $f$ we have that $\mu(f+x_n^k) \geq \#A_1(f_{{t_0}}) + (k-1)(p-1)^{n-1} + 2\deg(\Delta^\perp(f))$ for every large enough $k$ and for every small enough $t_0$. 
\end{corollary}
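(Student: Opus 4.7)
The corollary should fall out almost immediately once the three main results of the paper are in hand, so my plan is simply to combine them in the right order. The starting point is the explicit formula from Proposition~\ref{prop:iso_sing_f+x_n^k}(2), which (for $k$ large enough so that $f+x_n^k$ has an isolated singularity) gives
\begin{equation*}
    \mu(f+x_n^k)=j(f)+(k-1)(p-1)^{n-1}+\delta(f),
\end{equation*}
where $\delta(f)=\dim_\mathbb{C}(\sfrac{Jac(f)}{Jac_{gen}(f)\cap Jac(f+x_n^k)})$ and the fact that this definition is independent of $k$ was already noted in Section~\ref{sec:milnor}. So the entire task is to bound the two non-linear contributions $j(f)$ and $\delta(f)$ from below.

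For the Jacobi number $j(f)$, I would invoke Theorem~\ref{thm:main_Jac_gen} directly: for the given relative morsification $f_t$ (which exists by Theorem~\ref{prop:morsif}) and for every small enough $t_0$,
\begin{equation*}
    j(f)\ \geq\ \#A_1(f_{t_0})+\deg(\Delta^\perp(f)).
\end{equation*}
For $\delta(f)$, I would invoke Theorem~\ref{thm:delta_trans_bound} to get $\delta(f)\geq \deg(\Delta^\perp(f))$. Adding the two inequalities and using the formula above yields
\begin{equation*}
    \mu(f+x_n^k)\ \geq\ \#A_1(f_{t_0})+(k-1)(p-1)^{n-1}+2\deg(\Delta^\perp(f)),
\end{equation*}
which is exactly the claim.

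The hypothesis $n\neq 4$ is needed only because it is carried along from Theorem~\ref{thm:main_Jac_gen}, whose proof passes through Proposition~\ref{prop:trans_disc_LR} and hence through the L\^{e}--Ramanujan theorem (open in dimension three, i.e.\ $n-1=3$). Theorem~\ref{thm:delta_trans_bound} itself does not add any extra restriction, so there is no additional case analysis. In short, there is no real obstacle here: the work has all been done in the preceding theorems, and the corollary is a clean two-line arithmetic assembly of the Milnor-number formula with the two lower bounds $j(f)\geq \#A_1(f_{t_0})+\deg(\Delta^\perp(f))$ and $\delta(f)\geq \deg(\Delta^\perp(f))$, each valid under the standing hypotheses on $f$ and $f_t$ and for $k$ and $t_0$ chosen in the appropriate ranges.
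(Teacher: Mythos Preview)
Your proposal is correct and matches the paper's own proof, which simply reads ``Follows from Theorem~\ref{thm:main_Jac_gen} and Theorem~\ref{thm:delta_trans_bound}.'' You have made explicit the underlying use of the Milnor-number formula from Proposition~\ref{prop:iso_sing_f+x_n^k}(2) that the paper leaves implicit, but the logical route is identical.
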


\begin{proof}
    Follows from Theorem~\ref{thm:main_Jac_gen} and Theorem~\ref{thm:delta_trans_bound}. 
\end{proof}

As with Theorem~\ref{thm:main_Jac_gen}, the inequality in Theorem~\ref{thm:delta_trans_bound} need not be an equality, as we see in the following example. 

\begin{example}\label{example:beansbeans}
    \textup{As in Example~\ref{example:inequality}, we have that for $f(x,y,z)=2k(x^{3k} +y^{3k}) -3k z^2 x^{2k} y^{2k}$ with $k \geq 2$, $\Delta^\perp(f) = \emptyset$ but $\partial_n(f) \notin Jac(f+x_n^k)$, and so from Proposition~\ref{prop:delta_colon} we get $\delta(f) >0$. }
\end{example}

We end this section with a proposition and a corollary which examine the relation between a relative morsification $f_t$ of $f$ and the deformation $f_t+x_n^k$ of $f+x_n^k$ (for a large enough $k$). 

\begin{proposition}\label{prop:rel_mors_mu_A_1}
Let $f_t$ be a relative morsification of $f$. Then for large enough $k \in \mathbb{N}$ and for every small enough $t_0$ we have that $\mu(f+x_n^k) = \# A_1(f_{t_0}) + \mu(f_{t_0}+x_n^k)$. 
\end{proposition}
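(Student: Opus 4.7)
My plan is to view $F_t(\underline{x}) := f_t(\underline{x}) + x_n^k$ as a one-parameter deformation of $F_0 = f + x_n^k$ and apply the conservation of the Milnor number. Since $f_{t_0}$ again lies in $I^p \setminus I^{p+1}$ with $\Sing(V(f_{t_0})) = V(I)$ (by the definition of a relative morsification), Proposition~\ref{prop:iso_sing_f+x_n^k} applies uniformly, so for large enough $k$ both $F_0$ and $F_{t_0}$ have isolated singularities at $\underline{0}$, with Milnor numbers $\mu(f + x_n^k)$ and $\mu(f_{t_0} + x_n^k)$ respectively.

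Next I would determine the critical locus of $F_{t_0}$ in a fixed small Milnor ball about the origin. The critical equations are $\partial_i f_{t_0} = 0$ for $i < n$ together with $\partial_n f_{t_0} + k x_n^{k-1} = 0$. Because $f_{t_0} \in I^p$ with $p \geq 2$, every $\partial_i f_{t_0}$ vanishes on $V(I)$, so on $V(I)$ the last equation reduces to $x_n = 0$, leaving $\underline{0}$ as the unique critical point of $F_{t_0}$ along $V(I)$. Off $V(I)$, the only critical points of $f_{t_0}$ itself are the $s := \#A_1(f_{t_0})$ Morse points $q_1, \dots, q_s$, all of which converge to $\underline{0}$ as $t_0 \to 0$. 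Near each $q_i$ one may place $f_{t_0}$ in Morse normal form; then $x_n^k$ has $C^2$-norm of order $(|(q_i)_n| + \varepsilon)^k$ on an $\varepsilon$-neighborhood of $q_i$, which is arbitrarily small after first shrinking $t_0$ and then enlarging $k$. Hence, by the stability of nondegenerate critical points under small perturbations, $F_{t_0}$ has a unique Morse critical point $\tilde{q}_i$ near each $q_i$, each contributing Milnor number $1$.

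The final step is to apply the conservation of the Milnor number for isolated hypersurface singularities in a one-parameter family (Section 2.6 in~\cite{greuel2007introduction} or Proposition 3.19 in~\cite{ebeling2007functions}): since $F_0$ has an isolated singularity and the critical locus of $F_{t_0}$ in our fixed Milnor ball consists precisely of the isolated points $\underline{0}, \tilde{q}_1, \dots, \tilde{q}_s$, we conclude
\begin{equation*}
    \mu(f + x_n^k) \;=\; \mu(F_{t_0}, \underline{0}) + \sum_{i=1}^s \mu(F_{t_0}, \tilde{q}_i) \;=\; \mu(f_{t_0} + x_n^k) + \#A_1(f_{t_0}).
\end{equation*}

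The main obstacle will be ruling out ``spurious'' critical points of $F_{t_0}$ other than $\underline{0}$ and the $\tilde{q}_i$. The first $n-1$ critical equations cut out a one-dimensional scheme (supporting the Cohen-Macaulay ring $\mathbb{C}\{\underline{x}\}/J_{n-1}(f_{t_0})$ from Proposition~\ref{prop:iso_sing_f+x_n^k}(1)) that contains $V(I)$ and the $q_i$, and any additional critical point of $F_{t_0}$ would have to lie on some other branch of this scheme. I would handle this by arguing that every critical point of $F_{t_0}$ in the Milnor ball must specialize, as $t_0 \to 0$, either to $\underline{0}$ or to one of the $q_i$: otherwise it would produce a limit critical point of $F_0$ off the origin, contradicting the isolatedness of the singularity of $f + x_n^k$ guaranteed by Proposition~\ref{prop:iso_sing_f+x_n^k}.
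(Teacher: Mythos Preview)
Your approach matches the paper's: both treat $F_t = f_t + x_n^k$ as a deformation of the isolated singularity $f + x_n^k$ and invoke conservation (semi-continuity) of the Milnor number, with the origin contributing $\mu(f_{t_0}+x_n^k)$ and the Morse points off $V(I)$ contributing $\#A_1(f_{t_0})$. You are in fact more careful than the paper in distinguishing the $q_i$ from the perturbed $\tilde{q}_i$; the paper asserts that the $A_1$ points of $f_{t_0}$ and of $f_{t_0}+x_n^k$ literally coincide (via the $\Hess(f_{t_0}+x_n^k)(z_0)=\Hess(f_{t_0})(z_0)+\Hess(x_n^k)(z_0)$ computation and openness of $GL_n$), which is only true up to the small shift you describe.

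One correction to your final paragraph: the specialization argument as written cannot separate ``good'' from ``spurious'' critical points, because as $t_0 \to 0$ \emph{every} critical point of $F_{t_0}$ in the Milnor ball, including each $\tilde{q}_i$, specializes to $\underline{0}$ (the $q_i$ are not fixed points---they depend on $t_0$ and themselves go to the origin). The clean way to exclude extra critical points is the $C^2$-perturbation argument you already invoked to produce the $\tilde{q}_i$: for $t_0$ small and $k$ large, $x_n^k$ has arbitrarily small $C^1$-norm on the Milnor ball, so on any compact subset of $B \setminus \big(V(I) \cup \bigcup_i B_\varepsilon(q_i)\big)$ the gradient $\nabla F_{t_0}$ remains bounded away from zero because $\nabla f_{t_0}$ is. This two-sided perturbation statement is exactly what the paper's terse ``and vice versa'' is encoding.
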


\begin{proof}
By Proposition~\ref{prop:iso_sing_f+x_n^k}, $f+x_n^k$ is an isolated singularity for a large enough $k$, and so $f_{t_0}+x_n^k$ is an isolated singualrity for every small enough $t_0$ as well. Now, for every $A_1$ point $z_0$ of $f_{t_0}$ we have that $\Hess(f_{t_0})(z_0) \in GL_n(\mathbb{C})$, and we have that $\Hess(f_{t_0}+x_n^k)(z_0)=\Hess(f_{t_0})(z_0)+\Hess(x_n^k)(z_0)$. Yet, $GL_n(\mathbb{C}) \subset \mathbb{C}^{n \times n}$ is an open set (with respect to the Euclidean topology), and $f_{t_0}$ has only a finite number of $A_1$ critical points (as mentioned in Remark~\ref{rem:34}). Therefore there exists some $K$ such that for every $k \geq K$ we have that $\Hess(f+x_n^k)(z) \in GL_n(\mathbb{C})$ for every $A_1$ point $z$ of $f_{t_0}$. Hence we can conclude that every $A_1$ point of $f_{t_0}$ is an $A_1$ point of $f_{t_0}+x_n^k$, and vice versa. Thus the result follows semi-continuity of the Milnor number (See Theorem 2.6 in~\cite{greuel2007introduction}). 
\end{proof}

\begin{corollary}
Let $f_t$ be a relative morsification and assume that $n \neq 4$. Then $\mu(f_{t_0}+x_n^k) \geq (k-1)(p-1)^{n-1} + 2\deg(\Delta^\perp(f))$ for every large enough $k$ and for every small enough $t_0$.    
\end{corollary}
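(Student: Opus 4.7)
The plan is essentially to combine the two results that immediately precede the statement. By Proposition~\ref{prop:rel_mors_mu_A_1}, for every large enough $k$ and every small enough $t_0$ we have the identity
\begin{equation*}
    \mu(f+x_n^k) = \# A_1(f_{t_0}) + \mu(f_{t_0}+x_n^k),
\end{equation*}
so the quantity we want to bound from below, namely $\mu(f_{t_0}+x_n^k)$, is exactly $\mu(f+x_n^k) - \# A_1(f_{t_0})$.

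Next I would invoke Corollary~\ref{cor:mu_rel_mors}, which applies precisely under the assumption $n \neq 4$ and gives
\begin{equation*}
    \mu(f+x_n^k) \geq \#A_1(f_{t_0}) + (k-1)(p-1)^{n-1} + 2\deg(\Delta^\perp(f))
\end{equation*}
for the same range of $k$ and $t_0$. Subtracting $\# A_1(f_{t_0})$ from both sides and substituting the identity from Proposition~\ref{prop:rel_mors_mu_A_1} yields the desired inequality.

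There is really no main obstacle here beyond checking that the two statements apply to the same regime of $k$ and $t_0$: Proposition~\ref{prop:rel_mors_mu_A_1} asks for $k$ large (so that $f+x_n^k$ and the corresponding $f_{t_0}+x_n^k$ both have isolated singularities with the same $A_1$ points) and $t_0$ small, while Corollary~\ref{cor:mu_rel_mors} requires $n \neq 4$, $k$ large, and $t_0$ small. Intersecting these conditions is straightforward, so no extra argument is required. The proof is a one-line consequence of the two previously established facts.
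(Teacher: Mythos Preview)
Your proposal is correct and follows exactly the paper's approach: the paper's proof simply states that the corollary follows directly from Corollary~\ref{cor:mu_rel_mors} and Proposition~\ref{prop:rel_mors_mu_A_1}, which is precisely the subtraction argument you wrote out.
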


\begin{proof}
    Follows directly from Corollary~\ref{cor:mu_rel_mors} and Proposition~\ref{prop:rel_mors_mu_A_1}. 
\end{proof}

\section{Closing Remarks and Questions}\label{section:closing}

\textup{In the case where $\Sing(V(f))$ is a reduced scheme, that is, if $p=2$, we can write $f=\sum_{i,j < n } a_{i,j} x_i x_j$ where $a_{i,j} \in \mathbb{C}\{ \underline{x}\}$ with $a_{i,j}=a_{j,i}$ for every $i,j$. Then, as explained in Example 4.2 in~\cite{kerner2017discriminant}, we have that}
\begin{equation*}
    \Delta ^\perp (f)= \{\det(\{a_{i,j}|_{V(I)}\}_{i,j})=0\}.
\end{equation*}
\textup{Therefore, if the transversal discriminant of $V(f)$ is reduced, we have that $\Delta^\perp(f)$ is exactly the set of $D_\infty$ type points, and the degree of $\Delta^\perp(f)$ (as a Cartier divisor) is exactly the number of such $D_\infty$ points. In addition, if $\Delta^\perp(f) = \emptyset$ then $f$ has only $A_\infty$ points on its singular locus.  Therefore, studying the transversal discriminant generalizes the study of $D_\infty$ points in~\cite{siersma1983isolated,de1988some, pellikaan1990deformations}.} \\

Thus, in the reduced case we have that $Jac_{gen}(f)=I$. So, for $f$ with $\Sing(V(f))=V(I)$ such that its generic transversal type is an ordinary multiple point, we can conclude that:  
\begin{enumerate}
    \item $j(f)=0$ if and only if $\delta(f)=0$ if and only if $f$ is a $A_\infty$ singularity
    \item $j(f)=1$ if and only if $\delta(f)=1$ if and only if $f$ is a $D_\infty$ singularity. 
\end{enumerate}

This reproves Lemma 1.10 and Remark 2.16 in~\cite{pellikaan1990deformations}.  \\

In addition, if we look back at the definition of a relative morsification as in Definition~\ref{def:morsification}, then the remark above, in addition to Theorem~\ref{prop:morsif}, reproves the existence of a relative morsification in the case $p=2$, as presented in~\cite{siersma1983isolated, pellikaan1990deformations}. Therefore, if we reconsider  the proof of Theorem~\ref{thm:main_Jac_gen} (based upon Theorem~\ref{thm:main}) and the proof of Theorem~\ref{thm:delta_trans_bound}, we can conclude that we in fact have equality in the reduced case ($p=2$), that is, $j(f) =\# A_1(f_t) + \deg(\Delta^\perp(f))$ and $\delta(f) = \deg(\Delta^\perp(f))$. This reproves Proposition 2.19 in~\cite{pellikaan1990deformations} and Theorem 4.2 in~\cite{pellikaan1989series}, respectively. \\

Yet, as we have seen in Example~\ref{example:inequality} and in Example~\ref{example:beansbeans}, in general we do not have equalities in Theorem~\ref{thm:main_Jac_gen} and in Theorem~\ref{thm:delta_trans_bound}. One can note that in this case we only have an inequality since the transversal discriminant only measures the topological equaisingularity of the transversal sections of $V(f)$ (as seen in Remark~\ref{rem:top_trans_disc}), but not the analytical equisingularity. Therefore, it would be interesting to find an alternative definition for the transversal discriminant than the one presented in~\cite{kazarian2013discriminant, kerner2017discriminant}, which would be empty if and only if the transversal sections of $V(f)$ along $V(I)$ are analytically equisingular.

\bibliographystyle{plain}
\bibliography{bib}

\begin{thebibliography}{10}

\bibitem{arnol1993singularity}
V.I. Arnold, V.V. Goryunov, O.V. Lyashko, and V.A. Vasil’ev.
\newblock Singularity theory.
\newblock In {\em Dynamical systems VIII}, pages 1--235. Springer, 1993.

\bibitem{artin1972residual}
M.~Artin and M.~Nagata.
\newblock Residual intersections in cohen-macauley rings.
\newblock {\em Journal of Mathematics of Kyoto University}, 12(2):307--323,
  1972.

\bibitem{boix2018pairs}
Alberto~F Boix, Gert-Martin Greuel, and Dmitry Kerner.
\newblock Pairs of lie-type and large orbits of group actions on filtered
  modules.(a characteristic-free approach to finite determinacy.).
\newblock {\em arXiv preprint arXiv:1808.06185}, 2018.

\bibitem{bruns1998cohen}
Winfried Bruns and H~J{\"u}rgen Herzog.
\newblock {\em Cohen-macaulay rings}.
\newblock Cambridge university press, 1998.

\bibitem{de2004relative}
Javier~Fern{\'a}ndez De~Bobadilla.
\newblock Relative morsification theory.
\newblock {\em Topology}, 43(4):925--982, 2004.

\bibitem{de1988some}
Theo de~Jong.
\newblock Some classes of line singularities.
\newblock {\em Mathematische Zeitschrift}, 198(4):493--517, 1988.

\bibitem{de2013local}
Theo de~Jong and Gerhard Pfister.
\newblock {\em Local analytic geometry: Basic theory and applications}.
\newblock Springer Science \& Business Media, 2013.

\bibitem{dos2018singularities}
Raimundo Nonato~Ara{\'u}jo dos Santos, Aur{\'e}lio~Menegon Neto, David Mond,
  Marcelo~J Saia, and Jawad Snoussi.
\newblock {\em Singularities and Foliations. Geometry, Topology and
  Applications: BMMS 2/NBMS 3, Salvador, Brazil, 2015}, volume 222.
\newblock Springer, 2018.

\bibitem{ebeling2007functions}
Wolfgang Ebeling.
\newblock {\em Functions of several complex variables and their singularities},
  volume~83.
\newblock American Mathematical Soc., 2007.

\bibitem{eisenbud2013commutative}
David Eisenbud.
\newblock {\em Commutative Algebra: with a view toward algebraic geometry},
  volume 150.
\newblock Springer Science \& Business Media, 2013.

\bibitem{gelfand2008discriminants}
I~Gelfand, M~Kapranov, and Andrey~V Zelevinsky.
\newblock Discriminants, resultants and multidimensional determinants. reprint
  of the 1994 edition. modern birkh{\"a}user classics, 2008.

\bibitem{greuel2007introduction}
Gert-Martin Greuel, Christoph Lossen, and Eugenii~I Shustin.
\newblock {\em Introduction to singularities and deformations}.
\newblock Springer Science \& Business Media, 2007.

\bibitem{harris2013algebraic}
Joe Harris.
\newblock {\em Algebraic geometry: a first course}, volume 133.
\newblock Springer Science \& Business Media, 2013.

\bibitem{huneke1983strongly}
C.~Huneke.
\newblock Strongly cohen-macaulay schemes and residual intersections.
\newblock {\em Transactions of the American Mathematical Society},
  277(2):739--763, 1983.

\bibitem{kazarian2013discriminant}
Maxim Kazarian, Dmitry Kerner, and Andr{\'a}s N{\'e}methi.
\newblock Discriminant of the ordinary transversal singularity type. the global
  equivalence class.
\newblock {\em arXiv preprint arXiv:1308.6045}, 2013.

\bibitem{kerner2017discriminant}
Dmitry Kerner and Andr{\'a}s N{\'e}methi.
\newblock Discriminant of the ordinary transversal singularity type. the local
  aspects.
\newblock {\em arXiv preprint arXiv:1705.11013}, 2017.

\bibitem{le1980}
D.T. L{\^e}.
\newblock Ensembles analytiques complexes avec lieu singulier de dimension un
  (d'apres iomdine).
\newblock {\em Seminar on Singularities, Publ. Math. Univ. Paris VII},
  7:87--95, 1980.

\bibitem{pellikaanhypersurface}
Ruud Pellikaan.
\newblock Hypersurface singularities and resolutions of jacobi modules.
\newblock {\em Drukkerij Elinkwijk BV, Utrecht}, 1985.

\bibitem{pellikaan1988finite}
Ruud Pellikaan.
\newblock Finite determinacy of functions with non-isolated singularities.
\newblock In {\em Proc. London Math. Soc.(3)}, volume~57, pages 357--382, 1988.

\bibitem{pellikaan1988projective}
Ruud Pellikaan.
\newblock Projective resolutions of the quotient of two ideals.
\newblock In {\em Indagationes Mathematicae (Proceedings)}, volume~91, pages
  65--84. North-Holland, 1988.

\bibitem{pellikaan1989series}
Ruud Pellikaan.
\newblock Series of isolated singularities.
\newblock {\em Contemp. Math}, 90:241--259, 1989.

\bibitem{pellikaan1990deformations}
Ruud Pellikaan.
\newblock Deformations of hypersurfaces with a one-dimensional singular locus.
\newblock {\em Journal of Pure and Applied Algebra}, 67(1):49--71, 1990.

\bibitem{siersma1983isolated}
D.~Siersma.
\newblock Isolated line singularities.
\newblock {\em Proceedings of Symposia in Pure Mathematics}, 40(2):485--496,
  1980.

\bibitem{trang1976invariance}
L{\^e}~D{\~u}ng Tr{\'a}ng and Chidambaram~P Ramanujam.
\newblock The invariance of milnor's number implies the invariance of the
  topological type.
\newblock {\em American Journal of Mathematics}, 98(1):67--78, 1976.

\bibitem{van2015gorenstein}
D.~Van~Straten and Thorsten Warmt.
\newblock Gorenstein-duality for one-dimensional almost complete
  intersections--with an application to non-isolated real singularities.
\newblock In {\em Mathematical Proceedings of the Cambridge Philosophical
  Society}, volume 158, pages 249--268. Cambridge University Press, 2015.

\bibitem{iomdin1974complex}
IN~Yomdin.
\newblock Complex surfaces with a one-dimensional set of singularities.
\newblock {\em Siberian Mathematical Journal}, 15(5):748--762, 1974.

\end{thebibliography}

\end{document}